\author{Stephen Bedford}
\title{Function spaces for liquid crystals}
\theoremstyle{plain}
\newtheorem{theorem}{Theorem}[section]
\newtheorem{proposition}[theorem]{Proposition}
\newtheorem{definition}[theorem]{Definition}
\newtheorem{lemma}[theorem]{Lemma}
\newtheorem{corollary}[theorem]{Corollary}
\newtheorem{remark}[theorem]{Remark}
\newtheorem*{theorem*}{Theorem}
\newtheorem*{proposition*}{Proposition}
\newtheorem*{definition*}{Definition}
\newtheorem*{lemma*}{Lemma}
\newtheorem*{corollary*}{Corollary}
\newtheorem*{remark*}{Remark}
\newcommand{\tostar}{\stackrel{\ast}{\rightharpoonup}}
\newcommand{\vn}{{\bf n}}
\newcommand{\vv}{{\bf v}}
\newcommand{\vm}{{\bf m}}
\newcommand{\vu}{{\bf u}}
\newcommand{\vphi}{\boldsymbol\phi}
\newcommand{\vpsi}{\boldsymbol\psi}
\newcommand{\vQ}{{\bf Q}}
\renewenvironment{proof}{{\bf{Proof}\vspace{3 mm}}}{\qed}
\numberwithin{equation}{section}
\begin{document}

\maketitle
\mathtoolsset{showonlyrefs=true}

\begin{abstract}

We consider the relationship between three continuum liquid crystal theories: Oseen-Frank, Ericksen and Landau-de Gennes. It is known that the function space is an important part of the mathematical model and by considering various function space choices for the order parameters $s$, $\vn$, and $\vQ$, we establish connections between the variational formulations of these theories. We use these results to derive a version of the Oseen-Frank theory using special functions of bounded variation. This proposed model can describe both orientable and non-orientable defects. Finally we study a number of frustrated nematic and cholesteric liquid crystal systems and show that the model predicts the existence of point and surface discontinuities in the director.

\end{abstract}

%
%
%
% Introduction
%
%
%

\section{Introduction}

In the study of the calculus of variations, modelling any physical problem has two basic aspects. The Lagrangian represents the mathematical model of the free energy density of the system being studied. The second aspect is the regularity of the mappings being considered: the function space. Even though this is well known, usually the majority of the focus is given over to the exact form of the Lagrangian, even though with a fixed Lagrangian choosing a different function space can give rise to a different minimum energy for the functional. This idea was first observed by Lavrentiev \cite{lavrentiev1926sur} and has since been known as the Lavrentiev phenomenon. This has particular relevance in the study of liquid crystals because of the abundance of continuum theories and order parameters. For the purposes of this paper we shall investigate three continuum theories: Oseen-Frank, Ericksen, and Landau-de Gennes. Each of these theories uses a different order parameter. The Landau-de Gennes theory uses a symmetric and traceless matrix $\vQ$, Ericksen's theory uses a scalar order parameter $s$ in conjunction with a director $\vn$, whilst the Oseen-Frank theory just uses a director $\vn$. Each theory will be formally introduced in the next section. The Oseen-Frank theory is conceptually the simplest but it is flawed when the function space choice for the  director field $\vn$ is  $ W^{1,2}\left( \Omega,\mathbb{S}^2 \right)$. This is because it does not give finite energy to the $\pm \frac{1}{2}$ defects \cite{ball2011orientability} and it does not respect the head to tail symmetry of the molecules. However we will show in this paper that if a form of the Oseen-Frank free energy  is considered where the director field is a special function of bounded variation, these problems can be addressed.

\vspace{3 mm}

The paper itself is split into two main sections. In the first we investigate the relationship between the three outlined continuum theories. By writing the matrix order parameter $\vQ$ in the uniaxial form, where two of its eigenvalues are equal, so that
\begin{equation}\label{0.1}
 \vQ:=s\left( \vn \otimes \vn -\frac{1}{3} \right),
\end{equation}
it is clear that there will be some kind of connection between these three different theories. By considering different choices of function spaces for the variables $s,\vn$, and $\vQ$, as well as the domain topology and dimension, we will prove Theorems \ref{3:1D-Uniaxial-Ericksen}, \ref{3:1D-Biaxial-Ericksen}, \ref{3:2,3D Uniaxial Ericksen}, and \ref{3:2,3D Biaxial Ericksen}. All of these results show that the uniaxial or biaxial Landau-de Gennes model is equivalent to a specific Ericksen model under suitable function space assumptions. Importantly we prove that if $\Omega \subset \mathbb{R}^d$ where $d>1$ then the two theories are only equivalent if we utilise director fields which are special functions of bounded variation (SBV). Functions of bounded variation have previously been suggested for the study of liquid crystals (see \cite[Section 4.6]{ambrosio2000functions} or \cite{aviles1987mathematical}) and Theorems \ref{3:2,3D Uniaxial Ericksen} and \ref{3:2,3D Biaxial Ericksen} show that this suggestion was warranted.

\vspace{3mm}

The main technical issue which we need to consider for our equivalence results is finding a unit vector field $\vm$ which corresponds to a given line field $\vn\otimes\vn$. This lifting question in Sobolev and BV spaces has been studied in its own right in a number of papers \cite{bourgain2000lifting,davila2003lifting}. However we will use our lifting results to prove the connections between the Landau-de Gennes theory and Ericksen's theory. In total, we will prove three lifting results, Propositions \ref{3:prop1}, \ref{3:prop2 Ball-Zarnescu Extension}, and \ref{3:prop3 Line field Vector field SBV}. The different results are needed because the problem can be very different depending on the ambient dimension and whether the domain is simply connected or not. Proposition \ref{3:prop3 Line field Vector field SBV} is particularly interesting as it illustrates the fact that the vector field cannot always retain the same regularity as the line field.

\vspace{3 mm}

 The basic idea that these theories are part of some hierarchy has received some attention in recent years. In papers by Majumdar, Zarnescu, and Nguyen \cite{majumdar2010landau,nguyen2013refined}, minimisers of the Landau-de Gennes free energy were shown to converge to minimisers of the Oseen-Frank free energy in the vanishing elastic constant limit. This limit essentially forces the matrix order parameter to become uniaxial in order to minimise the dominant bulk energy term. We will consider a similar limit in Section \ref{section: new approach} where we show that by considering a vanishing elastic constant limit, we can justify an Oseen-Frank model using director fields
 \begin{equation}\label{0.2}
   \vn \in SBV^2\left( \Omega ,\mathbb{S}^2 \cup \left\{ 0 \right\} \right).
 \end{equation}
 The second half of the paper is devoted to this exploration of this proposed model. We remind the reader that the basic definitions and properties of functions of bounded variation can be found in \cite{ambrosio2000functions}. Any relevant results pertaining to functions of bounded variation will be introduced in due course. 
 
 \vspace{3 mm}

When trying to model nematic or cholesteric liquid crystals, it is known that many of the commonly used molecules are rod-like. Therefore any model should respect this natural invariance. The Landau-de Gennes theory does this by utilising a matrix order parameter with terms such as $\vn \otimes \vn$ to ensure the symmetry. One of the main conclusions of the first half of the paper is that if we wish to use the director as a simpler order parameter, then we must consider director fields of bounded variation for full generality. Such a model is considered in the latter half of the paper and the basics of the problem are established. We prove the existence of a minimiser, we find various forms of the Euler Lagrange equation, we discover solutions in simple cases, and we find minimisers for specific liquid crystal problems.

\vspace{3 mm}

Special functions of bounded variation are a degree more technical than Sobolev functions, therefore it is important to show that the extra complexity yields more accurate predictions of liquid crystal behaviour. This is what we aim to achieve in Sections \ref{section: SBV}-\ref{section: cholesteric problem}, where we show that by studying models with director fields of such regularity, the predictions made using Sobolev spaces can be extended. We will study an Oseen-Frank energy using functions of bounded variation $\vn \in SBV^2 \left( \Omega, \mathbb{S}^2 \cup \left\{ 0 \right\} \right)$ on various domains. In particular we will examine the widely studied cuboid domain problem, where
\begin{equation}\label{4.6c}
 \Omega = (-L_1,L_1)\times (-L_2,L_2) \times (0,L_3),
\end{equation}
with frustrated boundary conditions on the top and bottom faces. Proposition \ref{4: cholesteric multidimensional minimisers} shows that for large values of the confinement ration $\frac{L_3}{P}$, where $P$ is the pitch of the cholesteric, the minimiser of the Oseen-Frank energy must depend on more than a single variable. Furthermore any function of just $z$ cannot even be a local minimiser for the SBV problem with a large enough confinement ratio. Both of these results have been known experimentally for some time as the existence of stable multi-dimensional cholesteric structures such as cholesteric fingers \cite{oswald2000static,smalyukh2005electric}, double twist cylinders \cite{kikuchi2002polymer,wright1989crystalline}, and torons \cite{smalyukh2010three}, has been thoroughly proven and investigated. However using our proposed model allows the problem to addressed analytically. The key to this development is using functions of bounded variation because the results proved elusive when considering director fields $\vn \in W^{1,2} \left( \Omega ,\mathbb{S}^2 \right)$ \cite{Bedford2014}.

\section{Continuum theories and Preliminaries}

The oldest continuum theory of liquid crystals was formulated by Oseen and Frank \cite{oseen1933theory,frank1958theory} in the first half of the twentieth century. In general the free energy density that they derived depended on four material parameters, but in this paper we will consider the simplified functional using the one constant approximation so that it can be written in the form
\begin{equation}\label{4.1}
 I_O(\vn):=\int_\Omega |\nabla\vn|^2+2t\vn\cdot\nabla \times \vn+t^2\,dx,
\end{equation}
where $\vn :\Omega\rightarrow \mathbb{S}^2$. The liquid crystal molecules are rod-like, and this theory was developed using a coarse-graining approach on the supposition that the energy of the system can be modelled through the elastic strains of their principal axis $\vn$. In the 1990s Ericksen proposed an extension of this theory with the addition of a scalar order parameter $s:\Omega\rightarrow \mathbb{R}$ to represent the degree of orientation of the molecules. The director represents an average local orientation, and the scalar order parameter represents the local variance from this mean. In this setting the free energy has the form \cite{virga1994variational}
\begin{equation}\label{4.2}
 I_E(\vn,s)=\int_\Omega 2s^2\left( |\nabla\vn|^2+2t\vn\cdot\nabla \times \vn+t^2\right) +\frac{2}{3}|\nabla s|^2+\sigma(s)\,dx,
\end{equation}
where $\sigma(s):=\frac{a}{2}s^2+\frac{b}{3}s^3+\frac{c}{4}s^4$. The function $\sigma$ is the bulk energy which represents the competition between the isotropic and ordered phases. The function must be bounded below, so that $c>0$. Furthermore, well below the nematic-isotropic transition temperature, $s=0$ should not be a local minimum of $\sigma$, hence $a<0$. A third approach came from Landau and de Gennes. Their idea was to use a matrix order parameter $\vQ:\Omega\rightarrow\mathbb{R}^{3\times 3}$ which is traceless and symmetric with the following free energy (see for example \cite{alexander2006stabilizing,kleinert1981lattice})
\begin{equation}\label{4.3}
 I_L(\vQ)=\int_\Omega |\nabla \vQ|^2+4t\vQ\cdot \nabla \times \vQ+3t^2|\vQ|^2+ \psi_B(\vQ)\,dx,
\end{equation}
where $\left( \nabla \times \vQ\right)_{ij}=\sum_{a,b} \epsilon_{jab}\vQ_{ib,a}$ and $\psi_B(\vQ):=\frac{\overline{a}}{2}\text{Tr}\left(\vQ^2\right)+\frac{\overline{b}}{3}\text{Tr}\left(\vQ^3\right)+\frac{\overline{c}}{4}\left(\text{Tr}\left(\vQ^2\right)\right)^2$. Physical constraints ensure that $\overline{c}>0$ and $\overline{a}<0$ well below the transition temperature for the same reasons as in Ericksen's theory. If in addition we impose the condition that two of the three eigenvalues of $\vQ$ must be equal, then we reduce \eqref{4.3} to the case of uniaxial states. With this restriction, $\vQ$ can be written as
\begin{equation}\label{4.4}
 \vQ:=s\left( \vn\otimes\vn-\frac{1}{3}I\right),
\end{equation}
for some $s\in \mathbb{R}$, and $\vn\in \mathbb{S}^2$. Through a simple calculation, if we substitute \eqref{4.4} into the bulk energy $\psi_B$ we get exactly the Ericksen bulk energy provided that 
\begin{equation}\label{4.5}
 \overline{a}=\frac{3a}{2},\,\,\overline{b}=\frac{9b}{2},\,\,\overline{c}=\frac{9c}{4}.
\end{equation}
Throughout this paper we will be assuming that \eqref{4.5} holds. Finally, it is clear that the general biaxial Landau-de Gennes theory cannot be equivalent to the traditional Ericksen theory \eqref{4.2}. Thus we need to define a new free energy which we will term the biaxial Ericksen free energy
\begin{equation}\label{4.6}
\begin{split}
 I_{BE}(\vn,\vm,s_1,s_2)= &
 \int_\Omega 2s_1^2\left( |\nabla\vn|^2+2t\vn\cdot\nabla \times \vn+t^2\right)+2s_2^2\left( |\nabla\vm|^2+2t\vm\cdot\nabla \times \vm+t^2\right)+\tilde{\sigma}(s_1,s_2) \\
 &-2s_1s_2\left( |\vm^T\nabla \vn-t\vn\times\vm|^2+|\vn^T\nabla\vm-t\vm\times\vn|^2-t^2\right) \\
 &+\frac{|\nabla(s_1-s_2)|^2}{3}+\frac{1}{3}\left( |\nabla s_1|^2+|\nabla s_2|^2\right)\,dx,
 \end{split}
\end{equation}
where $s_1\geqslant 0$, $s_2\leqslant 0$, and $\vn,\vm \in \mathbb{S}^2$. We have termed it the biaxial Ericksen free energy because it uses two scalar order parameters and two directors and reduces to \eqref{4.2} if $s_1=0$ or $s_2=0$. The second half of the paper focuses on variational problems for functions of bounded variation. In order to study such problems it is important to introduce some results in SBV which we will use in the direct method to prove existence of a minimiser. Suppose that $\Omega \subset \mathbb{R}^d$ is an open and bounded set.
\begin{theorem}[Closure of SBV \cite{ambrosio2000functions}]\label{4: SBV closure}
 Let $\phi:[0,\infty) \rightarrow [0,\infty]$, $\theta : (0,\infty) \rightarrow (0,\infty]$ be two lower-semicontinuous, increasing functions which satisfy 
 \begin{equation}\label{5.1}
  \lim_{t\rightarrow \infty} \frac{\phi(t)}{t} = \infty,\quad \lim_{t\rightarrow 0} \frac{\theta(t)}{t} = \infty.
 \end{equation}
 If $(u_j) \subset SBV\left(\Omega ,\mathbb{R} \right)$ is a sequence such that 
 \begin{equation}\label{5.2}
  \sup_j \left\{ \, \int_\Omega \phi\left( |\nabla u_j|\right)\,dx+ \int_{S_{u_j}} \theta\left( |u_{j+}-u_{j-}|\right) \, d\mathcal{H}^{d-1}\, \right\} <\infty.
 \end{equation}
 Then if $(u_j)$ is weak$^*$ convergent in $BV\left(\Omega,\mathbb{R}\right)$ then its limit $u \in SBV\left( \Omega ,\mathbb{R} \right)$. Additionally 
 \begin{equation}\label{5.3}
 \begin{split}
  \liminf_{j\rightarrow \infty} \int_\Omega \phi\left(|\nabla u_j|\right)\,dx \geqslant \int_\Omega \phi\left(|\nabla u|\right)\,dx \\
  \liminf_{j\rightarrow \infty} \int_{S_{u_j}} \theta\left(|u_{j+}-u_{j-}|\right)\,dx \geqslant \int_{S_u} \theta\left(|u_{+}-u_{-}|\right)\,dx
 \end{split}
 \end{equation}
 if $\phi$ is convex and $\theta$ is concave.
\end{theorem}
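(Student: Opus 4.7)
The plan is to apply the direct method to the distributional derivatives $Du_j$. Since each $u_j \in SBV(\Omega)$, we have the decomposition $Du_j = \nabla u_j \,\mathcal{L}^d + (u_{j+}-u_{j-})\nu_{u_j}\,\mathcal{H}^{d-1}\llcorner S_{u_j}$ with no Cantor part. The weak$^*$ convergence in $BV$ gives $Du_j \tostar Du$ as Radon measures. The core task is to show that the Cantor part $D^c u$ in the Lebesgue decomposition of $Du$ vanishes, so that $u \in SBV$, and then to establish the two liminf inequalities.

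First I would prove equi-integrability of the absolutely continuous parts. The condition $\phi(t)/t \to \infty$ as $t\to\infty$, combined with the uniform bound on $\int_\Omega \phi(|\nabla u_j|)\,dx$, is precisely the de la Vall\'ee-Poussin criterion, so $\{\nabla u_j\}$ is equi-integrable in $L^1(\Omega;\mathbb{R}^d)$. Passing to a subsequence, $\nabla u_j \rightharpoonup g$ weakly in $L^1$, and this limit $g$ must coincide with the absolutely continuous part of $Du$ with respect to $\mathcal{L}^d$ once the singular structure has been identified.

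Next I would analyse the jump measures $\mu_j := (u_{j+}-u_{j-})\nu_{u_j}\mathcal{H}^{d-1}\llcorner S_{u_j}$ using the superlinearity of $\theta$ at zero. For any $\varepsilon > 0$, pick $\delta > 0$ so that $|t| \leq \varepsilon\,\theta(|t|)$ whenever $|t|<\delta$; then the contribution of jumps of height less than $\delta$ to $|\mu_j|(\Omega)$ is bounded by $\varepsilon$ times the uniform bound in \eqref{5.2}. This equi-absolute-continuity of the jump measures with respect to the jump energy is the mechanism that prevents the formation of Cantor-type mass in the limit: a blow-up at a point where $D^c u \ne 0$ leads to a contradiction, since neither the equi-integrable gradient part nor the jump part (whose small-amplitude tail is uniformly small, while whose large-amplitude part is $\mathcal{H}^{d-1}$-rectifiable with finite total mass) can produce a Cantor-like singular contribution. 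This step is the main technical obstacle and is exactly Ambrosio's original SBV closure argument; the outcome is $D^c u = 0$, hence $u \in SBV(\Omega)$, together with the identification of $\nabla u$ as the weak $L^1$ limit of $\nabla u_j$ and of $S_u$ as the support of the weak$^*$ limit of the $\mu_j$.

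Finally, for the lower semicontinuity estimates, the weak $L^1$ convergence $\nabla u_j \rightharpoonup \nabla u$ combined with convexity of $\phi$ yields the first inequality via the classical convex-integrand semicontinuity theorem (Ioffe--De Giorgi). For the second, concavity of $\theta$ together with the established structure of the weak$^*$ limit of $\mu_j$ on the rectifiable jump set allows one to invoke the standard lower semicontinuity result for concave surface integrands on $(d-1)$-rectifiable sets: concave subadditivity ensures that merging or splitting of jumps in the approximating sequence can only increase the surface integral in the limit, giving the required inequality.
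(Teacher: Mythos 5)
This theorem is not proved in the paper at all: it is quoted verbatim from Ambrosio--Fusco--Pallara \cite{ambrosio2000functions} as a known ingredient for the direct method, so there is no internal proof to compare your argument against; the expected justification is the citation itself.

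Judged as a blind proof, your outline correctly identifies how the two superlinearity hypotheses are used -- de la Vall\'ee-Poussin equi-integrability of $\nabla u_j$ from $\phi(t)/t\to\infty$, and the bound $|t|\leqslant \varepsilon\,\theta(|t|)$ for small $|t|$ giving uniform smallness of the low-amplitude part of the jump measures -- but it has a genuine gap at precisely the point you flag as ``the main technical obstacle.'' The claim that a point of nonzero Cantor part leads to a contradiction ``since neither the equi-integrable gradient part nor the jump part \dots can produce a Cantor-like singular contribution'' is an assertion, not an argument: weak$^*$ limits of the jump measures $\mu_j$ can a priori charge sets of Hausdorff dimension strictly between $d-1$ and $d$, and ruling this out is the actual content of Ambrosio's closure theorem. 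The standard proofs do real work here, either by slicing to one dimension and using the one-dimensional characterisation of SBV together with Theorem \ref{4: 1D SBV sections}-type section arguments, or by the chain-rule/truncation characterisation of SBV combined with Besicovitch differentiation and a genuine blow-up analysis; none of this is supplied or reduced to cited lemmas in your sketch. Similarly, the second liminf inequality for concave $\theta$ on the rectifiable jump set is itself a nontrivial theorem (and your heuristic about merging jumps is stated with the inequality pointing the wrong way: subadditivity makes the limit's jump energy no larger, which is what lower semicontinuity requires), so invoking ``the standard lower semicontinuity result for concave surface integrands'' amounts to citing a statement of essentially the same depth as the one to be proved. In short, the skeleton is the right one, but the two steps that constitute the theorem are presumed rather than proved.
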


\begin{theorem}[Compactness of SBV \cite{ambrosio2000functions}]\label{4: SBV compactness}
 
 If $(u_j) \subset SBV\left( \Omega ,\mathbb{R} \right)$ is a sequence of functions satisfying \eqref{5.2} and $\sup_j ||u_j||_{\infty} <\infty$ then there exists some $u \in SBV\left( \Omega ,\mathbb{R}\right)$ such that for some subsequence $u_{j_k} \tostar u$ in BV.

\end{theorem}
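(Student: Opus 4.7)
The plan is to reduce the statement to the classical $BV$ compactness theorem and then invoke the closure result (Theorem~\ref{4: SBV closure}) to guarantee that the weak-$\ast$ limit lies in $SBV$. The first and main task is to convert the hypotheses \eqref{5.2} together with $\sup_j \|u_j\|_\infty \leq M$ into a uniform bound on $\|u_j\|_{BV(\Omega)}$, i.e.\ to control both pieces of
$$|Du_j|(\Omega) = \int_\Omega |\nabla u_j|\, dx + \int_{S_{u_j}} |u_{j+}-u_{j-}|\, d\mathcal{H}^{d-1}.$$

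For the absolutely continuous part I would use a de la Vall\'ee Poussin style argument: since $\phi$ is increasing and $\phi(t)/t \to \infty$ at infinity, for any $\lambda > 0$ there exists $T_\lambda$ with $t \leq T_\lambda + \phi(t)/\lambda$ for every $t \geq 0$, so integrating yields
$$\int_\Omega |\nabla u_j|\, dx \leq T_\lambda |\Omega| + \lambda^{-1} \int_\Omega \phi(|\nabla u_j|)\, dx,$$
which is uniformly controlled via \eqref{5.2}. For the jump part the $L^\infty$ bound is essential: each jump lies in $(0,2M]$, and the superlinearity of $\theta$ at zero yields $t_0 > 0$ with $\theta(t) \geq t$ on $(0,t_0]$, while monotonicity of $\theta$ then forces $\theta(t) \geq \theta(t_0) \geq (t_0/(2M))\, t$ on $[t_0, 2M]$. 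Hence $\theta(t) \geq c\, t$ on all of $(0,2M]$ with $c = \min(1, t_0/(2M)) > 0$, and the jump integral is bounded by $c^{-1}$ times the corresponding quantity in \eqref{5.2}.

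Combining these two estimates with the uniform $L^\infty$ bound (which upgrades $L^1$ control to a full $BV$ norm bound since $|\Omega| < \infty$) gives $\sup_j \|u_j\|_{BV(\Omega)} < \infty$. Standard $BV$ compactness then extracts a subsequence $u_{j_k}$ converging weak-$\ast$ in $BV$ to some $u \in BV(\Omega, \mathbb{R})$. The hypotheses \eqref{5.2} are inherited by the subsequence, so Theorem~\ref{4: SBV closure} applies and forces $u \in SBV(\Omega, \mathbb{R})$, completing the proof.

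The main obstacle is the jump estimate: turning the purely local information $\theta(t)/t \to \infty$ as $t \to 0^+$ into a global linear lower bound $\theta(t) \geq ct$ requires that jump sizes live in a bounded interval, which is precisely where the $L^\infty$ hypothesis enters the argument in an essential way. Without it, one could only control the number of jumps of non-vanishing size but not the sum $\sum |u_{j+}-u_{j-}|$ itself.
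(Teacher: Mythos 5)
The paper does not actually prove this theorem; it states it with a citation to Ambrosio--Fusco--Pallara, so there is no in-paper proof to compare against. Your argument is correct and is essentially the standard textbook proof: the de la Vall\'ee Poussin truncation $t \leqslant T_\lambda + \phi(t)/\lambda$ controls $\int_\Omega |\nabla u_j|$, the superlinearity of $\theta$ near $0$ combined with monotonicity of $\theta$ and the $L^\infty$ bound (which confines jump amplitudes to $(0,2M]$) yields a global linear lower bound $\theta(t)\geqslant ct$ there and so controls the jump part, and together with $\|u_j\|_{L^1}\leqslant M|\Omega|$ this gives a uniform $BV$ bound; classical $BV$ weak-$\ast$ compactness (using the $L^\infty$ bound on the bounded $\Omega$ to pass from $L^1_{\mathrm{loc}}$ to $L^1$ convergence) then yields a subsequential weak-$\ast$ limit, and Theorem~\ref{4: SBV closure} forces that limit into $SBV(\Omega,\mathbb{R})$. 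Your closing remark correctly isolates the role of the $L^\infty$ hypothesis: without the upper bound on jump amplitudes, the local condition $\theta(t)/t\to\infty$ as $t\to 0^+$ does not give a linear lower bound on $\theta$ over the full range of jump sizes, and the total variation of the jump part cannot be controlled.
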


We note that these closure and compactness notions are also true in the vectorial case. See for example \cite[Thm 2.1]{focardi2001variational}. Therefore from a mathematical point of view, functions of bounded variation form a sensible framework within which to study the calculus of variations. One final important result to introduce is that of one-dimensional sections of SBV functions  (See \cite[Remark 3.104]{ambrosio2000functions} or \cite[Section 1]{celada1997minimum}).

\begin{theorem}\cite[Section 1]{celada1997minimum}\label{4: 1D SBV sections}
 Let $U \subset \mathbb{R}^d$ be an open set, $\zeta \in \mathbb{S}^{d-1}$. Define $A:= \left\{ x \in \mathbb{R}^d \,|\, x\cdot \zeta =0\, \right\}$. If $\vn \in SBV\left( U ,\mathbb{R}^k \right)$ and $x \in A$ then 
 \begin{equation}\label{5.52}
  \vm(t) := \vn(x + t\zeta) \in SBV\left( U_{x,\zeta},\mathbb{R}^k \right),
 \end{equation}
 where $U_{x\zeta} := \left\{ \, t \in \mathbb{R} \, \left| \, x + t\zeta \in U \, \right. \right\}$. Furthermore $S_{\vm} = \left( S_\vn \right)_{x\zeta}$.

\end{theorem}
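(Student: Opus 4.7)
The plan is to reduce the statement to the classical one-dimensional slicing theory for $BV$ functions, then exploit the defining characterization of $SBV$: namely, that the Cantor part of the distributional derivative vanishes identically.

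First I would invoke the standard $BV$ slicing theorem (e.g.\ Theorem 3.103 of Ambrosio--Fusco--Pallara) applied to $\vn$: for $\mathcal{H}^{d-1}$-a.e.\ $x \in A$, the slice $\vm(t) := \vn(x+t\zeta)$ lies in $BV(U_{x,\zeta}, \mathbb{R}^k)$, and the $\zeta$-directional component of the distributional derivative disintegrates over $A$ as
\begin{equation*}
\langle D\vn, \zeta\rangle = \int_A D\vm \, d\mathcal{H}^{d-1}(x)
\end{equation*}
in the sense of measures on $U$. The key additional input is that this disintegration respects the Lebesgue decomposition $D\vn = \nabla\vn\,dx + D^j\vn + D^c\vn$: each of the absolutely continuous, jump, and Cantor parts of $\langle D\vn, \zeta\rangle$ slices to the corresponding part of $D\vm$ for $\mathcal{H}^{d-1}$-a.e.\ $x$.

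Since $\vn \in SBV$ we have $D^c\vn \equiv 0$ on $U$, hence $\langle D^c\vn, \zeta\rangle$ is the zero measure. The Cantor-part disintegration then forces $D^c\vm \equiv 0$ on $U_{x,\zeta}$ for $\mathcal{H}^{d-1}$-a.e.\ $x$, which is exactly the statement $\vm \in SBV(U_{x,\zeta}, \mathbb{R}^k)$. For the jump-set identification $S_\vm = (S_\vn)_{x,\zeta}$, I would use that $S_\vn$ is countably $\mathcal{H}^{d-1}$-rectifiable with $D^j\vn$ concentrated on it and density determined by the approximate one-sided traces $\vn_\pm$; a Fubini-type argument for rectifiable sets shows that for $\mathcal{H}^{d-1}$-a.e.\ $x$ the slice $(S_\vn)_{x,\zeta}$ is at most countable and transverse to $\zeta$, and the jump-part disintegration matches these points exactly with the jump points of $\vm$ with the correct trace values.

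The main obstacle is the Cantor slicing step: precisely formulating and proving that the Cantor derivative slices to the Cantor derivative of $\vm$ for a.e.\ $x$, so that global vanishing of $D^c\vn$ propagates to the slices. This relies on the fine disintegration theory of singular measures along foliations by parallel lines, which is the technical core of the argument; the vectorial extension to $\mathbb{R}^k$-valued functions then reduces to the scalar case by componentwise application.
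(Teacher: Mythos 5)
The paper offers no proof of this statement at all: it is imported directly from \cite[Section 1]{celada1997minimum} (equivalently \cite[Remark 3.104]{ambrosio2000functions}), and your argument is precisely the standard proof underlying those references — the one-dimensional BV slicing theorem combined with the fact that the Lebesgue decomposition of $D\vn$ is compatible with slicing, so that $D^c\vn=0$ forces $D^c\vm=0$ on the slices and $S_\vm=\left( S_\vn\right)_{x,\zeta}$ for $\mathcal{H}^{d-1}$-a.e.\ $x$. Your explicit a.e.-$x$ qualifier is correct and in fact sharper than the paper's wording, which reads as though the conclusion held for every $x\in A$, something that cannot even be formulated for an equivalence class of functions defined only up to null sets.
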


This result will be of importance when explicit minimisers are sought for specific nematic and cholesteric problems in Sections \ref{4: sec lavrentiev} and \ref{section: cholesteric problem}.

%
%
%
% Initial Lemmas
%
%
%

\section{Preceding Lemmas}\label{sec:lemmas}

In order to prove the equivalence results of this paper we require a number of relatively simple preliminary lemmas. Therefore we prove them in this section in isolation, then we will apply them when appropriate in the subsequent theorems. Lemmas \ref{3:lemma2}, \ref{3:lemma6}, and \ref{3:lemma3} concern specific facts about Sobolev spaces and Lemmas \ref{3:lemma4} and \ref{3:lemma5} are central to deriving the appropriate function space for the scalar order parameters. It should be noted that when we are dealing with Sobolev functions we always assume that we have chosen the precise representative amongst the equivalence class of functions equal almost everywhere. Using this precise representative \cite[Section 4.8]{evans1991measure} it is important to define a little notation that we will be using. If $f \in W^{1,p}\left( \Omega \right)$ then we define
\begin{equation}\label{4.6.1}
 C_f:= \bigcap_{\epsilon>0} \overline{\left\{  f \leqslant \epsilon \right\}}.
\end{equation}
This might just seem like an unnecessarily complicated method of defining the set where $f$ vanishes. However this is not true in general. If $f$ is continuous then $C_f = \left\{ f = 0\right\}$, but the converse is false. Clearly $C_f$ is closed and contains $\left\{ f=0 \right\}$, therefore
\begin{equation}\label{4.6d}
 \overline{ \left\{ f=0 \right\} } \subset C_f.
\end{equation}
In this paper we will need to use two different characterisations of Sobolev spaces. The first is the absolute continuity on lines (see for example \cite[Section 4.9.2]{evans1991measure}) and the second is the difference quotient characterisation \cite[Thm 10.55]{leoni2009first}.

\begin{theorem}[Absolute continuity]\label{absolutely continuous}
 Let $p\in  [1,\infty)$ and $\Omega \subset \mathbb{R}^d$ be an open set. If $f\in W^{1,p} \left( \Omega \right)$ (and is the precise representative) then for each $k=1,\dots,d$ 
 \begin{equation}\label{4.6e}
  f^*_k(x_0,t) = f(\dots,x_{k-1},t,x_{k+1},\dots)
 \end{equation}
 is absolutely continuous in $t$ for $L^{d-1}$ almost every $x_0 = (x_1,\dots,x_{k-1},x_{k+1},\dots,x_d) \in \mathbb{R}^{d-1}$. Additionally $\left(f^*_k \right)' \in L^p\left( \Omega \right)$.
 
 \vspace{3 mm}
 
 Conversely, if $f \in L^p\left( \Omega \right)$ and $f=g$ almost everywhere, where for each $k=1,\dots,d$, the functions 
 \begin{equation}\label{4.6f}
  g_k(x_0,t) = g(\dots,x_{k-1},t,x_{k+1},\dots)
 \end{equation}
 are absolutely continuous in $t$ for $L^{d-1}$ almost every $x_0 \in \mathbb{R}^{d-1}$ and $g'_k \in L^p\left( \Omega \right)$. Then $f \in W^{1,p}\left( \Omega \right)$.

\end{theorem}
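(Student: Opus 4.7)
The plan is to handle the two directions separately, with the forward direction going through smooth approximation and the converse through integration by parts on slices.

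For the forward direction, I would first verify the claim for $f \in C^\infty(\Omega) \cap W^{1,p}(\Omega)$, where Fubini immediately gives that each slice $f^*_k(x_0,\cdot)$ is smooth, hence locally absolutely continuous, with classical derivative coinciding with the slice of the partial derivative; this slice derivative lies in $L^p$ of the line for $\mathcal{L}^{d-1}$-a.e.\ $x_0$. For general $f \in W^{1,p}(\Omega)$ I would approximate by mollifications $f_n \to f$ in $W^{1,p}(\Omega)$ and pointwise a.e. Passing to a subsequence and applying Fubini to both $|f_n - f|^p$ and $|\partial_k f_n - \partial_k f|^p$, I obtain a full-measure set of $x_0 \in \mathbb{R}^{d-1}$ along which $f_n(x_0,\cdot) \to f(x_0,\cdot)$ a.e.\ on the line and $(f_n)'_k(x_0,\cdot) \to (\partial_k f)(x_0,\cdot)$ in $L^p$ of the line. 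Writing the fundamental theorem of calculus
\begin{equation}
f_n(x_0,t) - f_n(x_0,s) = \int_s^t (f_n)'_k(x_0,r)\,dr
\end{equation}
and passing to the limit in $n$ (using $L^1_{\mathrm{loc}}$ convergence of the integrand) shows that on each such line $f$ agrees a.e.\ with an absolutely continuous function whose derivative is $(\partial_k f)(x_0,\cdot)$ and lies in $L^p$.

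The remaining subtlety is replacing ``agrees a.e.\ with an AC function'' by ``is itself AC.'' Here the hypothesis that $f$ is the precise representative is essential. I would invoke the standard fact that for $\mathcal{L}^{d-1}$-a.e.\ $x_0$, the slice of the precise representative of $f$ coincides with the precise representative of the slice: at every $t$ where the one-dimensional Lebesgue differentiation on the line produces a limit (which is everywhere for the AC limit found above), the pointwise value matches. Consequently the slice of $f$ is itself absolutely continuous on the good set of $x_0$, and its classical derivative equals $(\partial_k f)(x_0,\cdot) \in L^p$. I expect this identification of the precise representative with its slicewise precise representative to be the main technical obstacle, since every other step is routine Fubini plus approximation.

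For the converse, I assume the slice hypothesis on $g$ with $g = f$ a.e.\ and $g'_k \in L^p(\Omega)$ (defined as the a.e.\ line derivative, measurable by an exhaustion argument). For any $\varphi \in C_c^\infty(\Omega)$, Fubini and integration by parts on each admissible line (noting $\varphi$ vanishes outside a compact subset of each line) give
\begin{equation}
\int_\Omega g\, \partial_k \varphi \, dx = \int_{\mathbb{R}^{d-1}} \left( \int_{\mathbb{R}} g_k(x_0,t)\, \partial_t \varphi(x_0,t) \, dt \right) dx_0 = -\int_\Omega g'_k\, \varphi \, dx,
\end{equation}
where the inner integration by parts is justified because $g_k(x_0,\cdot)$ is absolutely continuous on $\mathrm{supp}\,\varphi(x_0,\cdot)$. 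Since $f = g$ a.e., the same identity holds with $f$ on the left, so $g'_k$ is the weak $k$-th partial of $f$, and $f \in W^{1,p}(\Omega)$. This direction is essentially a clean application of Fubini and one-dimensional integration by parts with no representative issues to worry about.
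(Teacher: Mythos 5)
The paper does not prove this theorem; it is a quoted characterisation of Sobolev spaces taken from Evans and Gariepy, Section 4.9.2. So there is no ``paper's proof'' to compare against, and I assess your argument on its own merits.

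Your converse direction is correct and essentially complete: by Fubini, it suffices to integrate by parts on each line; the absolute continuity of $g_k(x_0,\cdot)$ on every connected component of the slice $\{t : (x_0,t)\in\Omega\}$ together with the fact that $\varphi(x_0,\cdot)$ has compact support inside each such component kills the boundary terms, and the identity transfers from $g$ to $f$ because $f=g$ almost everywhere.

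The forward direction has the right skeleton (smooth case, mollification, Fubini-selected subsequence giving slice-wise $W^{1,p}$ convergence, fundamental theorem of calculus in the limit), but the step you correctly flag as the main obstacle is a genuine gap rather than something to be cited as a ``standard fact.'' What actually closes it is a short chain of non-routine facts from potential theory: the exceptional set $N$ where the convolution averages fail to converge to the precise representative $f^*$ has $p$-capacity zero; for every $p\ge 1$ a set of $p$-capacity zero has $\mathcal{H}^{d-1}$-measure zero (for $p=1$ this is exactly $\mathrm{Cap}_1(N)=0\Leftrightarrow\mathcal{H}^{d-1}(N)=0$, for $1<p<d$ one gets $\mathcal{H}^s(N)=0$ for all $s>d-p$, and for $p\ge d$ the representative is continuous so $N$ is empty); and by the Eilenberg coarea inequality,
\begin{equation}
\int^*_{\mathbb{R}^{d-1}} \mathcal{H}^0\bigl(N\cap(\{x_0\}\times\mathbb{R})\bigr)\,d\mathcal{L}^{d-1}(x_0)\;\le\;C\,\mathcal{H}^{d-1}(N)=0,
\end{equation}
so $\mathcal{L}^{d-1}$-almost every line in the $k$th coordinate direction misses $N$ entirely. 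Only on that basis does $f^*$ agree \emph{at every point} of almost every line with the absolutely continuous limit of the mollified slices, and only then is $f^*_k(x_0,\cdot)$ itself absolutely continuous rather than merely equal a.e.\ to an absolutely continuous function. I would also caution against the specific formulation ``the slice of the precise representative coincides with the precise representative of the slice'': this conflates $d$-dimensional ball averages with one-dimensional interval averages and is harder to justify directly than the route above; proving that the bad set is missed outright by almost every line is both cleaner and what the capacity estimates actually deliver. Since this capacity-plus-coarea argument is the entire non-routine content of the theorem, a self-contained proof must spell it out rather than invoke it.
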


\begin{proposition}[Difference quotient]\label{difference quotient}
Let $p\in [1,\infty)$ and $\Omega \subset \mathbb{R}^d$ be an open set and $u\in L^p(\Omega)$. Then 
\begin{equation}\label{4.9b}
\begin{split}
u \in W^{1,p}\left(\Omega\right) \, \Longleftrightarrow &\, \forall \, i=1,\dots,d \,\,\exists K>0 \,\, \text{such that} \,\,\forall \,\Omega'\subset\subset \Omega\,\,\text{and} \\ 
&\, |h|\leqslant dist(\Omega',\partial\Omega)\,\text{we have}\,\, ||u(\cdot+he_i)-u(\cdot)||_{p,\Omega'} \leqslant K 
\end{split}
\end{equation}

\end{proposition}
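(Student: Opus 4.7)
\vspace{3mm}
\textbf{Proof proposal.} The plan is to prove the two implications separately: the forward direction by mollification combined with the fundamental theorem of calculus, and the reverse by weak compactness of difference quotients in $L^p$. For the reverse direction to carry any information the bound must be linear in $|h|$ (otherwise any $u \in L^p(\Omega)$ trivially satisfies $\|u(\cdot+he_i)-u(\cdot)\|_{p,\Omega'} \leqslant 2\|u\|_{p,\Omega}$), so I interpret the hypothesis as $\|u(\cdot+he_i)-u(\cdot)\|_{p,\Omega'} \leqslant K|h|$, which is the form in Leoni's book.

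For the direction $(\Rightarrow)$, I would approximate $u \in W^{1,p}(\Omega)$ by mollifications $u_\varepsilon = u \ast \rho_\varepsilon$ on a slightly smaller domain, so that $u_\varepsilon \in C^\infty$ and $u_\varepsilon \to u$ in $W^{1,p}_{\text{loc}}(\Omega)$. For a smooth representative the fundamental theorem of calculus gives
\begin{equation}
u(x + he_i) - u(x) = h\int_0^1 \partial_i u(x + the_i)\,dt
\end{equation}
for $x \in \Omega'$ whenever $|h| \leqslant \text{dist}(\Omega',\partial\Omega)$. Applying Jensen's inequality to the inner integral, raising to the $p$-th power, integrating over $\Omega'$, and swapping the order of integration via Fubini (using translation invariance of Lebesgue measure) yields $\|u(\cdot+he_i)-u(\cdot)\|_{p,\Omega'} \leqslant |h|\,\|\partial_i u\|_{p,\Omega}$. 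Letting $\varepsilon \to 0$ preserves the estimate, giving $K = \|\partial_i u\|_{p,\Omega}$.

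For the direction $(\Leftarrow)$ with $p \in (1,\infty)$, I would introduce the rescaled difference quotients $D_h^i u(x) := h^{-1}(u(x+he_i)-u(x))$, which by hypothesis satisfy $\|D_h^i u\|_{p,\Omega'} \leqslant K$ uniformly for all admissible $h$. Since $L^p(\Omega')$ is reflexive, Banach-Alaoglu produces a subsequence $h_k \to 0$ and a limit $v_i \in L^p(\Omega')$ with $D_{h_k}^i u \rightharpoonup v_i$. For any test function $\phi \in C_c^\infty(\Omega')$, a discrete integration by parts gives
\begin{equation}
\int_{\Omega'} D_{h_k}^i u \cdot \phi\,dx = -\int_{\Omega'} u \cdot D_{-h_k}^i\phi\,dx,
\end{equation}
and since $\phi$ is smooth with compact support, $D_{-h_k}^i\phi \to \partial_i\phi$ uniformly on $\Omega'$. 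Passing to the limit identifies $v_i$ with the distributional partial derivative $\partial_i u$ on $\Omega'$. Exhausting $\Omega$ by $\Omega' \subset\subset \Omega$ with a uniform constant $K$ then yields $\partial_i u \in L^p(\Omega)$.

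The main obstacle is the borderline endpoint $p=1$ included in the stated range, where $L^1$ is not reflexive. The weak compactness argument only provides a Radon measure as the distributional derivative, producing $u \in BV(\Omega)$ in general rather than $u \in W^{1,1}(\Omega)$; indeed, a BV function with a jump across a hyperplane perpendicular to $e_i$ still satisfies the linear $|h|$ bound, so no direct upgrade to absolute continuity is available from the hypothesis alone. I would therefore either restrict the assertion to $p>1$ (the version actually proved in Leoni) or invoke the cited reference directly for the $p=1$ case as formulated there.
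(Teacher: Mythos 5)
Your proposal is correct and is essentially the standard argument behind this result: the paper does not prove the proposition but simply cites it (Leoni, Thm.\ 10.55), and your mollification/FTC argument for the forward implication and weak-compactness-of-difference-quotients argument for the converse is exactly the proof that reference gives. Your two corrections to the statement are also right and worth recording: the bound must read $\|u(\cdot+he_i)-u(\cdot)\|_{p,\Omega'}\leqslant K|h|$ (otherwise the condition is vacuous, and indeed the paper's later applications, e.g.\ deducing $s\in W^{1,2}$ from \eqref{4.39}, rely on the $|h|$-scaled form), and the converse genuinely fails at $p=1$, where one only obtains $u\in BV$. Since the paper invokes the proposition only with $p=2$, restricting the converse to $p\in(1,\infty)$ affects nothing downstream.
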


%
%
%
% Lemma 2 - First Sobolev space Lemma
%
%
%

\begin{lemma}\label{3:lemma2}

Let $p\in [1,\infty)$ and $\Omega\subset \mathbb{R}^d$ be an open and bounded set. Suppose $f\in W^{1,p} \left(\Omega\right)$, and $g:\Omega \rightarrow \mathbb{R}$ are maps such that 
\begin{equation}\label{4.7}
|f(x)|\geqslant \delta >0\,\, a.e.\, x\in\Omega, \,\,\,\, fg\in W^{1,p}\left( \Omega \right)\,\,\,\, \text{and} \,\,\,\, |g(x)|\leqslant C\,\, a.e.\, x \in\Omega.
\end{equation}
Then $g\in W^{1,p} \left( \Omega\right)$.

\end{lemma}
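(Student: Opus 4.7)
The idea is to recover $g$ as the quotient $(fg)/f$ and exploit the fact that the denominator stays bounded away from zero. My plan is to apply the difference quotient characterisation of Proposition \ref{difference quotient}: since $g \in L^\infty(\Omega) \subset L^p(\Omega)$ (using $|g|\leqslant C$ and $\Omega$ bounded), it suffices to show that the translation differences of $g$ are uniformly $L^p$-bounded on compact subsets.

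To set this up I will use the elementary identity $\frac{a}{b} - \frac{c}{d} = \frac{ad-bc}{bd}$ applied to $a=(fg)(x+he_i)$, $b=f(x+he_i)$, $c=(fg)(x)$, $d=f(x)$, and then add and subtract $(fg)(x)f(x)$ in the numerator. After using $(fg)(x)/f(x) = g(x)$ (valid a.e.\ since $|f|\geqslant \delta$), this yields the pointwise identity
\begin{equation*}
g(x+he_i) - g(x) = \frac{1}{f(x+he_i)} \Bigl( [(fg)(x+he_i) - (fg)(x)] - g(x)\,[f(x+he_i) - f(x)] \Bigr),
\end{equation*}
holding a.e.\ on any $\Omega' \subset\subset \Omega$ with $|h|\leqslant \mathrm{dist}(\Omega',\partial\Omega)$, since the exceptional null sets $\{f(x)=0\}$ and $\{f(x+he_i)=0\}$ are both null in $\Omega$ (the latter because translation preserves Lebesgue measure).

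Now $|1/f(x+he_i)| \leqslant 1/\delta$ and $|g(x)| \leqslant C$ a.e., so the triangle inequality in $L^p(\Omega')$ gives
\begin{equation*}
\|g(\cdot+he_i)-g(\cdot)\|_{p,\Omega'} \leqslant \tfrac{1}{\delta}\|(fg)(\cdot+he_i)-(fg)(\cdot)\|_{p,\Omega'} + \tfrac{C}{\delta}\|f(\cdot+he_i)-f(\cdot)\|_{p,\Omega'}.
\end{equation*}
Since $fg,f \in W^{1,p}(\Omega)$, the forward direction of Proposition \ref{difference quotient} bounds each right-hand term uniformly in $h$, so the same holds for the left-hand side. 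Applying the converse direction of Proposition \ref{difference quotient} then gives $g \in W^{1,p}(\Omega)$.

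The only delicate point is the justification of the quotient identity almost everywhere; this is essentially automatic once one observes that $|f|\geqslant\delta$ a.e.\ and that translates of null sets remain null. An alternative route, which I would mention only as a sanity check, is to use Theorem \ref{absolutely continuous}: on $\mathcal{H}^{d-1}$-a.e.\ line, $f$ and $fg$ are absolutely continuous with $|f|\geqslant\delta$, so $g=(fg)/f$ is absolutely continuous on the line with classical derivative satisfying $|g'|\leqslant \delta^{-1}|(fg)'| + C\delta^{-1}|f'| \in L^p$, and the converse part of Theorem \ref{absolutely continuous} closes the argument.
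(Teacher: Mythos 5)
Your argument is correct in substance but follows a genuinely different route from the paper. The paper proves the lemma via the absolute-continuity-on-lines characterisation (Theorem \ref{absolutely continuous}): on almost every line parallel to a coordinate axis, $f$ and $fg$ are absolutely continuous, $|f|\geqslant\delta$ makes the quotient $g=(fg)/f$ absolutely continuous there, and the classical product/quotient rule gives a line derivative bounded by $\delta^{-1}|\nabla(fg)|+C\delta^{-1}|\nabla f|\in L^p$, after which the converse part of that theorem closes the argument. You instead work globally with translation differences: your quotient identity and the bounds $|1/f|\leqslant 1/\delta$, $|g|\leqslant C$ reduce the translation estimate for $g$ to those for $fg$ and $f$, and Proposition \ref{difference quotient} does the rest. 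The identity is verified correctly, the a.e.\ justification (translates of null sets are null) is exactly the right point to make, and the estimate is clean; indeed this is the same mechanism the paper itself uses elsewhere (e.g.\ to get $s\in W^{1,2}$ from the $\vQ$-difference bound in Theorem \ref{3:1D-Uniaxial-Ericksen}), so your proof arguably fits the surrounding toolkit more uniformly than the paper's own.

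One caveat is worth recording, because it is precisely what the paper's ACL route buys. The converse direction of the difference-quotient characterisation is genuinely false for $p=1$: uniformly bounded $L^1$ translation differences (even with the factor $|h|$, which is missing from \eqref{4.9b} as printed and should be restored for the statement to be meaningful) only yield $g\in BV$, not $g\in W^{1,1}$ --- a step function is the standard counterexample. Since the lemma is asserted for all $p\in[1,\infty)$, your main argument covers $p\in(1,\infty)$ but leaves $p=1$ resting on the paper's Proposition as stated rather than on a true theorem; the line-by-line argument you sketch at the end as a ``sanity check'' (which is essentially the paper's proof) is what actually handles $p=1$, so it should be promoted from a remark to the argument of record in that case.
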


\begin{proof}

In the spirit of using Theorem \ref{absolutely continuous}, we take some $x_0 \in \mathbb{R}^{d-1}$. Then without loss of generality we can assume that $f$ and $fg$ are absolutely continuous on the set 
\begin{equation}\label{4.8}
 U_{x_0} := \left\{ \, (t,x_0) \in \Omega \, | \, t \in \mathbb{R}\, \right\}.
\end{equation}
Hence we can use the standard facts that the reciprocal of a non-zero absolutely continuous function and the product of two absolutely continuous functions are still absolutely continuous to deduce that 
\begin{equation}\label{4.9}
 g_1(t,x_0) := \frac{(fg)(t,x_0)}{f(t,x_0)} 
\end{equation}
is absolutely continuous on $U_{x_0}$. Furthermore 
\begin{equation}\label{4.10}
 \frac{d}{dt}g_1(t,x_0) = \frac{ \frac{\partial}{\partial x_1} (fg)(x)}{f(x)} - \frac{g(x)\frac{\partial }{\partial x_1} f(x)}{f(x)}.
\end{equation}
Our assumptions imply that $\frac{1}{f},g \in L^\infty(\Omega)$ and $\nabla f, \nabla (fg) \in L^p (\Omega)$, hence \eqref{4.10} implies that
\begin{equation}\label{4.11}
 g'_1 \in L^p(\Omega).
\end{equation}
This logic applies for all line segments parallel to the coordinate axes, hence by Theorem \ref{absolutely continuous} $g\in W^{1,p} \left( \Omega \right)$.

\end{proof}

%
%
%
% Second Sobolev function lemma 
% 
% 
% 

\begin{lemma}\label{3:lemma6}
 Let $p \in [1,\infty)$ and $\Omega \subset \mathbb{R}^d$ be an open set. Suppose that $f \in W^{1,p}\left( \Omega \right)$, and $g:\Omega \setminus C_f \rightarrow \mathbb{R}$ are maps such that $fg \in W^{1,p}\left( \Omega \right)$ and $|g(x)|\leqslant D$ for almost every $x \in \Omega \setminus C_f$. Then
 \begin{equation}\label{4.19.1}
  g \in W^{1,p}_{loc}\left( \Omega \setminus C_f \right).
 \end{equation}

\end{lemma}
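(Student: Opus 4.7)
The plan is to reduce the conclusion to a local application of Lemma \ref{3:lemma2}. Since $C_f$ is closed, the set $\Omega \setminus C_f$ is open, and to establish $g \in W^{1,p}_{loc}(\Omega \setminus C_f)$ it suffices to show $g \in W^{1,p}(V)$ for every open set $V$ with $\bar V$ compact and contained in $\Omega \setminus C_f$.

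The key step is to extract a uniform positive lower bound for $f$ on such a set $V$. For each $x \in \bar V$, since $x \notin C_f = \bigcap_{\epsilon > 0} \overline{\left\{ f \leqslant \epsilon \right\}}$, there exists some $\epsilon_x > 0$ with $x \notin \overline{\left\{ f \leqslant \epsilon_x \right\}}$, and hence an open ball $B_x$ around $x$ on which the precise representative satisfies $f > \epsilon_x$. Compactness of $\bar V$ then furnishes a finite subcover $B_{x_1}, \dots, B_{x_N}$, and taking $\delta := \min_i \epsilon_{x_i} > 0$ yields $f \geqslant \delta$ pointwise throughout $V$.

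With this lower bound, the restrictions of $f$, $g$, and $fg$ to the open and bounded set $V$ satisfy all the hypotheses of Lemma \ref{3:lemma2}: $f|_V \in W^{1,p}(V)$ with $|f| \geqslant \delta > 0$, $(fg)|_V \in W^{1,p}(V)$, and $|g|\leqslant D$ almost everywhere on $V$. Invoking Lemma \ref{3:lemma2} directly gives $g \in W^{1,p}(V)$, and since $V$ was arbitrary this is exactly the conclusion $g \in W^{1,p}_{loc}(\Omega \setminus C_f)$.

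The only substantive obstacle is the covering argument above; once it is in place, the rest of the proof is purely a matter of invoking the previous lemma. This is precisely what distinguishes the present statement from Lemma \ref{3:lemma2}: no uniform lower bound for $f$ need hold on $\Omega$, but the definition \eqref{4.6.1} is tailored so that one does hold on any relatively compact subset of $\Omega \setminus C_f$, which is everything Lemma \ref{3:lemma2} requires.
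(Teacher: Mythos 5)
Your proof is correct and takes essentially the same approach as the paper: use the definition of $C_f$ together with compactness of the closure of a compactly contained subset to get a uniform positive lower bound for $f$ there, and then apply Lemma \ref{3:lemma2}. The only cosmetic difference is that you extract the bound via a finite subcover, while the paper argues by contradiction using the nested non-empty compact sets $\overline{U}\cap\overline{\left\{ f\leqslant \epsilon\right\}}$; the two are equivalent uses of compactness.
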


\begin{proof}
 
 We take some $U \subset \subset \Omega \setminus C_f$ and we just need to show that $|f(x)|$ is bounded below away from zero on $U$ in order to apply Lemma \ref{3:lemma2}. This is where the importance of the definition of $C_f$ becomes clear. We can use it to deduce that
 \begin{equation}\label{4.19.3}
 {\rm ess inf}_{x \in\overline{U}}\, |f(x)| > 0.
 \end{equation}
 If this were not the case then $\overline{U} \cap \left\{ f \leqslant \epsilon \right\}$ has positive measure for all $\epsilon>0$. Then $V_\epsilon := \overline{U} \cap \overline{ \left\{ f \leqslant \epsilon \right\} }$ is a nested sequence of non-empty closed sets. Thus its intersection contains a point. This contradicts $U \subset \subset \Omega \setminus C_f$. This puts us in the position where we can simply apply Lemma \ref{3:lemma2} to deduce that 
 \begin{equation}\label{4.19.4}
  g \in W^{1,p} \left( U \right) \Longrightarrow g \in W^{1,p}_{loc} \left( \Omega\setminus C_f \right).
 \end{equation}

\end{proof}

%
%
%
% Lemma 3 - Third Sobolev Space Lemma
%
%
%

\begin{lemma}\label{3:lemma3}
 Let $p\in [1,\infty)$ and $\Omega\subset \mathbb{R}^d$ be an open set. Suppose that $f,g:\Omega\rightarrow \mathbb{R}$ are two maps such that 
\begin{equation}\label{4.13}
f \in W^{1,p}\left(\Omega\right)\cap C(\Omega), \quad g\in W^{1,p}_{loc}\left(\Omega \setminus {\left\{f=0\right\}}\right)\,\, \text{and}\,\, fg\in W^{1,p}\left(  \Omega\setminus {\left\{f=0\right\}}\right) .
\end{equation}
Then $fg\in W^{1,p}\left(\Omega\right)$.

\end{lemma}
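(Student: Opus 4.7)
The approach is to use a cut-off approximation in the dependent variable $f$. Since $f=0$ on $\{f=0\}$, the pointwise product $fg$ vanishes on this set, so $fg\in L^p(\Omega)$ follows immediately from the hypothesis $fg\in L^p(\Omega\setminus\{f=0\})$; what remains is to produce a weak gradient for $fg$ on all of $\Omega$.

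Fix a smooth cut-off $\eta_\epsilon\in C^\infty(\mathbb{R})$ with $\eta_\epsilon\equiv 0$ on $[-\epsilon/2,\epsilon/2]$, $\eta_\epsilon\equiv 1$ off $[-\epsilon,\epsilon]$ and $|\eta_\epsilon'|\le C/\epsilon$, and set $\psi_\epsilon:=\eta_\epsilon\circ f\in W^{1,p}(\Omega)\cap L^\infty(\Omega)$. By continuity of $f$, $\psi_\epsilon$ vanishes on the open neighbourhood $\{|f|<\epsilon/2\}$ of $\{f=0\}$ and equals $1$ where $|f|\ge \epsilon$. Writing $U:=\Omega\setminus\{f=0\}$, the product $\psi_\epsilon fg$ is identically zero on $\{|f|<\epsilon/2\}$ and equals a bounded $W^{1,p}$ function times a $W^{1,p}(U)$ function on $\{|f|>\epsilon/4\}\subset U$; a gluing argument on this open cover of $\Omega$ gives $\psi_\epsilon fg\in W^{1,p}(\Omega)$. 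Dominated convergence then delivers $\psi_\epsilon fg\to fg$ in $L^p(\Omega)$ and $\psi_\epsilon\nabla(fg)\to\nabla(fg)$ in $L^p(U)$ (extended by zero) as $\epsilon\to 0$.

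The main obstacle is the Leibniz commutator
$fg\,\nabla\psi_\epsilon = [\,f\eta_\epsilon'(f)\,]\,g\nabla f$,
supported on the shrinking layer $\{\epsilon/2\le|f|\le\epsilon\}$, which must be shown to vanish in $L^p(\Omega)$. The factor $[f\eta_\epsilon'(f)]$ is bounded uniformly in $\epsilon$, and using the product-rule identity $g\nabla f=\nabla(fg)-f\nabla g$ valid on $U$, together with the bound $|f|\le\epsilon$ on the layer, reduces the required $L^p$-control to absolute continuity of the integrals of $|\nabla(fg)|^p$ (and, via a second application of the same identity, of $|\nabla g|^p$) on layers of Lebesgue measure tending to zero. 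Once this commutator estimate is in hand, completeness of $W^{1,p}(\Omega)$ and uniqueness of limits identify $\lim_\epsilon \psi_\epsilon fg$ as $fg$ in $W^{1,p}(\Omega)$, with weak derivative equal to $\nabla(fg)$ on $U$ extended by zero across $\{f=0\}$.
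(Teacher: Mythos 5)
Your truncation strategy ($\psi_\epsilon=\eta_\epsilon\circ f$, pass to the limit, identify the limit by closedness of the weak gradient) is a genuinely different route from the paper, which instead verifies the integration-by-parts identity for $h:=fg\,\chi_{\Omega\setminus\{f=0\}}$ on almost every line parallel to the axes, using absolute continuity on slices and the fact that at each endpoint of a component interval either $\psi$ vanishes (endpoint on $\partial\Omega$) or $f$ vanishes (endpoint on $\partial\{f=0\}$). However, your argument is incomplete at exactly the step you flag as the main obstacle, and the reduction you propose there does not work. Writing $g\nabla f=\nabla(fg)-f\nabla g$ only controls the first piece of the commutator: absolute continuity of $\int|\nabla(fg)|^p$ over the layers $\{\epsilon/2\le|f|\le\epsilon\}$, whose measure tends to zero, is indeed available. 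But the leftover piece $[f\eta_\epsilon'(f)]\,f\nabla g$ requires $\epsilon\,\|\nabla g\|_{L^p(\{\epsilon/2\le|f|\le\epsilon\})}\to0$, and there is no ``absolute continuity of $\int|\nabla g|^p$'' to invoke: $g$ is only assumed to lie in $W^{1,p}_{loc}$ away from $\{f=0\}$, so $|\nabla g|^p$ need not be integrable on any neighbourhood of the zero set, and a ``second application of the same identity'' merely returns you to the uncontrolled term $g\nabla f$. The same unbounded factor $g$ also undermines the fixed-$\epsilon$ claim $\psi_\epsilon fg\in W^{1,p}(\Omega)$, since the product-rule term $fg\,\nabla\psi_\epsilon$ is only pointwise dominated by $C|g|\,|\nabla f|$ there.

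This is not a removable technicality. Take $\Omega=(-1,1)$, $f(x)=x$, $g(x)=1/|x|$ (and $g(0)=0$): then $f\in W^{1,p}\cap C$, $g\in W^{1,p}_{loc}(\Omega\setminus\{0\})$, $fg=\mathrm{sgn}\in W^{1,p}(\Omega\setminus\{0\})$, yet your commutator $fg\,\nabla\psi_\epsilon=\mathrm{sgn}(x)\,\eta_\epsilon'(x)$ has $L^p$ norm of order $\epsilon^{1/p-1}$, which does not vanish, and indeed $fg=\mathrm{sgn}\notin W^{1,p}(\Omega)$. So under the literal hypotheses no argument can close your estimate; the missing ingredient is a bound $|g|\le C$, which is implicitly present wherever the paper applies the lemma ($g$ is a line field there). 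Once you add $g\in L^\infty$, your scheme closes, and more simply than you propose: $|fg\,\nabla\psi_\epsilon|\le C\|g\|_\infty|\nabla f|\,\chi_{\{\epsilon/2\le|f|\le\epsilon\}}\to0$ in $L^p$ by dominated convergence, the fixed-$\epsilon$ products are genuinely in $W^{1,p}(\Omega)$, and uniqueness of limits finishes the proof, giving a legitimate alternative to the paper's slicing argument.
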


\begin{proof}

For ease of notation we let $\Omega':=\Omega\setminus {\left\{f=0\right\}}$. If we let $h:\Omega \rightarrow \mathbb{R}$ be defined by
\begin{equation}\label{4.14}
h(x):= \left\{ \begin{array}{lcl} f(x)g(x) && \text{if  }x\in \Omega'\\
0 && \text{otherwise} \end{array} \right. .
\end{equation}
Then clearly we have $h,\nabla h \in L^{p}\left( \Omega \right)$ where
\begin{equation}\label{4.15}
\nabla h :=  \left\{ \begin{array}{lcl} g(x)\nabla f(x) + f(x)\nabla g(x) && \text{if  }x\in \Omega'\\
0 && \text{otherwise} \end{array} \right. .
\end{equation}
Therefore we just need to show that the integration by parts formula holds in order to deduce the result. We take a $\psi \in C^\infty_0 \left( \Omega \right)$ and we need to show that 
\begin{equation}\label{4.16}
\int_{\Omega'} f(x)g(x)\frac{\partial \psi}{\partial x_j}(x)\,dx=-\int_{\Omega'} \frac{\partial}{\partial x_j}\left[f(x)g(x)\right]\psi(x)\, dx.
\end{equation}
We know that Sobolev functions are absolutely continuous on almost every line segment parallel to the coordinate axes (Theorem \ref{absolutely continuous}). Therefore our assumptions imply that $fg$ is absolutely continuous on $\Omega'_{y_0}:=\Omega'\cap \left\{\, (x,y_0)\,|\,x\in \mathbb{R}\, \right\}$ for almost every $y_0 \in \mathbb{R}^{d-1}$. Then by splitting up $\Omega'_{y_0}$ into its connected components $U_i$, which will be open 1-dimensional intervals of the form $(a_i,b_i)$, we see that 
\begin{equation}\label{4.17}
\begin{split}
\int_{\Omega'_{y_0}} \left( fg\right) \psi_{,j}\,dx & = \sum_i \int_{U_i} \left( fg \right) \psi_{,j}\,dx \\
&= -\int_{\Omega'_{y=0}}\left(fg \right)_{,j} \psi\,dx+\sum_i \left[fg\psi(b_i,y_0)-fg\psi(a_i,y_0) \right].
\end{split} 
\end{equation}
Here we have two possibilities. If $(a_i,y_0)\in \partial\Omega$ then $\psi(a_i,y_0)=0$ since $\psi \in C^\infty_0\left( \Omega \right)$. Alternatively if $(a_i,y_0)\in \partial {\left\{ f=0 \right\}}$ then $f(a_i,y_0)=0$. Therefore in either case we see that each element of the sum in \eqref{4.17} must be zero. Hence
\begin{equation}\label{4.18}
\int_{\Omega'_{y_0}} \left( fg\right) \psi_{,j}\,dx = -\int_{\Omega'_{y=0}}\left(fg \right)_{,j} \psi\,dx,
\end{equation}
for almost every $y_0 \in \mathbb{R}^{d-1}$, so by performing the remaining $d-1$ integrations we immediately find \eqref{4.16}. Therefore we have shown
\begin{equation}\label{4.19}
 \int_\Omega h\psi_{,j} \,dx=-\int_\Omega h_{,j}\psi\,dx \quad \forall \psi \in C^\infty_0(\Omega), \,\, j=1,\dots,n.
\end{equation}
Hence $h\in W^{1,p}\left( \Omega \right)$ and $h=fg$ $\mathcal{L}^n$ almost everywhere which gives us our conclusion.

\end{proof}

%
%
%
% Lemma 4 - First Q-Tensor inequality
%
%
%

\begin{lemma}\label{3:lemma4}
Using the Frobenius matrix norm $|A|^2 = \sum_{i,j} a_{ij}^2$, we have the following inequality.
\begin{equation}\label{4.20}
\left|s_1\left( \vn\otimes\vn-\frac{1}{3}Id\right)-s_2 \left( \vm\otimes\vm-\frac{1}{3}Id \right) \right|^2\geqslant \frac{1}{6}\left| s_1-s_2 \right|^2
\end{equation}
for any $s_1,s_2 \in \mathbb{R}$ and $\vn,\vm \in \mathbb{S}^2$.

\end{lemma}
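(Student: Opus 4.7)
The plan is to expand the left-hand side of \eqref{4.20} via the Frobenius inner product and reduce the claim to a positive semidefinite quadratic form in $(s_1,s_2)$ whose coefficients depend only on the single scalar $\alpha:=(\vn\cdot\vm)^2 \in [0,1]$.

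First I would compute the three elementary inner products that appear. Using $|\vn|=|\vm|=1$ one has
\begin{equation*}
\left|\vn\otimes\vn - \tfrac{1}{3}Id\right|^2 = \tfrac{2}{3}, \qquad \left|\vm\otimes\vm - \tfrac{1}{3}Id\right|^2 = \tfrac{2}{3},
\end{equation*}
and
\begin{equation*}
\left\langle \vn\otimes\vn - \tfrac{1}{3}Id,\; \vm\otimes\vm - \tfrac{1}{3}Id\right\rangle = (\vn\cdot\vm)^2 - \tfrac{1}{3} = \alpha - \tfrac{1}{3}.
\end{equation*}
Substituting these three scalars into $|A|^2 = \langle A,A\rangle$, where $A$ denotes the matrix on the left-hand side of \eqref{4.20}, yields
\begin{equation*}
|A|^2 = \tfrac{2}{3}s_1^2 + \tfrac{2}{3}s_2^2 + \bigl(\tfrac{2}{3} - 2\alpha\bigr)s_1 s_2.
\end{equation*}

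Next I would change variables to $u:=s_1+s_2$, $v:=s_1-s_2$, which is tailored to isolate the quantity $(s_1-s_2)^2 = v^2$ appearing on the right-hand side. A direct rearrangement produces the diagonal form
\begin{equation*}
|A|^2 = \frac{1-\alpha}{2}\,u^2 + \frac{1+3\alpha}{6}\,v^2.
\end{equation*}

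Since $\alpha\in[0,1]$, the coefficient of $u^2$ is non-negative and the coefficient of $v^2$ is bounded below by $\tfrac{1}{6}$. Discarding the non-negative $u^2$ contribution immediately yields $|A|^2 \geq \tfrac{1}{6}v^2 = \tfrac{1}{6}(s_1-s_2)^2$, which is precisely \eqref{4.20}. No step poses a genuine obstacle; only the Frobenius expansion and the bookkeeping of cross terms require any care, after which the inequality collapses to a one-line algebraic observation.
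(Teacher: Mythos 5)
Your proposal is correct, and the core computation is the same as the paper's: both expand the Frobenius norm using $|\vn\otimes\vn-\tfrac{1}{3}Id|^2=|\vm\otimes\vm-\tfrac{1}{3}Id|^2=\tfrac{2}{3}$ and the cross term $(\vn\cdot\vm)^2-\tfrac{1}{3}$, arriving at the identical quadratic $\tfrac{2}{3}(s_1^2+s_2^2)-2s_1s_2\bigl((\vn\cdot\vm)^2-\tfrac{1}{3}\bigr)$. The only divergence is the final step: the paper splits into the cases $s_1s_2\geqslant 0$ and $s_1s_2<0$, bounding the cross term using $(\vn\cdot\vm)^2\leqslant 1$ in the first case and $(\vn\cdot\vm)^2\geqslant 0$ in the second, whereas you diagonalise once in the variables $u=s_1+s_2$, $v=s_1-s_2$ to get $\tfrac{1-\alpha}{2}u^2+\tfrac{1+3\alpha}{6}v^2$ with $\alpha=(\vn\cdot\vm)^2\in[0,1]$, which handles both signs uniformly; indeed the paper's two case bounds are exactly your formula at the endpoints $\alpha=1$ and $\alpha=0$. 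Your finish is marginally cleaner (no case analysis and it exhibits the sharp $\alpha$-dependent constant), while the paper's is more elementary bookkeeping; both are complete and correct.
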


\begin{proof}

The proof is simply by direct calculation. When we multiply out the left hand side of \eqref{4.20} we obtain
\begin{equation}\label{4.21}
\begin{split}
LHS
&=\sum_{i,j} s_1^2\left( \vn_i\vn_j-\frac{1}{3}\delta_{i,j}\right)\left( \vn_i\vn_j-\frac{1}{3}\delta_{i,j}\right)+s_2^2\left( \vm_i\vm_j-\frac{1}{3}\delta_{i,j}\right)\left( \vm_i\vm_j-\frac{1}{3}\delta_{i,j}\right)\\
&-\sum_{i,j} 2s_1s_2\left( \vn_i\vn_j-\frac{1}{3}\delta_{i,j}\right)\left( \vm_i\vm_j-\frac{1}{3}\delta_{i,j}\right) \\
& = \frac{2}{3}\left(s_1^2+s_2^2\right) -2s_1s_2\left( (\vn\cdot\vm)^2-\frac{1}{3}\right).
\end{split}
\end{equation}
Now we split the proof into the two cases of $s_1s_2\geqslant 0$ and $s_1s_2<0$ and show that we have the result in either situation. If we are in the first case then
\begin{equation}\label{4.22}
LHS\geqslant \frac{2}{3}\left( s_1^2+s_2^2 \right) -\frac{4s_1s_2}{3}=\frac{2}{3}\left( s_1-s_2\right)^2\geqslant \frac{1}{6} \left( s_1-s_2\right)^2.
\end{equation}
In the second case we have 
\begin{equation}\label{4.23}
\begin{split}
LHS&
\geqslant \frac{2}{3}\left(s_1^2+s_2^2\right) +\frac{2s_1s_2}{3} \\
&=\frac{2}{3}\left(s_1^2+s_2^2\right) +\frac{2s_1s_2}{3}-\frac{1}{6}\left(s_1-s_2\right)^2+\frac{1}{6}\left(s_1-s_2\right)^2\\
&=\frac{1}{2} \left(s_1+s_2\right)^2+\frac{1}{6}\left(s_1-s_2\right)^2\\
&\geqslant \frac{1}{6}\left(s_1-s_2\right)^2.
\end{split}
\end{equation}

\end{proof}

%
%
%
% Lemma 5 - Second Q-Tensor inequality 
%
%
%

\begin{lemma}\label{3:lemma5}
 Suppose that 
 \begin{equation}\label{4.24}
  \begin{split}
   \vQ_1=s_1\left( \vn_1\otimes \vn_1-\frac{1}{3}I\right)+s_2\left( \vn_2\otimes \vn_2-\frac{1}{3}I\right) \\
   \vQ_2=t_1\left( \vm_1\otimes \vm_1-\frac{1}{3}I\right)+t_2\left( \vm_2\otimes \vm_2-\frac{1}{3}I\right)
  \end{split}
 \end{equation}
 for $s_1,t_1\geqslant 0$, $s_2,t_2\leqslant 0$ and $\vn_1,\vn_2,\vm_1,\vm_2 \in \mathbb{S}^2$. Then 
 \begin{equation}\label{4.25}
  |\vQ_1-\vQ_2|^2 \geqslant \frac{1}{3}\left[(s_1-t_1)^2(s_2-t_2)^2\right]+s_1t_1\left|\vn_1\otimes \vn_1-\vm_1\otimes \vm_1\right|^2+s_2t_2 \left|\vn_2\otimes\vn_2-\vm_2\otimes\vm_2\right|^2.
 \end{equation}

\end{lemma}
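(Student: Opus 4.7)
The plan is to verify \eqref{4.25} by direct expansion, exploiting the following identities for $\vn,\vm\in\mathbb{S}^2$, each of which follows by taking traces and using $|\vn|=|\vm|=1$:
\[
|\vn\otimes\vn-\tfrac{1}{3}I|^2=\tfrac{2}{3},\qquad (\vn\otimes\vn-\tfrac{1}{3}I):(\vm\otimes\vm-\tfrac{1}{3}I)=(\vn\cdot\vm)^2-\tfrac{1}{3},
\]
together with $|\vn\otimes\vn-\vm\otimes\vm|^2=2-2(\vn\cdot\vm)^2$, which will be used to rewrite the line-field terms on the right hand side.

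First I would expand $|\vQ_1-\vQ_2|^2=|\vQ_1|^2+|\vQ_2|^2-2\,\vQ_1:\vQ_2$ using the decomposition \eqref{4.24}, producing a polynomial in $s_1,s_2,t_1,t_2$ whose coefficients are either pure constants or squared inner products of the form $(\vn_i\cdot\vn_j)^2$, $(\vm_i\cdot\vm_j)^2$, or $(\vn_i\cdot\vm_j)^2$. Doing the same for the right hand side of \eqref{4.25} and subtracting term by term, the purely polynomial (constant-coefficient) part collapses into the perfect square
\[
\tfrac{1}{3}\bigl[(s_1-t_1)-(s_2-t_2)\bigr]^2\geq 0.
\]
This is the mechanism by which the sharper constant $\tfrac{1}{3}$ appears, rather than the $\tfrac{1}{6}$ one would obtain from Lemma \ref{3:lemma4} applied separately to each uniaxial piece. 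The remainder of LHS$-$RHS is exactly
\[
2s_1s_2(\vn_1\cdot\vn_2)^2+2t_1t_2(\vm_1\cdot\vm_2)^2-2s_1t_2(\vn_1\cdot\vm_2)^2-2s_2t_1(\vn_2\cdot\vm_1)^2.
\]

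To handle this remainder I would invoke the biaxial orthogonality $\vn_1\perp\vn_2$ and $\vm_1\perp\vm_2$ implicit in writing an arbitrary traceless symmetric matrix uniquely in the canonical form \eqref{4.24}; this kills the first two terms outright. The sign hypotheses $s_1,t_1\geq 0$ and $s_2,t_2\leq 0$ then give $-s_1t_2\geq 0$ and $-s_2t_1\geq 0$, so the two surviving cross terms are non-negative. Adding back the perfect square yields the claim.

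The main obstacle is really only the bookkeeping: the algebra has many terms and sign errors are easy to make, particularly in keeping track of the eight products $s_it_j$ and the six inner products $(\vn_i\cdot\vm_j)^2$. The one conceptual step is recognising the perfect square $[(s_1-t_1)-(s_2-t_2)]^2$ hiding in the scalar-coefficient part of LHS$-$RHS, since it is this identification, rather than the Cauchy--Schwarz type estimates behind Lemma \ref{3:lemma4}, that delivers the coefficient $\tfrac{1}{3}$.
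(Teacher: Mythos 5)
Your proof is correct and follows essentially the same calculation as the paper's: both identify the perfect square $(s_1-s_2-t_1+t_2)^2$ in the scalar part (the paper's \eqref{4.27} writes it as one of three squares and then discards it) and both dispose of the mixed $s_1t_2$, $s_2t_1$ terms using the sign hypotheses on the scalar order parameters. One helpful refinement in your write-up is that you make explicit the orthogonality $\vn_1\perp\vn_2$, $\vm_1\perp\vm_2$ needed to kill the $s_1s_2$ and $t_1t_2$ cross terms — the paper uses this silently in its expansion \eqref{4.26} (the term $-\tfrac{2}{3}(s_1s_2+t_1t_2)$ there presupposes $(\vn_1\cdot\vn_2)^2=(\vm_1\cdot\vm_2)^2=0$) even though the lemma's stated hypotheses do not mention it, and without it the inequality can fail (take $s_1=-s_2=M$, $\vn_1=\vn_2$, $t_1=t_2=0$).
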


\begin{proof}
 
 As in the previous lemma, \eqref{4.25} is understood using the Frobenius matrix norm $|A|^2=\sum_{i,j}a_{i,j}^2$. We multiply out the left hand side of \eqref{4.25} to find 
 \begin{equation}\label{4.26}
  \begin{split}
   \left| \vQ_1-\vQ_2\right|^2 &
   = \sum_{i,j} \left[ s_1\left(\vn_{1 i}\vn_{1 j}-\frac{1}{3}\delta_{i,j}\right) + s_2\left(\vn_{2 i}\vn_{2 j}-\frac{1}{3}\delta_{i,j}\right) - t_1\left(\vm_{1 i}\vm_{1 j}-\frac{1}{3}\delta_{i,j}\right) - t_2\left(\vm_{2 i}\vm_{2 j}-\frac{1}{3}\delta_{i,j}\right)\right]^2 \\
   &=\frac{2}{3} \left( s_1^2+s_2^2+t_1^2+t_2^2\right)-\frac{2}{3}(s_1s_2+t_1t_2)-2\sum_{i,j=1}^2 s_it_j\left( (\vn_i\cdot\vm_j)^2-\frac{1}{3}\right).
  \end{split}
 \end{equation}
 However we know that $s_1t_2,s_2t_1\leqslant 0$ and for any two unit vectors $\vn$ and $\vm$, $|\vn\otimes\vn-\vm\otimes\vm|^2=2-2(\vn\cdot\vm)^2$. These two facts allow us to estimate the above expression and complete the assertion with the following calculation
 \begin{equation}\label{4.27}
  \begin{split}
   \left| \vQ_1-\vQ_2\right|^2 & 
   \geqslant \frac{2}{3}\left( s_1^2+s_2^2+t_1^2+t_2^2-s_1s_2-t_1t_2\right)+\frac{2}{3}\left( s_1t_2+s_2t_1\right) \\
   &- 2s_1t_1\left( (\vn_1\cdot\vm_1)^2-\frac{1}{3}\right)-2s_2t_2\left( (\vn_2\cdot\vm_2)^2-\frac{1}{3}\right) \\
   & = \frac{2}{3}\left( s_1^2+s_2^2+t_1^2+t_2^2-s_1s_2-t_1t_2+s_1t_2 + s_2t_1 -2s_1t_1 -2s_2t_2\right) \\
   &+s_1s_2\left|\vn_1\otimes\vn_1-\vm_1\otimes\vm_1\right|^2+t_1t_2|\vn_2\otimes\vn_2-\vm_2\otimes\vn_2|^2 \\
   &= \frac{1}{3}\left( (s_1-t_1)^2+(s_2-t_2)^2+(s_1-s_2-t_1+t_2)^2\right)\\
   &+s_1s_2\left|\vn_1\otimes\vn_1-\vm_1\otimes\vm_1\right|^2+t_1t_2|\vn_2\otimes\vn_2-\vm_2\otimes\vn_2|^2 \\
   &\geqslant \frac{1}{3}\left[ (s_1-t_1)^2+(s_2-t_2)^2\right]+s_1t_1\left|\vn_1\otimes \vn_1-\vm_1\otimes \vm_1\right|^2+s_2t_2 \left|\vn_2\otimes\vn_2-\vm_2\otimes\vm_2\right|^2.
  \end{split}
 \end{equation}

\end{proof}

%
%
%
% END OF LEMMAS 
% 
%
%

%
%
%
% 1-dimensional complete results : 
% Line field to vector field result 
% Corollary: Oseen-Frank, Q-Tensor equivalence
% 
%
%

\section{One-dimensional Results}\label{sec:1D results}

We are now in a position to begin proving our equivalence results. However we must first prove the relevant results about the lifting of line fields to vector fields in one dimension. As the following proposition makes clear, in one-dimensional Sobolev spaces we can find a vector field lifting with the same regularity as the original line field. For the lifting results in simply-connected domains in higher dimensions, we will be building on the results of Ball \& Zarnescu \cite{ball2011orientability}.

\begin{proposition}\label{3:prop1}
 Suppose $\Omega\subset \mathbb{R}$ is a bounded domain and that $p\in [1,\infty)$. If 
 \begin{equation}\label{4.28}
  \vn\otimes\vn \in W^{1,p}_{loc}\left( \Omega,\mathbb{R}^{3\times 3}\right) 
 \end{equation}
 where $|\vn|=1$ then there exists some 
 \begin{equation}\label{4.29}
  \vm \in W^{1,p}_{loc}\left( \Omega,\mathbb{S}^2\right) 
 \end{equation}
 such that $\vm\otimes \vm=\vn \otimes \vn$ on $\Omega$.

\end{proposition}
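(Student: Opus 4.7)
The plan is to exploit two features specific to the one-dimensional setting. First, by the Sobolev embedding $W^{1,p}_{loc}(\Omega) \hookrightarrow C_{loc}(\Omega)$ in dimension one, the matrix field $Q := \vn\otimes\vn$ is continuous on $\Omega$ and takes values in the set of rank-one orthogonal projections of $\mathbb{R}^3$. This target is naturally identified with $\mathbb{RP}^2$, and the map $\vm \mapsto \vm\otimes\vm$ exhibits $\mathbb{S}^2$ as its universal double cover. Since $\Omega \subset \mathbb{R}$ is a bounded domain, hence a simply connected open interval, standard covering space theory produces a continuous lift $\vm \colon \Omega \to \mathbb{S}^2$ satisfying $\vm\otimes\vm = Q$ throughout $\Omega$.

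The remaining task is to upgrade continuity to local Sobolev regularity. Fix $x_0 \in \Omega$ and choose an index $i \in \{1,2,3\}$ with $m_i(x_0) \neq 0$, which exists because $|\vm(x_0)| = 1$. By continuity there is an open interval $V \ni x_0$ and a constant $\delta > 0$ with $|m_i| \geqslant \delta$ and constant sign on $V$. On $V$ the pointwise identities
\begin{equation}
 m_i^2 = Q_{ii}, \qquad m_i m_j = Q_{ij} \quad (j \neq i)
\end{equation}
are available. Since $Q_{ii} \in W^{1,p}(V)$ is bounded below by $\delta^2$ on $V$, the composition $m_i = \pm\sqrt{Q_{ii}}$ lies in $W^{1,p}(V)$ by the chain rule, the square root being smooth on $[\delta^2,\infty)$. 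Having obtained $m_i \in W^{1,p}(V)$ with $|m_i| \geqslant \delta$, $|m_j| \leqslant 1$, and $m_i m_j = Q_{ij} \in W^{1,p}(V)$, Lemma \ref{3:lemma2} applied with $f = m_i$ and $g = m_j$ yields $m_j \in W^{1,p}(V)$ for every $j \neq i$. Hence $\vm \in W^{1,p}(V,\mathbb{S}^2)$, and since $x_0 \in \Omega$ was arbitrary this gives $\vm \in W^{1,p}_{loc}(\Omega,\mathbb{S}^2)$.

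The hardest conceptual point is producing the continuous lift, although in one dimension it is cheap: continuity is supplied for free by the Sobolev embedding, and the simply connected domain permits a globally coherent sign choice via covering space theory. Once continuity is secured, the Sobolev regularity follows routinely from the chain rule for Sobolev functions bounded away from zero combined with Lemma \ref{3:lemma2}. Both of these one-dimensional simplifications, namely automatic continuity and triviality of the first homotopy group, break down in higher dimensions, which is precisely why Propositions \ref{3:prop2 Ball-Zarnescu Extension} and \ref{3:prop3 Line field Vector field SBV} require substantially more machinery.
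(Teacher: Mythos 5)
Your proof is correct, though the regularity step takes a genuinely different route from the paper. For the lifting itself the two arguments are essentially the same: the paper picks an orientation at one point and propagates it by uniform continuity on compact subsets, which is exactly the elementary content of your covering-space lift $\Omega \to \mathbb{S}^2$ along the double cover of $\mathbb{RP}^2$; in dimension one both reduce to the observation that a continuous sign choice can be made globally on an interval. Where you diverge is in upgrading continuity to $W^{1,p}_{loc}$. The paper does this by a direct algebraic identity on difference quotients: writing out $\left[\frac{\vm_i\vm_j(x+h)-\vm_i\vm_j(x)}{h}\right]\left[\frac{\vm_j(x+h)+\vm_j(x)}{2}\right]$, summing over $j$, and using $\sum_j \vm_j^2 \equiv 1$ to kill the unwanted term, one gets that the difference quotients of $\vm_i$ converge a.e.\ to $\sum_j(\vn_i\vn_j)'\vm_j \in L^p_{loc}$, and hence $\vm\in W^{1,p}_{loc}$ without invoking any auxiliary lemma. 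Your argument instead extracts one coordinate by a Lipschitz composition, $m_i = \pm\sqrt{Q_{ii}} \in W^{1,p}(V)$ where $Q_{ii}\geqslant\delta^2$, and then deploys Lemma \ref{3:lemma2} with $f=m_i$, $g=m_j$ to recover the other two components. Both are valid. The paper's method is self-contained and produces the formula for $\vm'$ in closed form; yours is perhaps more modular, since it reuses a lemma already established and highlights the general heuristic that the lift inherits Sobolev regularity wherever some component of $\vm$ stays bounded away from zero — a heuristic that is also what drives Lemma \ref{3:lemma6} in the higher-dimensional results.
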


\begin{proof}
 
 Through the Sobolev embedding theorem our assumptions imply
 \begin{equation}\label{4.30}
  \vn\otimes \vn \in C\left( \overline{U},\mathbb{R}^{3\times 3}\right).
 \end{equation}
 for any open set $U\subset \subset \Omega$. This certainly implies that $\vn\otimes \vn \in C\left(\Omega,\mathbb{R}^{3\times 3}\right)$. Therefore if we take some $x\in \Omega$, it is contained in some $U\subset\subset \Omega$. Then we choose $\vm(x)$ to be one of the two possible unit vectors to ensure that $\vm(x)\otimes \vm(x)=\vn\otimes \vn(x)$. The uniform continuity of $\vn \otimes \vn$ on $\overline{U}$ then ensures that this one choice at $x\in \Omega$ defines a mapping $\vm \in C\left( \overline{U},\mathbb{S}^2\right)$. Hence
 \begin{equation}\label{4.31}
  \vm \in C\left( \Omega,\mathbb{S}^2\right).
 \end{equation}
To prove its differentiability, we note that for $h>0$ we have the relation 
\begin{equation}\label{4.32}
\begin{split}
 &\left[ \frac{\vm_i(x+h)\vm_j(x+h)-\vm_i(x)\vm_j(x)}{h}\right]\left[ \frac{\vm_j(x+h)+\vm_j(x)}{2}\right] \\
 =& \frac{\vm_i(x+h)}{2}\left[ \frac{ \vm_j(x+h)^2-\vm_j(x)^2}{h}\right] +\vm_j(x)\left[ \frac{\vm_i(x+h)-\vm_i(x)}{h}\right]\left[\frac{\vm_j(x+h)+\vm_j(x)}{2}\right].
 \end{split}
\end{equation}
If we sum this expression over $j$ and take the limit as $h\rightarrow 0$, we find
\begin{equation}\label{4.33}
 \lim_{h\rightarrow 0} \frac{\vm_i(x+h)-\vm_i(x)}{h}=\sum_j (\vm_i\vm_j)'(x)\vm_j(x) = \sum_j (\vn_i\vn_j)'(x)\vm_j(x).
\end{equation}
Therefore the derivative of $\vm_i$ exists almost everywhere and is equal to  $\sum_j (\vn_i \vn_j)'(x)\vm_j(x) \in L^p_{loc}(\Omega)$, which implies 
\begin{equation}\label{4.34}
 \vm \in W^{1,p}_{loc}\left( \Omega,\mathbb{S}^2\right).
\end{equation}
 
\end{proof}

\begin{corollary}\label{3: corollary 1}
 Suppose $\Omega \subset \mathbb{R}$ is a bounded domain and that $p\in [1,\infty)$. If $\vn \otimes \vn \in W^{1,p}\left( \Omega ,\mathbb{R}^{3\times 3} \right)$ where $|\vn|=1$. Then there exists some $\vm \in W^{1,p}\left( \Omega ,\mathbb{S}^2 \right)$ such that $\vm\otimes \vm = \vn \otimes \vn$.
\end{corollary}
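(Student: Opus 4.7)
The plan is to bootstrap Proposition \ref{3:prop1} from local to global regularity, since the construction of the lifting $\vm$ is already carried out there and the bound $|\vm|=1$ together with boundedness of $\Omega$ immediately gives $\vm \in L^\infty(\Omega) \subset L^p(\Omega)$. Thus the only real task is to show that the weak derivative, which is already known to exist locally, actually lies in $L^p$ globally.

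The key ingredient for this step is formula \eqref{4.33} from the proof of Proposition \ref{3:prop1}, which furnishes the pointwise almost-everywhere identity
\begin{equation}
 \vm_i'(x) = \sum_j (\vn_i\vn_j)'(x)\,\vm_j(x).
\end{equation}
Since $|\vm(x)| = 1$ for a.e.\ $x$, this identity yields the pointwise estimate $|\vm'(x)| \leqslant C\,|\nabla(\vn\otimes\vn)(x)|$ for some universal constant $C$. Because $\vn\otimes\vn \in W^{1,p}(\Omega)$, the right-hand side belongs to $L^p(\Omega)$, and consequently the distributional derivative of $\vm$ (which agrees a.e.\ with the expression above on every $U \subset\subset \Omega$) is in $L^p(\Omega)$ globally.

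To conclude, I would verify the integration by parts formula on all of $\Omega$ using a standard test-function argument: given $\psi \in C^\infty_0(\Omega)$, the support of $\psi$ lies in some $U \subset\subset \Omega$ and on $U$ we have $\vm \in W^{1,p}(U)$, so
\begin{equation}
 \int_\Omega \vm\,\psi'\,dx = \int_U \vm\,\psi'\,dx = -\int_U \vm'\,\psi\,dx = -\int_\Omega \vm'\,\psi\,dx,
\end{equation}
where $\vm'$ denotes the a.e.-defined function from \eqref{4.33} (extended arbitrarily on a null set). Combining with the global $L^p$ bound from the previous step gives $\vm \in W^{1,p}(\Omega,\mathbb{S}^2)$. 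There is no genuine obstacle here; the only mildly delicate point is to be explicit about why the locally-defined weak derivative and the pointwise expression \eqref{4.33} coincide a.e., which is clear because the derivation of \eqref{4.33} in Proposition \ref{3:prop1} was purely pointwise and independent of the choice of subdomain.
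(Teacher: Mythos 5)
Your argument is correct and is exactly the intended meaning of the paper's terse remark that the corollary ``immediately follows from the proof of Proposition \ref{3:prop1}'': one runs the same construction, observes that \eqref{4.33} gives $|\vm'(x)|\leqslant C\,|\nabla(\vn\otimes\vn)(x)|$ with the right-hand side now in $L^p(\Omega)$ globally rather than just locally, and notes $\vm\in L^\infty\subset L^p$ on the bounded interval. Nothing further is needed, and your integration-by-parts check is a sound (if slightly belt-and-braces) way of making explicit that the locally defined weak derivative is the global one.
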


This corollary immediately follows from the proof of Proposition \ref{3:prop1}. This lifting result together with the lemmas from section \ref{sec:lemmas} mean we can now move ahead and prove our two equivalence results in one dimension.

%
%
%
% 1-D Uniaxial Result
%
%
%

\subsection{Uniaxial Landau-de Gennes}

\begin{theorem}\label{3:1D-Uniaxial-Ericksen}
Suppose $\Omega\subset \mathbb{R}$ is a bounded domain. Then minimising $I_L(\vQ)$ over 
\begin{equation}\label{4.35}
 \mathcal{A}:=\left\{\,\left. \vQ \in W^{1,2}\left( \Omega,\mathbb{R}^{3\times 3} \right) \,\right|\, \vQ^T = \vQ,\,\, {\rm Tr}\,\vQ=0,\,\, \vQ\, \text{uniaxial}\,\right\},
\end{equation}
is equivalent to minimising $I_E(\vn,s)$ over 
\begin{equation}\label{4.36}
 \mathcal{B}:=\left\{  s\in W^{1,2}\left( \Omega\right),\,\, \vn\in W^{1,2}_{loc}\left( \Omega\setminus \left\{s=0\right\},\mathbb{S}^2\right)\,\left| \, \int_\Omega s^2|\nabla \vn|^2 \, dx < \infty \right. \right\}.
\end{equation}

\end{theorem}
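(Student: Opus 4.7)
The strategy is to set up an explicit energy-preserving correspondence between $\mathcal{A}$ and $\mathcal{B}$ via the uniaxial formula $\vQ = s(\vn\otimes\vn - \tfrac{1}{3}I)$, verify that both classes are preserved under this correspondence, and check that the integrands pointwise satisfy $I_L(\vQ) = I_E(\vn,s)$. The two minimisation problems then have identical infima and minimisers in bijection (modulo the pointwise sign ambiguity of $\vn$).

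\textbf{From $\mathcal{A}$ to $\mathcal{B}$.} Given $\vQ\in \mathcal{A}$, define $s(x)$ as $\tfrac{3}{2}$ times the distinct eigenvalue of $\vQ(x)$ (set to $0$ wherever $\vQ(x)=0$), and choose any $\vn(x)\in \mathbb{S}^2$ so that $\vQ(x) = s(x)(\vn(x)\otimes\vn(x) - \tfrac{1}{3}I)$ pointwise. In $d=1$ Sobolev embedding makes $\vQ$ continuous, so $s$ is continuous and $\Omega' := \Omega\setminus\{s=0\}$ is open. To show $s\in W^{1,2}(\Omega)$, I would apply Lemma~\ref{3:lemma4} pointwise to $\vQ(x+h)$ and $\vQ(x)$, giving
\begin{equation*}
|s(x+h) - s(x)|^2 \leqslant 6\,|\vQ(x+h)-\vQ(x)|^2,
\end{equation*}
and combine this with Proposition~\ref{difference quotient} applied to $\vQ$. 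On $\Omega'$ the identity $\vn\otimes\vn - \tfrac{1}{3}I = \vQ/s$ lets me apply Lemma~\ref{3:lemma6} entrywise (with $f = s$, $g$ a bounded entry of $\vn\otimes\vn - \tfrac{1}{3}I$, and $fg$ the corresponding $W^{1,2}$ entry of $\vQ$), yielding $\vn\otimes\vn \in W^{1,2}_{loc}(\Omega')$. Then Proposition~\ref{3:prop1} furnishes the required lifting $\vn\in W^{1,2}_{loc}(\Omega',\mathbb{S}^2)$.

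\textbf{From $\mathcal{B}$ to $\mathcal{A}$.} Given $(\vn,s)\in\mathcal{B}$, set $\vQ := s(\vn\otimes\vn - \tfrac{1}{3}I)$. The plan is to apply Lemma~\ref{3:lemma3} entrywise: $s\in W^{1,2}(\Omega)\cap C(\Omega)$ in one dimension, each entry of $\vn\otimes\vn-\tfrac{1}{3}I$ is bounded and lies in $W^{1,2}_{loc}(\Omega\setminus\{s=0\})$, and on $\Omega\setminus\{s=0\}$ the product has weak gradient
\begin{equation*}
\nabla s\,(\vn\otimes\vn - \tfrac{1}{3}I) + s(\nabla\vn\otimes\vn + \vn\otimes\nabla\vn),
\end{equation*}
whose $L^2$ norm is controlled by $\|\nabla s\|_2$ and $\|\,s|\nabla\vn|\,\|_2$, both finite by the definition of $\mathcal{B}$. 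Lemma~\ref{3:lemma3} then extends $\vQ$ across $\{s=0\}$ to a member of $W^{1,2}(\Omega)$, and $\vQ$ is symmetric, traceless and uniaxial by construction.

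\textbf{Energy identity and main obstacle.} A direct pointwise computation using $|\vn|=1$ (so that $\vn\cdot\nabla\vn = 0$ where it is meaningful) yields $|\nabla\vQ|^2 = \tfrac{2}{3}|\nabla s|^2 + 2s^2|\nabla\vn|^2$, $|\vQ|^2 = \tfrac{2s^2}{3}$, $\vQ\cdot\nabla\times\vQ = 2s^2\,\vn\cdot\nabla\times\vn$, and $\psi_B(\vQ) = \sigma(s)$ by \eqref{4.5}; integration then gives $I_L(\vQ) = I_E(\vn,s)$, with the $\vn$-dependent terms interpreted on $\Omega'$ and harmlessly multiplied by $s^2$ elsewhere. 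The main obstacle is extracting a well-defined $s\in W^{1,2}(\Omega)$ with the correct sign from $\vQ$ without doing a delicate local eigenvalue analysis across $\{\vQ = 0\}$; Lemma~\ref{3:lemma4} sidesteps this by controlling $s$ directly through difference quotients. The one-dimensional setting is essential for both the continuity of $s$ and for the success of the lifting in Proposition~\ref{3:prop1}, which is precisely why the higher-dimensional analogues will require the Ball--Zarnescu lifting and ultimately the SBV framework.
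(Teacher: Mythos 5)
Your proposal is correct and follows essentially the same route as the paper: Lemma~\ref{3:lemma4} plus the difference-quotient characterisation (Proposition~\ref{difference quotient}) to get $s\in W^{1,2}(\Omega)$, then Lemma~\ref{3:lemma2}/\ref{3:lemma6} (which coincide here since $s$ is continuous in one dimension, so $C_s=\{s=0\}$) to extract $\vn\otimes\vn\in W^{1,2}_{loc}$ away from $\{s=0\}$, Proposition~\ref{3:prop1} for the lifting, and Lemma~\ref{3:lemma3} for the converse, all tied together by the pointwise Lagrangian identity. The one small omission is that in passing from $\mathcal{A}$ to $\mathcal{B}$ you should record that the joint-regularity condition $\int_\Omega s^2|\nabla\vn|^2\,dx<\infty$ holds; as the paper observes at \eqref{4.42.1}, this falls out immediately from $\int_\Omega |\nabla(s\,\vn\otimes\vn)|^2\,dx=\int_\Omega 2s^2|\nabla\vn|^2+|\nabla s|^2\,dx<\infty$, which is finite because $s\,\vn\otimes\vn=\vQ+\tfrac{1}{3}sI\in W^{1,2}(\Omega,\mathbb{R}^{3\times 3})$.
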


\begin{proof}
 
 The main idea of the proof, as with each of the subsequent equivalence results, is to show a one-to-one correspondence between the sets of admissible functions. A quick formal calculation can then show that the two Lagrangians are in fact equal so that the minimisation problems are equivalent. We begin by taking some $\vQ \in \mathcal{A}$. Then we know that this matrix has the form 
 \begin{equation}\label{4.37}
  \vQ(x)=s(x)\left( \vn\otimes\vn(x) -\frac{1}{3}I\right) 
 \end{equation}
 for some $|\vn|=1$. We apply Lemma \ref{3:lemma4} to deduce that 
 \begin{equation}\label{4.38}
  |\vQ(x)-\vQ(y)|^2\geqslant \frac{1}{6} |s(x)-s(y)|^2,
 \end{equation}
 for any $x,y \in \Omega$. This immediately implies that $s\in L^2\left( \Omega\right)$ and to show it is in the required Sobolev space we use the difference quotient characterisation of $W^{1,2}$. Equation \eqref{4.38} implies 
 \begin{equation}\label{4.39}
  ||\vQ(\cdot+h)-\vQ(\cdot)||_{2,U}\geqslant 6^{-\frac{1}{2}}||s(\cdot+h)-s(\cdot)||_{2,U}
 \end{equation}
 for any $U\subset \subset \Omega$. Therefore by Proposition \ref{difference quotient} we infer that $s\in W^{1,2}\left( \Omega\right)$. Coupling this fact with \eqref{4.37} directly implies that $s\vn\otimes\vn\in W^{1,2}\left( \Omega,\mathbb{R}^{3\times 3}\right)$. If we take any $U\subset \subset \Omega\setminus \left\{ s=0\right\}$ then by the continuity of $s$, it is clear that
 \begin{equation}\label{4.40}
  \inf_{x\in U}|s(x)|>0.
 \end{equation}
 This means that we can apply Lemma \ref{3:lemma2} to $s\vn\otimes\vn$ and deduce 
 \begin{equation}\label{4.41}
  \vn\otimes\vn \in W^{1,2}\left( U,\mathbb{R}^{3\times 3}\right).
 \end{equation}
 Therefore $\vn \otimes\vn \in W^{1,2}_{loc}\left( \Omega \setminus \left\{ s=0\right\},\mathbb{R}^{3\times 3}\right)$. Now we can apply the previous result, Proposition \ref{3:prop1}, to find the vector field corresponding to $\vn\otimes\vn$. There exists some
 \begin{equation}\label{4.42}
  \vm \in W^{1,2}_{loc}\left( \Omega\setminus \left\{ s=0\right\},\mathbb{S}^2\right) 
 \end{equation}
 such that $\vm\otimes \vm=\vn\otimes\vn$ almost everywhere. To show the joint regularity we note that $s(\vn \otimes \vn) = s (\vm \otimes \vm) \in W^{1,2}(\Omega,\mathbb{R}^{3\times 3} )$, hence
 \begin{equation}\label{4.42.1}
  \int_\Omega \left|\nabla \left[ s(\vm \otimes \vm) \right] \right|^2\, dx = \int_\Omega 2s^2 |\nabla \vm |^2 + |\nabla s|^2 \, dx < \infty
 \end{equation}
 This concludes the first inclusion; we took some $\vQ \in \mathcal{A}$ and found a pair $(s,\vm)\in \mathcal{B}$ such that $\vQ=s\left( \vm\otimes \vm -\frac{1}{3} I\right)$ almost everywhere. The reverse inclusion is a little more straightforward, we take some $(s,\vn)\in \mathcal{B}$ and just need to show that $s\vn\otimes\vn \in W^{1,2}\left( \Omega,\mathbb{R}^{3\times 3}\right)$. Since $W^{1,2}_{loc}\cap L^\infty$ is an algebra of functions, it is closed under products, so that 
 \begin{equation}\label{4.43}
  \vn\otimes\vn \in W^{1,2}_{loc}\left(\Omega\setminus \left\{ s=0 \right\},\mathbb{R}^{3\times 3}\right).
 \end{equation}
 Using the joint regularity assumption we know that for any open set $U\subset\subset \Omega\setminus \left\{ s=0\right\}$ 
 \begin{equation}\label{4.45}
  \int_{U} |\nabla (s\vn\otimes\vn)|^2\,dx =\int_{U} 2s^2|\nabla \vn|^2+|\nabla s|^2\, dx\leqslant \int_\Omega 2s^2|\nabla \vn|^2+|\nabla s|^2\,dx<\infty.
 \end{equation}
As the upper bound in \eqref{4.45} is independent of $U$, we have shown that $s\vn\otimes\vn \in W^{1,2}\left( \Omega \setminus \left\{ s=0\right\},\mathbb{R}^{3\times 3}\right)$. Finally we apply Lemma \ref{3:lemma3} to deduce $s\vn\otimes\vn \in W^{1,2}\left(\Omega,\mathbb{R}^{3\times 3}\right)$. This establishes the one-to-one correspondence between $\mathcal{A}$ and $\mathcal{B}$. For the formal equivalence of the Lagrangians we just need to take some $\vQ \in \mathcal{A}$ and compute its Lagrangian in terms of $s$ and $\vn$.
\begin{equation}\label{4.46}
 \begin{split}
  =& |\nabla \vQ|^2+4t\vQ \cdot \nabla \times \vQ+3t^2|\vQ|^2 \\
  =& \sum_{i,j,k} \left(s_{k}\left(\vn_i\vn_j- \frac{1}{3}\delta_{ij}\right)+s\left( \vn_{i,k}\vn_j+\vn_i\vn_{j,k}\right) \right)^2 \\
  +& 4t\sum_{i,j,a,b} \epsilon_{jab} s\left(\vn_i\vn_j -\frac{1}{3}\delta_{ij}\right)\left[ s_a\left(\vn_i\vn_b-\delta_{ib}\right)+s\left( \vn_{i,b}\vn_{j}+\vn_{i}\vn_{j,b}\right)\right] \\
  +&2t^2\sum_{i,j} s^2\left(\vn_i\vn_j-\frac{1}{3}\delta_{ij}\right)^2 \\
  =&2s^2\left( |\nabla\vn|^2+2t\vn\cdot\nabla\times\vn+t^2\right) +\frac{2}{3}|\nabla s|^2
 \end{split}
\end{equation}
Therefore if we have some $\vQ\in \mathcal{A}$ we have the relation that $I_L(\vQ)=I_E(\vn,s)$ if $(\vn,s)$ is the pair of functions in $\mathcal{B}$ associated with $\vQ$. Therefore any $\vQ\in \mathcal{A}$ which minimises $I_L$ will yield a pair $(\vn,s)\in \mathcal{B}$ which minimises $I_E$ and vice versa. In this sense the two minimisation problems are equivalent.

\end{proof}

%
%
%
% 1-D Biaxial result
%
%
%

\subsection{Biaxial Landau-de Gennes}

\begin{theorem}\label{3:1D-Biaxial-Ericksen}
 Suppose $\Omega\subset \mathbb{R}$ is a bounded domain. Then minimising $I_L(\vQ)$ over 
 \begin{equation}\label{4.47}
  \mathcal{A}:=\left\{ \left.\vQ \in W^{1,2}\left( \Omega,\mathbb{R}^{3\times 3}\right)\,\right|\, \vQ^T=\vQ,\,\, \text{Tr}\,\vQ=0\,\right\}
 \end{equation}
 is equivalent to minimising $I_{BE}(\vn,\vm,s_1,s_2)$ over
 \begin{equation}\label{4.48}
  \mathcal{B}:=\left\{
  \left.
  \begin{array}{c}s_1 \in W^{1,2}\left( \Omega,[0,\infty)\right),\,\, \vn\in W^{1,2}_{loc}\left( \Omega\setminus \left\{ s_1=0\right\},\mathbb{S}^2\right),\, \\
  s_2 \in W^{1,2}\left( \Omega,(-\infty,0]\right), \,\,\vm \in W^{1,2}_{loc}\left( \Omega\setminus \left\{ s_2=0\right\},\mathbb{S}^2\right)\,\end{array}
  \right|\, \int_\Omega s_1|\nabla \vn|^2 + s_2 |\nabla \vm|^2 \, dx < \infty \, 
  \right\}.
 \end{equation}

\end{theorem}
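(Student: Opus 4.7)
The plan is to mimic the structure of Theorem~\ref{3:1D-Uniaxial-Ericksen}, establishing a correspondence between $\mathcal{A}$ and $\mathcal{B}$ and then verifying pointwise equality of the two Lagrangians by direct calculation. The main new difficulty, compared with the uniaxial setting, is that one must recover \emph{two} directors $\vn$ and $\vm$ together with two scalar parameters $s_1$ and $s_2$ from a single $W^{1,2}$ matrix field $\vQ$; unlike the uniaxial case, the two rank-one contributions $s_1\vn\otimes\vn$ and $s_2\vm\otimes\vm$ are not separated by a single algebraic shift of $\vQ$.

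For the forward implication I would start, given $\vQ\in\mathcal{A}$, with the pointwise spectral decomposition of the symmetric, traceless matrix $\vQ(x)$ to write
\[
\vQ(x)=s_1(x)\left(\vn\otimes\vn-\tfrac{1}{3}I\right)+s_2(x)\left(\vm\otimes\vm-\tfrac{1}{3}I\right),
\]
where $s_1\geqslant 0$ and $s_2\leqslant 0$ are built from the extreme eigenvalues of $\vQ$ and $\vn,\vm$ are the corresponding (orthogonal) unit eigenvectors. Applying Lemma~\ref{3:lemma5} to the pair $\vQ(x),\vQ(y)$, and discarding the non-negative line-field terms (using $s_1(x)s_1(y)\geqslant 0$ and $s_2(x)s_2(y)\geqslant 0$), gives
\[
|\vQ(x)-\vQ(y)|^2\geqslant \frac{1}{3}\bigl[(s_1(x)-s_1(y))^2+(s_2(x)-s_2(y))^2\bigr],
\]
and Proposition~\ref{difference quotient} then delivers $s_1,s_2\in W^{1,2}(\Omega)$.

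The main technical obstacle is to separate the two biaxial terms once $s_1$ and $s_2$ are known. For this I would introduce the shifted matrix $\vR:=\vQ+\tfrac{s_1+s_2}{3}I=s_1\vn\otimes\vn+s_2\vm\otimes\vm$, which lies in $W^{1,2}(\Omega,\mathbb{R}^{3\times 3})\cap L^\infty$. Exploiting $\vn\cdot\vm=0$ one obtains the algebraic identities
\[
\vR(\vR-s_2 I)=s_1(s_1-s_2)\,\vn\otimes\vn,\qquad \vR(\vR-s_1 I)=s_2(s_2-s_1)\,\vm\otimes\vm.
\]
On $\Omega\setminus\{s_1=0\}$ the factor $s_1(s_1-s_2)\geqslant s_1^2$ is continuous (Sobolev embedding in one dimension) and strictly positive on compact subsets, so Lemma~\ref{3:lemma6} applied componentwise produces $\vn\otimes\vn\in W^{1,2}_{loc}(\Omega\setminus\{s_1=0\},\mathbb{R}^{3\times 3})$; an identical argument treats $\vm\otimes\vm$ on $\Omega\setminus\{s_2=0\}$. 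Proposition~\ref{3:prop1} then lifts each line field to a unit vector field with the same regularity, and the joint integrability would be verified by expanding $|\nabla(s_i\vn_i\otimes\vn_i)|^2$ as in \eqref{4.42.1}.

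The converse inclusion and the equality of the Lagrangians would follow by adapting the uniaxial argument. For $(s_1,s_2,\vn,\vm)\in\mathcal{B}$ each rank-one piece $s_i(\vn_i\otimes\vn_i-\tfrac{1}{3}I)$ lies in $W^{1,2}(\Omega\setminus\{s_i=0\},\mathbb{R}^{3\times 3})$ by the product structure of $W^{1,2}_{loc}\cap L^\infty$ combined with the joint integrability hypothesis, and Lemma~\ref{3:lemma3} then extends it across $\{s_i=0\}$, so $\vQ\in\mathcal{A}$. Finally, a direct pointwise expansion analogous to \eqref{4.46}, this time exploiting $\vn\perp\vm$, recovers the biaxial integrand \eqref{4.6}: the cross terms between the $\vn$ and $\vm$ blocks in $|\nabla\vQ|^2$ and $\vQ\cdot\nabla\times\vQ$ produce precisely the mixed $-2s_1s_2(\cdots)$ term and the additional $|\nabla(s_1-s_2)|^2/3$ term appearing in $I_{BE}$. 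The genuinely new ingredient beyond the uniaxial argument, and the step I expect to require the most care, is the algebraic separation via $\vR$ and the quadratic identities above, which isolate $\vn\otimes\vn$ and $\vm\otimes\vm$ modulo the degeneracy sets on which two eigenvalues of $\vQ$ coincide.
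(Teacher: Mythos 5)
Your proposal is correct in substance, but the middle step takes a genuinely different route from the paper. Where you discard the line-field terms in the inequality of Lemma \ref{3:lemma5} and then recover $\vn\otimes\vn$, $\vm\otimes\vm$ by the algebraic separation $\vR:=\vQ+\tfrac{s_1+s_2}{3}I$ together with the identities $\vR(\vR-s_2I)=s_1(s_1-s_2)\,\vn\otimes\vn$ and $\vR(\vR-s_1I)=s_2(s_2-s_1)\,\vm\otimes\vm$ followed by Lemma \ref{3:lemma2}/\ref{3:lemma6}, the paper keeps exactly those terms: since $s_1(x)s_1(y)\geqslant\epsilon^2$ on any $U\subset\subset\Omega\setminus\{s_1=0\}$ (continuity of $s_1$ in one dimension), the estimate \eqref{4.50} gives $|\vQ(x)-\vQ(y)|^2\geqslant\epsilon^2|\vn\otimes\vn(x)-\vn\otimes\vn(y)|^2$ directly, and the difference-quotient characterisation (Proposition \ref{difference quotient}) yields $\vn\otimes\vn\in W^{1,2}(U,\mathbb{R}^{3\times3})$ without any further algebra; the same for $\vm\otimes\vm$ on $\Omega\setminus\{s_2=0\}$, after which Proposition \ref{3:prop1} lifts both line fields just as you do. The paper's route is shorter and is precisely why Lemma \ref{3:lemma5} was stated with the line-field terms; your route buys an explicit pointwise reconstruction of the two projections from $\vQ$, at the price of relying on the orthogonality $\vn\cdot\vm=0$ of the spectral decomposition and on the algebra property of $W^{1,2}\cap L^\infty$ (both unproblematic in one dimension). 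The scalar-parameter step, the joint-regularity verification via \eqref{4.42.1}, and the converse inclusion via \eqref{4.43}, \eqref{4.45} and Lemma \ref{3:lemma3} coincide with the paper's argument. One small caution: in the converse direction the admissible set $\mathcal{B}$ does not impose $\vn\perp\vm$, so an expansion of the Lagrangian that ``exploits $\vn\perp\vm$'' is not available there; the paper sidesteps this by omitting the (long) substitution calculation altogether, so this does not affect the comparison, but you should not lean on orthogonality at that stage.
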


\begin{proof}
 
 The basic form of this proof follows that of the previous theorem but using more general biaxial forms. We take some $\vQ \in \mathcal{A}$ then we know that we can write it as 
 \begin{equation}\label{4.49}
  \vQ=s_1\left( \vn\otimes \vn -\frac{1}{3}I\right)+s_2\left( \vm\otimes\vm-\frac{1}{3}I\right)
 \end{equation}
 for $\vn,\vm \in \mathbb{S}^2$, $s_1\geqslant 0$ and $s_2\leqslant 0$. Applying Lemma \ref{3:lemma5} to this function $\vQ(x)$, we deduce that 
 \begin{equation}\label{4.50}
 \begin{split}
 |\vQ(x)-\vQ(y)|^2 &\geqslant \frac{1}{3}\left[ (s_1(x)-s_1(y))^2+(s_2(x)-s_2(y))^2\right] \\
 &+s_1(x)s_1(y)\left| \vn\otimes\vn(x)-\vn\otimes\vn(y)\right|^2 +s_2(x)s_2(y)\left| \vm\otimes\vm(x)-\vm\otimes\vm(y)\right|^2.
 \end{split}
 \end{equation}
 As in the previous proof, using the difference quotient characterisation of Sobolev spaces with \eqref{4.50} we find 
 \begin{equation}\label{4.51}
  s_1,s_2\in W^{1,2}\left( \Omega\right).
 \end{equation}
 In order to find the correct function space for $\vn$ we take some open set $U\subset \subset \Omega\setminus\left\{ s_1=0\right\}$. There exists some $\epsilon>0$ such that 
 \begin{equation}\label{4.52}
  \inf_{x\in U}|s_1(x)|\geqslant \epsilon.
 \end{equation}
 This means that $\inf_{x,y\in U} s_1(x)s_1(y)\geqslant \epsilon^2$. In other words, we deduce from \eqref{4.50} that for $x,y\in U$
 \begin{equation}\label{4.53}
  |\vQ(x)-\vQ(y)|\geqslant \epsilon^2 |\vn\otimes\vn(x)-\vn\otimes\vn(y)|^2.
 \end{equation}
Again, using the difference quotient characterisation with this equation implies $\vn \otimes\vn \in W^{1,2}\left( U,\mathbb{R}^{3\times3}\right)$. As $U$ was arbitrary this tells us that $\vn\otimes\vn\in W^{1,2}_{loc}\left( \Omega\setminus \left\{ s_1=0\right\},\mathbb{R}^{3\times3}\right)$. We apply Proposition \ref{3:prop1} to the line field to deduce the existence of some 
\begin{equation}\label{4.54}
 \vn_1 \in W^{1,2}_{loc}\left( \Omega\setminus \left\{ s_1=0\right\},\mathbb{S}^2\right) 
\end{equation}
such that $\vn_1\otimes\vn_1=\vn\otimes\vn$ whenever $s_1\neq0$. By identical reasoning we can also find a mapping $\vm_1 \in W^{1,2}\left( \Omega\setminus \left\{ s_2=0 \right\},\mathbb{S}^2\right) $ such that $\vm_1\otimes\vm_1=\vm\otimes\vm$ whenever $s_2\neq0$. The joint regularity condition for the two pairs of functions is clear by using the same logic as from \eqref{4.42.1}. This concludes the first half of the correspondence between the admissible sets of functions. For the reverse inclusion we take a group of mappings $(\vn,\vm,s_1,s_2)\in \mathcal{B}$ and we just need to show that $s_1\vn\otimes\vn,s_2\vm\otimes\vm\in W^{1,2}\left( \Omega,\mathbb{R}^{3\times 3}\right)$ to ensure that 
\begin{equation}\label{4.55}
 \vQ:=s_1\left( \vn\otimes\vn -\frac{1}{3}I\right) +s_2\left( \vm\otimes\vm-\frac{1}{3}I\right)\in W^{1,2}\left( \Omega,\mathbb{R}^{3\times3}\right).
\end{equation}
However we can use the same logic from the previous proof (see \eqref{4.43} and \eqref{4.45}) to each of the pairs $(s_1,\vn)$ and $(s_2,\vm)$ separately to arrive at \eqref{4.55}. This proves the equivalence between the sets $\mathcal{A}$ and $\mathcal{B}$. It is a simple but long calculation to show that if we have some $\vQ$ in the form of \eqref{4.55}, then substituting this into the Landau-de Gennes energy \eqref{4.3} will precisely produce the biaxial Ericksen energy \eqref{4.6}. For the sake of brevity and elegance we do not include the calculation here. Therefore the proof is complete.
 
\end{proof}

%
%
%
% Start of multidimensional results 
%
%
%

\section{Two and Three-dimensional Results}\label{sec:2,3D results}

 In higher dimensions there are a greater number of intricacies, primarily because of the fact that not all connected domains are simply connected, unless we are in dimension one. The importance of simply-connected domains when looking to turn a line field into a vector field is crucial. On the simplest level, given a line field $\vn\otimes\vn$, at every point of the domain there are two possible choices for the vector field: $\vn$ and $-\vn$. A choice is made at some given point $x\in \Omega$ and we want to use this decision to choose what the vector field should be at every other point. This is more easily done is simply-connected domains because any two continuous curves in $\Omega$ connecting two points $x,y\in \Omega$ are homotopic. As an example to illustrate this, in Figure \ref{fig:athletics_track_example} the smooth line field as denoted by the dashed lines cannot be converted into a smooth vector field because of the holes in the domain.

 \vspace{3 mm}
 
We will prove three lifting results analogous to Proposition \ref{3:prop1} about the regularity of line fields and vectors fields in two and three dimensions. For the first we extend a result of Ball and Zarnescu.

\begin{theorem}\cite[Theorem 2]{ball2011orientability}
 Suppose that $d=2\,\,\text{or}\,\,3$ and $\Omega\subset \mathbb{R}^d$ is an open, bounded and simply connected set with continuous boundary. Suppose there is a mapping $\vn:\Omega\rightarrow \mathbb{S}^2$ such that $\vn\otimes \vn\in W^{1,2}\left(\Omega,\mathbb{R}^{3\times3}\right)$. Then
\begin{equation}\label{4.55.0}
  \exists\, \vm\in W^{1,2}\left(\Omega,\mathbb{S}^{2}\right)\,\,\text{s.t.}\,\,\vm\otimes\vm=\vn\otimes\vn\,\,\text{a.e.}
\end{equation}
\end{theorem}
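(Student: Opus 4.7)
My approach is to reduce the problem to the classical covering-space lifting theorem. The map $\Pi: \mathbb{S}^2 \to \mathcal{N}$, $\vm \mapsto \vm \otimes \vm$, identifies the image $\mathcal{N} \subset \mathbb{R}^{3\times 3}$ with $\mathbb{RP}^2$ and is a smooth double cover. Since $\Omega$ is simply connected, any smooth map $\Omega \to \mathcal{N}$ admits a smooth lift to $\mathbb{S}^2$; the task is to transplant this fact to the $W^{1,2}$ setting. The key technical input is the pointwise identity
\begin{equation}\label{pl:pointwise}
|\nabla (\vm \otimes \vm)|^2 = 2 |\nabla \vm|^2 \quad \text{a.e.},
\end{equation}
valid for any $\vm \in W^{1,2}(\Omega, \mathbb{S}^2)$ (the cross term $2\sum_k (\vm \cdot \partial_k \vm)^2$ vanishes because $|\vm|=1$). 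The content of \eqref{pl:pointwise} is that any $W^{1,2}$ lift automatically inherits its Dirichlet energy from the line field, so the only real work is to produce the lift topologically.

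First I would produce smooth approximations. The continuous boundary hypothesis permits extending $\vn \otimes \vn$ past $\partial \Omega$ and mollifying to obtain smooth $Q_k \to \vn \otimes \vn$ in $W^{1,2}(\Omega, \mathbb{R}^{3\times 3})$. Since $\mathcal{N}$ is a compact smooth submanifold with a smooth tubular-neighborhood retraction $\pi$, extended to a globally defined bounded-Lipschitz map into $\mathcal{N}$, the compositions $\widetilde{Q}_k := \pi(Q_k)$ are smooth $\mathcal{N}$-valued maps converging to $\pi(\vn \otimes \vn) = \vn \otimes \vn$ in $W^{1,2}$. By simple connectivity, each $\widetilde{Q}_k$ admits a smooth lift $\vm_k : \Omega \to \mathbb{S}^2$ with $\vm_k \otimes \vm_k = \widetilde{Q}_k$, unique up to a global sign. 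Estimate \eqref{pl:pointwise} gives
\begin{equation}
\int_\Omega |\nabla \vm_k|^2\, dx = \tfrac{1}{2}\int_\Omega |\nabla \widetilde{Q}_k|^2\, dx,
\end{equation}
which is bounded uniformly in $k$. Hence, passing to a subsequence, $\vm_k \rightharpoonup \vm$ weakly in $W^{1,2}$ and $\vm_k \to \vm$ a.e., so $|\vm|=1$ a.e., and passing to the a.e.\ limit in $\vm_k \otimes \vm_k = \widetilde{Q}_k$ yields $\vm \otimes \vm = \vn \otimes \vn$ a.e.

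The main obstacle is the sign ambiguity. The two smooth lifts of each $\widetilde{Q}_k$ differ by a global $\pm 1$, so an inconsistent choice across $k$ could prevent any subsequence of $(\vm_k)$ from converging (the sequence could oscillate between $\vm_k$ and $-\vm_k$). I would resolve this by selecting, at each step, the sign making $\int_\Omega \vm_k \cdot \vm_{k-1}\, dx \geq 0$. Since $\widetilde{Q}_k \to \vn \otimes \vn$ strongly in $L^2$ and $\vm_k \otimes \vm_k = \widetilde{Q}_k$, this normalization forces $|\vm_k - \vm_{k-1}|^2 \to 0$ in $L^1$ (rather than $|\vm_k + \vm_{k-1}|^2 \to 0$), so $(\vm_k)$ is Cauchy in $L^2$ and a.e.\ convergent along a subsequence to a genuine lift, completing the argument.
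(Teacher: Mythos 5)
The paper does not reprove this theorem; it cites Ball--Zarnescu directly, and later (Proposition \ref{3:prop2 Ball-Zarnescu Extension}) removes the boundary hypothesis by a local patching argument over a cover by balls. Your proposal is therefore a genuinely different route, via smooth approximation of the line field plus classical covering-space lifting. Unfortunately it contains a gap that is fatal in the case $d=3$.

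Your projection step requires a Lipschitz map $\pi:\mathbb{R}^{3\times 3}\to\mathcal{N}$ that restricts to the identity on $\mathcal{N}$. Since $\mathcal{N}\cong\mathbb{RP}^2$ is not contractible while $\mathbb{R}^{3\times3}$ is, the identity on $\mathcal{N}$ cannot factor through $\mathbb{R}^{3\times 3}$: no such retraction exists, globally defined or otherwise. The nearest-point projection is defined only on a tubular neighbourhood, and your mollified $Q_k$ need not live there pointwise. This is exactly the difficulty addressed by the Schoen--Uhlenbeck theory, and the conclusion there is unfavourable to your plan: for a three-dimensional domain, maps in $C^\infty(\Omega,\mathcal{N})$ are \emph{not} dense in $W^{1,2}(\Omega,\mathcal{N})$, because the relevant obstruction $\pi_{\lfloor 2\rfloor}(\mathcal{N})=\pi_2(\mathbb{RP}^2)=\mathbb{Z}$ is nontrivial (Bethuel's criterion). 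So even a correct projection scheme cannot produce the sequence $\widetilde{Q}_k$ of smooth $\mathcal{N}$-valued maps converging in $W^{1,2}$ to $\vn\otimes\vn$ that your argument hinges on; the existence of the lift one is trying to prove does not imply approximability, since $\pi_2(\mathbb{S}^2)=\mathbb{Z}$ carries the same obstruction upstairs. In $d=2$ the scale-invariant case $p=\dim\Omega$ rescues density, so your argument is plausibly repairable there, but not for $d=3$.

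Two smaller remarks. First, a continuous boundary does not give a $W^{1,2}$ extension operator (think of outward cusps), so the claim that you can ``extend $\vn\otimes\vn$ past $\partial\Omega$ and mollify'' is unsupported; the Meyers--Serrin theorem supplies interior smooth approximants on any open $\Omega$ and is the correct tool, making the boundary hypothesis irrelevant for your step. Second, the pointwise identity $|\nabla(\vm\otimes\vm)|^2=2|\nabla\vm|^2$ and the sign-normalisation $\int_\Omega \vm_k\cdot\vm_{k-1}\,dx\geqslant 0$ are both correct and would close the argument if the approximation step could be repaired; but given the $\pi_2$-obstruction, a proof of this theorem has to avoid global smooth approximation altogether, which is why Ball--Zarnescu (and the proof of Proposition \ref{3:prop2 Ball-Zarnescu Extension} in this paper) proceed by local orientation choices and careful gluing rather than by approximation.
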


This result assumes that the simply connected domain has a continuous boundary. Our extension will remove this assumption. Therefore we will show that it is possible to convert line fields to vector fields in the space $W^{1,2}$ when the domain is simply connected.

\begin{figure}[ht]
\centering
\includegraphics[scale=0.3]{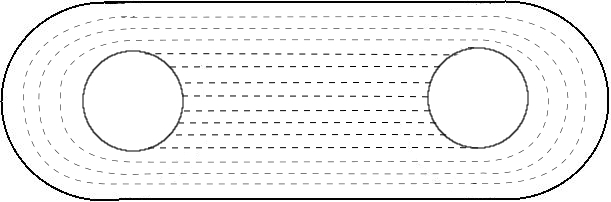}
\caption{Non-simply connected domain example}
\label{fig:athletics_track_example}
\end{figure}

\begin{proposition}\label{3:prop2 Ball-Zarnescu Extension}
 Suppose that $d=2\,\,\text{or}\,\,3$ and $\Omega\subset \mathbb{R}^d$ is an open, bounded and simply connected set. Suppose there is a mapping $\vn:\Omega\rightarrow \mathbb{S}^2$ such that $\vn\otimes \vn\in W^{1,2}\left(\Omega,\mathbb{R}^{3\times3}\right)$. Then
\begin{equation}\label{4.56}
  \exists\, \vm\in W^{1,2}\left(\Omega,\mathbb{S}^{2}\right)\,\,\text{s.t.}\,\,\vm\otimes\vm=\vn\otimes\vn\,\,\text{a.e.}
\end{equation}

\end{proposition}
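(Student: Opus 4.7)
My approach is to extend Ball--Zarnescu's theorem from simply-connected domains with continuous boundary to arbitrary simply-connected open sets via a local-to-global patching argument. The lifting problem is intrinsically local, and the topological obstruction to assembling local liftings into a global one is a $\mathbb{Z}/2\mathbb{Z}$-valued cohomology class that vanishes whenever $\Omega$ is simply connected. Thus I expect to recover the existence of $\vm$ without any hypothesis on $\partial \Omega$.

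First I would cover $\Omega$ by a countable, locally finite collection of open balls $\{B_i\}_{i \in I}$ with $\overline{B_i} \subset \Omega$. Each $B_i$ is open, bounded, simply connected, and has smooth boundary, so the Ball--Zarnescu theorem (in the form cited above) applied on each $B_i$ produces local liftings $\vm_i \in W^{1,2}(B_i, \mathbb{S}^2)$ with $\vm_i \otimes \vm_i = \vn \otimes \vn$ almost everywhere on $B_i$.

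Next I would reconcile the local choices. On any non-empty overlap $B_i \cap B_j$ (convex, hence connected), define $\sigma_{ij} := \vm_i \cdot \vm_j$. As a scalar product of bounded $W^{1,2}$ mappings, $\sigma_{ij}$ lies in $W^{1,2}(B_i \cap B_j)$ and satisfies $\sigma_{ij}^2 = 1$ almost everywhere; hence its weak gradient vanishes and $\sigma_{ij}$ is constant with value in $\{\pm 1\}$ on the connected overlap. The family $\{\sigma_{ij}\}$ is then a \v{C}ech $1$-cocycle with coefficients in $\mathbb{Z}/2\mathbb{Z}$. Because open balls form a good cover, the nerve theorem identifies this \v{C}ech cohomology with that of $\Omega$, and simple connectivity forces $H^1(\Omega; \mathbb{Z}/2\mathbb{Z}) = 0$. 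Hence there exist signs $\epsilon_i \in \{\pm 1\}$ with $\sigma_{ij} = \epsilon_i \epsilon_j$ for every pair $(i,j)$, and setting $\vm := \epsilon_i \vm_i$ on each $B_i$ defines a consistent global map $\vm: \Omega \to \mathbb{S}^2$ with $\vm \otimes \vm = \vn \otimes \vn$ almost everywhere.

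For the regularity I would observe that $|\vm| = 1$ forces $\vm \cdot \nabla \vm = 0$, from which a direct computation gives $|\nabla(\vm \otimes \vm)|^2 = 2|\nabla \vm|^2$. Combining this with $\vm \otimes \vm = \vn \otimes \vn$ yields $\int_\Omega |\nabla \vm|^2 \, dx = \tfrac{1}{2} \int_\Omega |\nabla(\vn \otimes \vn)|^2 \, dx < \infty$, so that $\vm \in W^{1,2}(\Omega, \mathbb{S}^2)$. The hardest part is the topological bookkeeping in the gluing step; an equivalent but more concrete reformulation is a monodromy argument, fixing a basepoint $x_0 \in \Omega$ with a chosen local lifting there, transporting it along paths in $\Omega$ via overlapping Ball--Zarnescu liftings on small balls, and invoking simple connectivity to guarantee that the sign arriving at the endpoint is independent of the path.
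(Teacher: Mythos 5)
Your proof is correct, and it reaches the result by a genuinely different route from the paper. You package the sign ambiguity cohomologically: the local Ball–Zarnescu liftings on a good cover of balls produce transition signs $\sigma_{ij}\in\{\pm 1\}$ (you correctly argue these are locally constant because $\sigma_{ij}\in W^{1,2}$ with $\sigma_{ij}^2=1$ a.e.\ on a connected convex overlap), these form a \v{C}ech $1$-cocycle with $\mathbb{Z}/2\mathbb{Z}$ coefficients because $\vm_i=\sigma_{ij}\vm_j$ forces $\sigma_{ik}=\sigma_{ij}\sigma_{jk}$ on triple overlaps, and simple connectivity together with the nerve theorem gives $H^1(\Omega;\mathbb{Z}/2\mathbb{Z})=0$, so the cocycle is a coboundary and the liftings can be globally reconciled. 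The paper instead takes the explicit monodromy route you mention in your final sentence: it uses maximal balls $U_x=B(x,d(x,\partial\Omega))$, transports the sign along a continuous path via overlapping balls, proves by a maximality argument that only finitely many balls are needed (its Lemma on the sequence $t_j$), and then uses uniform continuity of a homotopy $H$ between two paths to show path-independence of the arriving sign. The cohomological argument is cleaner at the level of book-keeping and slots the obstruction into standard machinery (good cover, nerve theorem, Hurewicz plus universal coefficients to get $H^1=0$ from $\pi_1=0$), but it presupposes that machinery; the paper's argument is more hands-on, avoids invoking any cohomology theory, and does the compactness and covering analysis directly, at the cost of a longer construction. Both hinge on the same topological fact — that $\pi_1(\Omega)=1$ kills the $\mathbb{Z}/2$ orientation obstruction — and both close the regularity step identically via $\int_\Omega|\nabla(\vm\otimes\vm)|^2\,dx = 2\int_\Omega|\nabla\vm|^2\,dx$. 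One small thing to spell out were this fleshed into a full proof: you should note that the intersections of balls are convex, hence connected and contractible, so they simultaneously justify the cover being \emph{good} (needed for the nerve theorem) and the constancy of each $\sigma_{ij}$.
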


\begin{proof}

Since $\Omega$ is an open set we know that for all $x\in\Omega$ there exists a maximal radius $\epsilon_x>0$, such that 
\begin{equation}\label{4.57}
 U_x :=B(x,\epsilon_x)\subseteq \Omega.
\end{equation}
The result of Ball-Zarnescu \cite[Theorem 2]{ball2011orientability} applies in any simply connected domain which has a continuous boundary, so certainly we can apply their result to each of these balls $U_x$ to deduce that 
\begin{equation}\label{4.58}
 \exists \,\vm\in W^{1,2}(U_x,\mathbb{S}^{2})\,\,\text{s.t.}\,\,\vm\otimes\vm=\vn\otimes\vn\,\,\text{a.e. on}\,\,U_x.
\end{equation}
We also know that whenever we apply this result there are two possible choices for the vector field: $\vm$ and $-\vm$. The main issue in this proof is how to choose the orientations in each ball $U_x$ so that at the finish we have a function with $W^{1,2}$ regularity on the whole of $\Omega$.

\vspace{3 mm}

We fix an arbitrary point $x\in\Omega$. Applying \cite[Theorem 2]{ball2011orientability} precisely gives us the statement \eqref{4.58}. Now we proceed to show that this one choice uniquely defines the choice that we must make in every other ball $U_y$ in $\Omega$. We take $y\in\Omega$, $y\neq x$, and we suppose we have a continuous injective curve $f$ such that
\begin{equation}\label{4.59}
 f:[0,1] \rightarrow \Omega\,\,\text{and} \,\,f(0)=x,\,\,f(1)=y.
\end{equation}
We use this curve to motivate the following definition. For $t\in [0,1]$ let
\begin{equation}\label{4.60}
 t_+:=\inf\left\{\,\mu\geq t \,\left|\,\mu\in[0,1],\,\,f(\mu)\notin U_{f(t)}\,\right\}\right.\wedge\left\{1\right\}.
\end{equation}
This simply defines the first point on the curve to leave the set $U_{f(t)}$ (or $1$ if there is no such point). Using this notion we define a sequence $(t_j)$ as follows:
\begin{equation}\label{4.61}
 t_1:=0_+\quad t_2:=\left(t_1\right)_+ \quad t_3:=\left(t_2\right)_+ \,\, \dots
\end{equation}

\begin{lemma}\label{3:prooflemma1}
$\exists \, N\in\mathbb{N}$ such that $t_N =1$.
\end{lemma}

\begin{proof}

It is clear from \eqref{4.60} that if $t \in (0,1)$ then $t<t_+\leq 1$ so that this sequence is a monotonically increasing sequence which is bounded above by $1$. Such a sequence must converge. For a contradiction we suppose
\begin{equation}\label{4.62}
 t_j \rightarrow t <1.
\end{equation}
Consider the set $U_{f(t)}$; we know that $\epsilon_{f(t)}>0$. We also know $f$ is a continuous function so $f(t_j)\rightarrow f(t)$. Hence 
\begin{equation}\label{4.63}
\exists \, J\,\,\text{s.t.}\,\,\forall j\geq J\quad f(t_j)\subset B\left( f(t),\frac{\epsilon_{f(t)}}{2}\right).
\end{equation}
As we defined the radii of our balls to be maximal it must be that 
\begin{equation}\label{4.64}
 \epsilon_{f(t_J)}>\frac{\epsilon_{f(t)}}{2},
\end{equation}
thus 
\begin{equation}\label{4.65}
 t_{J+1}=\left(t_J\right)_+>t,
\end{equation}
which is a contradiction. So we must have that $t_j\rightarrow 1$ as $j\rightarrow \infty$. Using this argument again we deduce that there can only be finitely many different terms. We know
\begin{equation}\label{4.66}
 \exists \, M\,\,\text{s.t.}\,\,\forall j\geq M\quad f(t_j)\subset B\left( f(1),\frac{\epsilon_{f(1)}}{2}\right)=B\left( y,\frac{\epsilon_{y}}{2}\right).
\end{equation}
This time maximality allows us to deduce that it must be the case that $\left(t_M\right)_+=t_{M+1}=1$.

\end{proof}

\vspace{3 mm}

It is also necessary to note a second property before we continue onto the main portion of the proof.

\begin{lemma}\label{3:prooflemma2}
 \begin{equation}\label{4.67}
  \inf_{t \in [0,1]} \epsilon_{f(t)} >0
 \end{equation}
\end{lemma}

\begin{proof}

This statement is simple because the explicit form for $\epsilon_x$ is $d(x,\partial\Omega)$. The continuity of the distance function ensures that $\epsilon_x$ is a continuous function of $x$. Combining this with the fact that it is positive on the compact set $f([0,1])$, gives \eqref{4.67}. 

\end{proof}
 
\vspace{3 mm}

Now that we have established these lemmas we can move on to the choice construction. For all $t\in[0,t_1]$ we know
\begin{equation}\label{4.68}
 \mathcal{L}^d\left( U_x \cap U_{f(t)}\right)>0,
\end{equation}
so that we can uniquely choose the orientation in $U_{f(t)}$ by ensuring that on the set $U_x \cap U_{f(t)}$, it agrees with the orientation already chosen for $U_x$. We continue this process for the finite number of intervals $[t_j,t_{j+1}]$. This construction ensures that we have uniquely chosen an orientation at each point along the curve using the orientation at its starting point. Now we need to show that the orientation chosen for $U_y$ is independent of the path chosen. To do this we take a second continuous injective curve $g:[0,1]\rightarrow \Omega$ such that $g(0)=x$ and $g(1)=y$. At this juncture we use the crucial assumption that $\Omega$ is simply connected which has the following formulation.

\vspace{3 mm}

\textbf{\emph{A set is simply connected if and only if it is path connected and any two continuous curves with the same endpoints are homotopic.}}

\vspace{3 mm}

This means that our two curves $f$ and $g$ must be homotopic in the sense that there exists a continuous map $H$, with the following properties
\begin{equation}\label{4.69}
 H:[0,1]\times[0,1]\rightarrow \Omega \quad\text{s.t.}\quad H(t,0)=f(t),\,\,H(t,1)=g(t),\,\,H(0,\lambda)=x\,\,\text{and}\,\,H(1,\lambda)=y.
\end{equation}
For a contradiction we assume that using this second path we arrive at the other orientation on the set $U_y$. Then we can define
\begin{equation}\label{4.70}
 \lambda^*:= \inf\left\{\, \mu\,|\,H(1,\mu)\,\,\text{gives the opposite orientation to }H(1,0)=f(1)\,\right\}.
\end{equation}
The fact that $H$ is continuous on a compact set means that it is uniformly continuous, therefore
\begin{equation}\label{4.71}
 \forall \, \epsilon >0\quad \exists \,\delta >0 \,\,\text{s.t.}\,\,|(t_1,\lambda_1)-(t_2,\lambda_2)|<\delta\,\,\Rightarrow \,\,|H(t_1,\lambda_1)-H(t_2,\lambda_2)|<\epsilon.
\end{equation}
Keeping Lemma \ref{3:prooflemma2} in mind, we set 
\begin{equation}\label{4.72}
 \lambda_2:=\max\left\{\lambda^*-\frac{\delta}{2},0\right\} \quad \text{and} \quad
 \epsilon :=\frac{1}{4}\inf_{t\in[0,1]} \epsilon_{H(t,\lambda^*)}>0,
\end{equation}
and choose $\lambda_1 \in \left(\lambda^*,\lambda^*+\frac{\delta}{2}\right)$ such that $H(1,\lambda_1)$ gives the opposite orientation to $H(1,0)$. This means we have $0\leqslant \lambda_2 < \lambda_1 \leqslant 1$ and $|\lambda_1-\lambda_2|< \delta$. Then using \eqref{4.71} we can easily deduce
\begin{equation}\label{4.73}
 |H(t,\lambda_1)-H(t,\lambda_2)|<\epsilon \quad \forall \,t\in[0,1].
\end{equation}
By the definition of $\epsilon$ we must also have that 
\begin{equation}\label{4.74}
 \mathcal{L}^d \left(U_{H(t,\lambda_1)}\cap U_{H(t,\lambda_2)}\right) >0\quad \forall \, t\in[0,1],
\end{equation}
so by our choice construction \eqref{4.68}, $H(t,\lambda_1)$ has the same orientation as $H(t,\lambda_2)$ for every $t\in[0,1]$. However, our assumption from \eqref{4.70} was that these two curves must have differing orientations at $t=1$, a contradiction. Therefore,
\begin{equation}\label{4.75}
 \left\{\, \mu\,|\,H(1,\mu)\,\,\text{has the opposite orientation to }H(1,0)=f(1)\,\right\}=\emptyset,
\end{equation}
meaning the choice of orientation on $U_y$ is path independent. To conclude we need to show that by this process we have defined a function $\vm$, which has $W^{1,2}$ regularity everywhere in the domain. Take any two points $x\neq y$, $x,y\in\Omega$, then if 
\begin{equation}\label{4.76}
 \mathcal{L}^d\left(U_x\cap U_y\right)=0 \,\,\Rightarrow \,\,\vm\in W^{1,2}\left(U_x\cup U_y,\mathbb{S}^{2}\right).
\end{equation}
Equally, by our construction, if $\mathcal{L}^d\left(U_x\cap U_y\right)>0$ then the orientations chosen in $U_x$ and $U_y$ agree on $U_x \cap U_y$. Thus in either case we have
\begin{equation}\label{4.77}
 \vm \in W^{1,2}\left(U_x\cup U_y,\mathbb{S}^{2}\right).
\end{equation}
Using this as a base case it is clear that a simple induction gives us
\begin{equation}\label{4.78}
 \vm \in W^{1,2}\left( \bigcup_{i=1}^N U_{x_i},\mathbb{S}^{2}\right),
\end{equation}
for any finite union. Now take any $U\subset\subset \Omega$; $\left\{\, U_x\,|\,x\in \overline{U}\,\right\}$ is an open cover for $\overline{U}$. This has a finite subcover $\left\{\, U_{x_i}\,\left|\,1\leq i \leq N,\,\,x_i\in \overline{U}\,\right\} \right.$. Then equation \eqref{4.78} tells us
\begin{equation}\label{4.79}
 \vm \in W^{1,2}\left( \bigcup_1^M U_{x_i},\mathbb{S}^{2}\right) \,\,\Rightarrow \,\,\vm \in W^{1,2}\left( U,\mathbb{S}^{2}\right)\,\,\Rightarrow \,\,\vm \in W^{1,2}_{\text{loc}}\left( \Omega,\mathbb{S}^{2}\right).
\end{equation}
In order to remove the local condition from \eqref{4.79} we note that $\vm\otimes\vm=\vn\otimes\vn \in W^{1,2}\left(\Omega,\mathbb{R}^{3\times3}\right)$ and 
\begin{equation}\label{4.80}
 \infty>\int_{\Omega}|\nabla \left( \vn\otimes\vn \right)|^2\,dx =\int_{\Omega}|\nabla \left( \vm\otimes\vm \right)|^2\,dx=\int_{\Omega}2|\nabla \vm|^2\,dx.
\end{equation}
Combining \eqref{4.80} with \eqref{4.79} we finally see that 
\begin{equation}\label{4.81}
 \vm \in W^{1,2}\left(\Omega,\mathbb{S}^{2}\right)
\end{equation}
\end{proof}

\begin{remark}
 This proposition shows that in a bounded simply connected domain in $\mathbb{R}^2$, or $\mathbb{R}^3$, uniaxial Landau-de Gennes theory with a constant scalar order parameter is directly equivalent to the Oseen-Frank theory.
\end{remark}

%
%
%
% Line field - Vector field result on connected domains
%
%
%

This lifting result is a nice extension of the result of Ball \& Zarnescu but we cannot restrict ourselves to just studying simply-connected domains for reasons which will become apparent in the equivalence theorems later in this section. Hence we will now prove the lifting result which is analogous to Proposition \ref{3:prop2 Ball-Zarnescu Extension} in a general domain. However in order to do so we will require the Whitney decomposition theorem.

\begin{theorem}\cite[Theorem 3, p.16]{stein1970singular}\label{whitney_thm}
Let $F\subset \mathbb{R}^n$ be a non-empty closed set and $\Omega:=F^c$. Then there is a set of closed cubes $\left\{ \, Q_j\, \subset \Omega\,|\,1\leqslant j\,\right\}$ such that 

\begin{itemize}
 \item $\Omega=\bigcup_1^\infty Q_j$
 \item $\text{int}( Q_j)\cap \text{int}\left( Q_k\right)= \emptyset$ if $j\neq k$
 \item $\text{diam}\left( Q_j\right) \leqslant d\left( Q_j,\partial \Omega\right) \leqslant 4 \text{diam}\left( Q_j\right)$
\end{itemize}

\end{theorem}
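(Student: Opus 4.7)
The plan is to build $\{Q_j\}$ from the standard dyadic lattice in $\mathbb{R}^n$, namely cubes of the form $Q = \prod_{i=1}^n [m_i 2^{-k},(m_i+1)2^{-k}]$ with $k, m_i \in \mathbb{Z}$, and then cut down to a subfamily on which side length is comparable to distance to $F$. The fundamental tool is the dyadic dichotomy: any two dyadic cubes either have disjoint interiors or one contains the other. I will also use the elementary observation that $\partial\Omega = \partial F$, so that $d(Q,\partial\Omega) = d(Q,F)$ for any $Q \subset \Omega$ and I can phrase the entire argument in terms of $F$.

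First I would introduce the preliminary family
\[
\mathcal{F}_0 := \left\{\, Q \text{ dyadic, } Q \subset \Omega \,:\, \text{diam}(Q) \leq d(Q, F) \leq 4\,\text{diam}(Q)\,\right\},
\]
and show that $\bigcup \mathcal{F}_0 = \Omega$. For $x \in \Omega$, set $r := d(x, F) > 0$. The unique dyadic cube of side $2^{-k}$ containing $x$ has diameter $2^{-k}\sqrt{n}$, and its distance to $F$ lies in $[\,r - 2^{-k}\sqrt{n},\, r\,]$; both defining inequalities of $\mathcal{F}_0$ then reduce to $r/(4\sqrt{n}) \leq 2^{-k} \leq r/(2\sqrt{n})$, a range of ratio $2$. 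Since every interval of positive reals of ratio $2$ contains an integer power of $2$, an admissible $k$ exists and the corresponding cube lies in $\mathcal{F}_0$ and contains $x$.

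Next I would pass to a disjoint subfamily by taking the cubes that are maximal in $\mathcal{F}_0$ under inclusion. Any ancestor $Q'$ of $Q \in \mathcal{F}_0$ satisfies $\text{diam}(Q') = 2^j \text{diam}(Q)$ and $d(Q',F) \leq d(Q,F) \leq 4\,\text{diam}(Q)$, so $Q'$ can remain in $\mathcal{F}_0$ only while $2^j \leq 4$; the chain of ancestors in $\mathcal{F}_0$ is therefore finite and each $Q$ has a unique maximal ancestor $Q^* \in \mathcal{F}_0$. The dyadic dichotomy then forces distinct maximal cubes to have disjoint interiors, covering is preserved because $x \in Q \subset Q^*$, and the size sandwich $\text{diam}(Q_j) \leq d(Q_j, F) \leq 4\,\text{diam}(Q_j)$ is inherited from the defining condition of $\mathcal{F}_0$. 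Countability is automatic since the dyadic lattice is.

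The main obstacle is the constant calibration. The lower bound $\text{diam}(Q) \leq d(Q,F)$ must be relaxed enough that some admissible dyadic side length exists for every $r > 0$, while the upper bound $d(Q,F) \leq 4\,\text{diam}(Q)$ must be tight enough that the chain of ancestors terminates and the maximal selection is well defined. The factor $4$ in the statement is exactly what is needed to balance these two demands: a substantially smaller upper constant would leave gaps in the covering, whereas a larger one would cost disjointness at the maximal selection step. Tracking the dimension-dependent factor $\sqrt{n}$ carefully through the estimates is tedious but routine.
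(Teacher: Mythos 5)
Your construction is correct and is essentially the standard dyadic proof found in the cited source (Stein): form the candidate family $\mathcal{F}_0$ from the size sandwich, then pass to the maximal cubes, with the dyadic dichotomy guaranteeing disjoint interiors. The paper gives no proof of its own (it simply cites Stein), and your observation that $\partial\Omega=\partial F$, so that $d(Q,F)=d(Q,\partial\Omega)$ for $Q\subset\Omega$, correctly reconciles the paper's phrasing in terms of $\partial\Omega$ with the usual statement in terms of the closed set $F$.
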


\begin{proposition}\label{3:prop3 Line field Vector field SBV}
 Suppose that $\Omega \subset \mathbb{R}^d$ is a bounded domain where $d=2,3$ and suppose that 
 \begin{equation}\label{4.82}
  \vn\otimes \vn \in W^{1,2}_{loc}\left( \Omega, \mathbb{R}^{3\times 3}\right)
 \end{equation}
 where $|\vn|=1$ almost everywhere in $\Omega$. Then there exists an 
 \begin{equation}\label{4.83}
  \vm \in SBV^2_{loc}\left( \Omega,\mathbb{S}^2\right) 
 \end{equation}
 such that $\vm\otimes \vm=\vn\otimes \vn$ almost everywhere, and $\vm_+=-\vm_-$ $\mathcal{H}^{d-1}$ almost everywhere on $S_\vm$.
 
 \end{proposition}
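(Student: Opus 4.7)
The plan is to handle the possibly non–simply-connected setting by decomposing $\Omega$ into simply connected pieces on which Proposition \ref{3:prop2 Ball-Zarnescu Extension} applies, and then gluing the local orientations together. Since a global Sobolev orientation need not exist on a multiply connected domain, the price paid is a jump set supported on the interfaces, but these interface jumps will automatically be of the form $\vm_+ = -\vm_-$.

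First, I would apply Theorem \ref{whitney_thm} to write $\Omega = \bigcup_{j\geqslant 1} Q_j$ as a countable union of closed cubes with pairwise disjoint interiors satisfying $\mathrm{diam}(Q_j) \leqslant d(Q_j,\partial\Omega) \leqslant 4\,\mathrm{diam}(Q_j)$. Each interior $\mathrm{int}(Q_j)$ is open, bounded, convex and hence simply connected, and since $\overline{Q_j} \subset\subset \Omega$ the hypothesis $\vn \otimes \vn \in W^{1,2}_{loc}(\Omega,\mathbb{R}^{3\times 3})$ yields $\vn \otimes \vn \in W^{1,2}(\mathrm{int}(Q_j),\mathbb{R}^{3\times 3})$. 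Proposition \ref{3:prop2 Ball-Zarnescu Extension} then supplies a lifting $\vm_j \in W^{1,2}(\mathrm{int}(Q_j),\mathbb{S}^2)$ with $\vm_j \otimes \vm_j = \vn \otimes \vn$ almost everywhere, and I would define $\vm := \vm_j$ on $\mathrm{int}(Q_j)$ for every $j$; the union of cube boundaries being $\mathcal{L}^d$-null, this defines $\vm$ almost everywhere on $\Omega$.

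Next, I would verify $\vm \in SBV^2_{loc}(\Omega,\mathbb{S}^2)$ by fixing $U \subset\subset \Omega$ and exploiting the Whitney size bound: any cube with $Q_j \cap U \neq \emptyset$ satisfies $\mathrm{diam}(Q_j) \geqslant \tfrac{1}{4}d(U,\partial\Omega)>0$, so only finitely many cubes $Q_{j_1},\dots,Q_{j_N}$ meet $U$. On each $\mathrm{int}(Q_{j_i}) \cap U$, $\vm$ coincides with a $W^{1,2}$ map, and the identity $|\nabla(\vm \otimes \vm)|^2 = 2|\nabla \vm|^2$ together with $\vm\otimes\vm=\vn\otimes\vn$ gives $\int_U |\nabla \vm|^2 \, dx \leqslant \tfrac{1}{2}\int_U |\nabla(\vn \otimes \vn)|^2\, dx < \infty$. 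Since $|\vm|=1$ the map $\vm$ lies in $L^\infty \cap BV(U)$. The candidate jump set is contained in the finite union $\bigcup_i (\partial Q_{j_i}) \cap U$ of cube faces, which has finite $\mathcal{H}^{d-1}$-measure, so $\mathcal{H}^{d-1}(S_\vm \cap U) < \infty$; the distributional gradient then decomposes as $\nabla \vm\, \mathcal{L}^d + (\vm_+ - \vm_-)\otimes \nu_\vm\, \mathcal{H}^{d-1} \lfloor S_\vm$ with no Cantor contribution, because on each open cube $\vm$ is $W^{1,2}$ (no singular part internally) and the interface contributions are purely jump-type.

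Finally, the jump property is automatic from the construction. At any point $x \in S_\vm$ lying on a common face of adjacent cubes $Q_{j_i}$ and $Q_{j_k}$, the one-sided traces $\vm_\pm(x)$ are exactly the $H^{1/2}$-traces of $\vm_{j_i}$ and $\vm_{j_k}$ from their respective sides, and the identity $\vm_{j_i} \otimes \vm_{j_i} = \vm_{j_k} \otimes \vm_{j_k}$ forces $\vm_+ = \pm\vm_-$ $\mathcal{H}^{d-1}$-a.e.\ on that face. By definition $S_\vm$ is the subset where $\vm_+ \neq \vm_-$, so on $S_\vm$ we have $\vm_+ = -\vm_-$ as required. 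The main obstacle in executing the plan is the rigorous exclusion of a Cantor part in $D\vm$ at the cube interfaces; this must be handled via the standard decomposition for a function that is Sobolev on each piece of a partition into sets of finite perimeter, whose singular distributional derivative is purely jump-type and concentrated on the common boundaries. Once this structural fact is invoked, the proof reduces to the counting and tracing bookkeeping above.
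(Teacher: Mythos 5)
Your plan is essentially the paper's proof: Whitney decomposition, Proposition \ref{3:prop2 Ball-Zarnescu Extension} on each open cube, the diameter lower bound showing any $U\subset\subset\Omega$ meets only finitely many cubes, and then the fact that a map which is $W^{1,2}$ off a set of finite $\mathcal{H}^{d-1}$-measure lies in $SBV^2$ --- the ``structural fact'' you flag as the main obstacle is precisely the result the paper cites at this point, namely \cite[Proposition 4.4, p.\,213]{ambrosio2000functions}, so there is no gap there. The one place you genuinely depart from the paper is the sign condition on $S_\vm$: you observe that the one-sided approximate limits of $\vm$ on a cube face agree $\mathcal{H}^{d-1}$-a.e.\ with the Sobolev traces of the two local liftings, both of which lift the same trace of $\vn\otimes\vn$, so $\vm_+=\pm\vm_-$ pointwise a.e., and since $S_\vm$ is by definition the set where $\vm_+\neq\vm_-$, the sign there is forced to be $-1$. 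This is correct and shorter than what the paper does, which instead uses the $W^{1/2,2}$ regularity of the interface traces and the Gagliardo double-integral representation of the seminorm to show that for each face one of $\{\vm_+=\vm_-\}$ or $\{\vm_+=-\vm_-\}$ must be $\mathcal{H}^{d-1}$-null; that yields the strictly stronger fact that the orientation flip is constant across each interface component, which could matter if one wished to say more about the structure of $S_\vm$, but it is not needed for the proposition as stated, so your elementary route suffices. One cosmetic slip: combining the Whitney estimate $d(Q_j,\partial\Omega)\leqslant 4\,\mathrm{diam}(Q_j)$ with the triangle inequality gives $\mathrm{diam}(Q_j)\geqslant\tfrac15\, d(U,\partial\Omega)$ for cubes meeting $U$, not $\tfrac14$, but this has no bearing on the argument.
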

 
 \begin{proof}
  
  We immediately apply the decomposition theorem, Theorem \ref{whitney_thm}. This instantly gives us a set of cubes $\left.\left\{ \, Q_j\, \subset \Omega\,\right|\,1\leqslant j\,\right\}$ with the given properties. For ease of notation we will denote the interior of each of these closed cubes by $U_j$. We can certainly say that
  \begin{equation}\label{4.84}
   \vn\otimes \vn \in W^{1,2}\left( U_j,\mathbb{R}^{3\times 3} \right).
  \end{equation}
  The interior of each cube is clearly simply-connected so we can apply Theorem \ref{3:prop2 Ball-Zarnescu Extension} to find some $\vm \in W^{1,2}\left( U_j,\mathbb{S}^2\right)$ such that $\vm\otimes \vm =\vn\otimes \vn$ almost everywhere. We can do this in each of the cubes to find some 
  \begin{equation}\label{4.85}
   \vm\in W^{1,2}\left( \bigcup_1^\infty U_j,\mathbb{S}^2 \right).
  \end{equation}
  All we need to do to complete the proof is to investigate the behaviour of $\vm$ over the boundaries of these cubes to show that it is in SBV$_{loc}$. To do that we take some $V\subset\subset \Omega$ and begin by showing
  \begin{equation}\label{4.86}
   \left.\inf\left\{ \, \text{diam}Q_j\,\right|\,Q_j\cap V\neq \emptyset\,\right\} >0.
  \end{equation}
  We suppose for a contradiction that \eqref{4.86} doesn't hold. Then that means we can find a sequence of cubes $Q_{j_k}$ such that $\text{diam}\left( Q_{j_k}\right) \rightarrow 0$ and $Q_{j_k}\cap V \neq \emptyset$. However because $V$ is compactly contained in $\Omega$ 
  \begin{equation}\label{4.87}
   d(V,\partial \Omega)\geqslant \epsilon,
  \end{equation}
  for some $\epsilon>0$. The convergence of the diameters means that there is some $K$ such that for all $k\geqslant K$ $\text{diam}\left( Q_{j_k}\right) \leqslant \frac{\epsilon}{2}$, and therefore $d\left(Q_{j_k},\partial \Omega\right) \geqslant \frac{\epsilon}{2}$ as well. However we can see that this contradicts the third property of these cubes. Therefore \eqref{4.86} holds. Crucially this means that because our domain $\Omega$ is bounded, the set $V$ only intersects finitely many of these cubes and therefore 
  \begin{equation}\label{4.88}
   V\subset \bigcup_1^N Q_j.
  \end{equation}
 We define $U:=\bigcup_1^N \left( V\cap U_j\right) =V\setminus \left( \bigcup_1^N \partial Q_j\right)$, and we note that 
 \begin{equation}\label{4.89}
  \vm \in W^{1,2}\left( U,\mathbb{S}^2 \right)=W^{1,2}\left( V\setminus \left( \bigcup_1^N \partial Q_j\right),\mathbb{S}^2 \right).
 \end{equation}
 Equation \eqref{4.89} together with the fact that $\mathcal{H}^{d-1}\left( \bigcup_1^N \partial Q_j \right)<\infty$ means we can apply Ambrosio's result \cite[Proposition 4.4, p.213]{ambrosio2000functions} to deduce that 
 \begin{equation}\label{4.90}
  \vm \in SBV^2\left(V,\mathbb{S}^2\right) 
 \end{equation}
 with $\mathcal{H}^{d-1}\left( S_\vm\setminus  \bigcup_1^N \partial Q_j\right)=0$. Finally we just need to prove that $\vm_+=-\vm_-$ on the jump set $S_\vm$. Take some $x\in \partial Q_i \cap V$. We suppose that we can find an $\epsilon>0$ sufficiently small so that $B(x,\epsilon)$ only intersects two cubes, say $Q_i$ and $Q_j$. Note that this property is true for $\mathcal{H}^{d-1}$ almost every element of $\bigcup_1^N \partial Q_j$ because we are only discounting the edges and vertices of the cubes. We define the set $W:= B(x,\epsilon)\cap \bigcup_1^N \partial Q_j$, then by the Sobolev trace theorem we know that 
 \begin{equation}\label{4.91}
  \vm_-,\vm_+ \in W^{\frac{1}{2},2}\left( W,\mathbb{S}^2\right).
 \end{equation}
 If we define the sets 
 \begin{equation}\label{4.92}
  A:=\left\{\, x\in W\,|\,\vm_-=\vm_+\,\right\} \quad \text{and} \quad B:=\left\{\, x\in W\,|\,\vm_-=-\vm_+\,\right\},
 \end{equation}
then because $\vm_+ \otimes \vm_+ = \vm_- \otimes \vm_-$ $\mathcal{H}^{d-1}$ almost everywhere on $W$, clearly it must be the case that $\mathcal{H}^{d-1}\left( W\setminus\left( A\cup B\right)\right)=0$. Then we can use the integral representation for the fractional Sobolev space $W^{\frac{1}{2},2}$ to prove the conclusion. Equation \eqref{4.91} implies
\begin{equation}\label{4.93}
 \begin{split}
  \infty &> \int_W \int_W \frac{|\vm_-(x)-\vm_-(y)|^2}{|x-y|^d}\,dx\,dy \\
  &\geqslant \int_A \int_B \frac{|\vm_-(x)-\vm_-(y)|^2}{|x-y|^d}\,dx\,dy \\
  &= \int_A \int_B \frac{|\vm_+(x)+\vm_+(y)|^2}{|x-y|^d}\,dx\,dy.
 \end{split}
\end{equation}
However this can only be true if $\mathcal{H}^{d-1}(A)=0$ or $\mathcal{H}^{d-1}(B)=0$. Thus on $\bigcup_1^N Q_j$ we have either $\vm_-=\vm_+$ or $\vm_-=-\vm_+$ which means that on the jump set $S_\vm$ we must have that $\vm_+=-\vm_-$.

 \end{proof}
 
 This lifting result will be the crucial one when proving the equivalence between the uniaxial Landau-de Gennes theory and Ericksen's theory. We would however like to have an appropriate analogue to Corollary \ref{3: corollary 1} in the two and three dimensional case where we remove the locality assumptions from the previous result. Unfortunately we cannot do this because the Whitney decomposition theorem breaks up the domain into infinitely many simply connected pieces. The union of the boundaries of these components may have infinite $\mathcal{H}^{d-1}$ measure meaning that we are unable to apply \cite[Proposition 4.4, p.213]{ambrosio2000functions}. However we simply note here that it is possible to circumvent these difficulties if we have a Lipschitz domain but this is only a side note of this paper.
 
 \vspace{3 mm}

 %
 %
 %
 % Uniaxial Landau-de Gennes Results
 %
 %
 %
 
 Just as in Section \ref{sec:1D results}, now that we have established our lifting results we can use them to prove two equivalence results between the Landau-de Gennes theory and Ericksen's theory. However there is a very subtle but thorny technicality in multiple dimensions which needs to be addressed.
 
 \subsection*{Technicality}
 
 In the following result, Theorem \ref{3:2,3D Uniaxial Ericksen}, we will show the relationship between the uniaxial Landau-de Gennes theory and Ericksen's theory. By comparison with the one-dimensional result it would not be unreasonable to suppose that the set of admissible functions for the Ericksen problem should be 
 \begin{equation}\label{4.93.9}
  \mathcal{B}:=\left\{\,s\in \,W^{1,2}\left(\Omega\right),\,\,\vn \in SBV^2_{loc} \left( \Omega\setminus C_s,\mathbb{S}^2 \right) \, \left| 
\, \int_\Omega s^2 |\nabla \vn|^2 \,dx <\infty,\,\, \vn_+=-\vn_-\,\, {\rm on} \,\, S_\vn
 \,\right\}. \right.
 \end{equation}
  If we were to take a pair $(s,\vn) \in \mathcal{B}$, in order to prove the equivalence of the admissible functions, we would need to show that $s\vn \otimes \vn \in W^{1,2}\left( \Omega ,\mathbb{R}^{3 \times 3} \right)$. Since $SBV^2_{loc}\cap L^\infty$ is closed under products we would find $\vn \otimes \vn \in SBV^2_{loc} \left(\Omega \setminus C_s,\mathbb{R}^{3\times 3} \right)$. The fact that $\vn_+ = -\vn_-$ on $S_\vm$ would then indicate that $\vn \otimes \vn \in W^{1,2}_{loc}\left( \Omega \setminus C_s,\mathbb{R}^{3 \times 3} \right)$. Finally the joint regularity would imply that 
  \begin{equation}\label{4.93.10}
   s\vn \otimes \vn \in W^{1,2}\left( \Omega \setminus C_s, \mathbb{R}^{3\times 3} \right).
  \end{equation}
  From an intuitive point of view it is only a short step to deduce that \eqref{4.93.10} implies $s\vn \otimes \vn \in W^{1,2}\left( \Omega ,\mathbb{R}^{3 \times 3} \right)$. However lots of subtle points about the fine regularity of Sobolev functions come into play. We would need Lemma \ref{3:lemma3} to hold without the additional continuity assumption. This would be true if we could show, for example, that for $f \in W^{1,p}\left( \Omega \right)$
  \begin{equation}\label{4.93.11}
   \mathcal{H}^{d-1} \left( C_f \setminus \left\{ f = 0\right\} \right) =0.
  \end{equation}
 We conjecture that \eqref{4.93.11} always holds because Sobolev functions should not have a discontinuity set with Hausdorff dimension greater than or equal to $d-1$. However presently we are unable to prove this claim. Therefore in the multi-dimensional results below we will simply assume that $s(\vn \otimes \vn) \in W^{1,2}\left( \Omega , \mathbb{R}^{3\times 3} \right)$ as our joint regularity condition. The reasoning above shows that this is not an oversimplification; it is only used to deal with this small technicality which we believe to be true in any case.
 
 \subsection{Uniaxial Landau-de Gennes}
 
  \begin{theorem}\label{3:2,3D Uniaxial Ericksen}
  Let $\Omega\subset \mathbb{R}^d$ be a bounded domain with $d=2,3$. Then minimising $I_L(\vQ)$ over
 \begin{equation}\label{4.94}
  \mathcal{A}:=\left\{\, \vQ \in W^{1,2}\left( \Omega,\mathbb{R}^{3\times 3} \right)\,\left|\,\vQ^T = \vQ,\,\, {\rm Tr}\,\vQ = 0,\,\, \vQ\, \text{uniaxial}\,\right\} \right.
 \end{equation}
 is equivalent to minimising $I_E(s,\vn)$ over
 \begin{equation}\label{4.95}
  \mathcal{B}:=\left\{\,s\in \,W^{1,2}\left(\Omega\right),\,\,\vn \in SBV^2_{loc} \left( \Omega\setminus C_s,\mathbb{S}^2 \right) \, \left| 
\, s(\vn \otimes \vn) \in W^{1,2}\left(\Omega ,\mathbb{R}^{3\times 3} \right)
 \,\right\}. \right.
 \end{equation}
 
 \end{theorem}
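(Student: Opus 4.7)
The plan is to follow the same blueprint as in Theorem \ref{3:1D-Uniaxial-Ericksen}, establishing a one-to-one correspondence between $\mathcal{A}$ and $\mathcal{B}$ and then invoking the pointwise identity computed in \eqref{4.46}, which is purely algebraic and hence dimension-independent. There are two structural changes from the $1$D argument: the set $\Omega \setminus C_s$ is now open but typically not simply connected, so the director lifting must be performed in $SBV^2_{loc}$ via Proposition \ref{3:prop3 Line field Vector field SBV} rather than in $W^{1,2}_{loc}$ via Proposition \ref{3:prop1}; and the joint regularity is now imposed directly as $s(\vn\otimes\vn) \in W^{1,2}(\Omega,\mathbb{R}^{3\times 3})$, as anticipated in the technicality discussion preceding the statement.

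For the direction $\mathcal{A} \to \mathcal{B}$ I would take $\vQ \in \mathcal{A}$ and write it in its uniaxial form $\vQ = s(\vn\otimes\vn - \tfrac{1}{3}I)$ with $|\vn| = 1$. Applying Lemma \ref{3:lemma4} pointwise yields the estimate $|\vQ(x) - \vQ(y)|^{2} \geqslant \tfrac{1}{6}|s(x) - s(y)|^{2}$, which combined with the difference quotient characterisation (Proposition \ref{difference quotient}) gives $s \in W^{1,2}(\Omega)$. The identity $s(\vn\otimes\vn) = \vQ + \tfrac{s}{3}I$ then immediately supplies the joint regularity condition. To produce the director I would apply Lemma \ref{3:lemma6} with $f = s$ and $g = \vn\otimes\vn$ (which is pointwise bounded by $1$) to conclude that $\vn\otimes\vn \in W^{1,2}_{loc}(\Omega \setminus C_s, \mathbb{R}^{3\times 3})$. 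Proposition \ref{3:prop3 Line field Vector field SBV} applied on the open set $\Omega \setminus C_s$ then produces a unit vector field $\vm \in SBV^{2}_{loc}(\Omega \setminus C_s, \mathbb{S}^{2})$ with $\vm\otimes\vm = \vn\otimes\vn$ almost everywhere, which is the desired element of $\mathcal{B}$.

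The reverse inclusion $\mathcal{B} \to \mathcal{A}$ is essentially immediate from the way $\mathcal{B}$ has been set up: one defines $\vQ := s(\vn\otimes\vn) - \tfrac{s}{3}I$, which is the difference of two $W^{1,2}$ matrix fields and is pointwise symmetric, traceless, and uniaxial. With the correspondence in place, the computation in \eqref{4.46} is valid pointwise almost everywhere on $\Omega \setminus C_s$, since both $s$ and $\vn\otimes\vn$ are there classically (weakly) differentiable; on $C_s$ the scalar $s$ vanishes and every term in either Lagrangian involving $\vn$ is multiplied by $s^{2}$, so the full equality $I_{L}(\vQ) = I_{E}(s,\vm)$ holds and the two minimisation problems coincide.

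The main obstacle, and indeed the conceptual point of the theorem, is precisely the step where the director must be extracted from the line field. In $d \geqslant 2$ there is no analogue of Proposition \ref{3:prop1} keeping the lift in $W^{1,2}_{loc}$, because $\Omega \setminus C_s$ may be multiply connected and the line field $\vn\otimes\vn$ may be genuinely non-orientable across loops in $\Omega \setminus C_s$; one must pay for this by admitting a jump set $S_{\vm}$ on which $\vm_{+} = -\vm_{-}$, and Proposition \ref{3:prop3 Line field Vector field SBV} (via the Whitney decomposition) is the technical device that makes the exchange precise. The secondary nuisance, namely the discrepancy between the closed set $C_{s}$ and $\{s = 0\}$, is side-stepped by encoding $s(\vn\otimes\vn) \in W^{1,2}(\Omega,\mathbb{R}^{3\times 3})$ into the definition of $\mathcal{B}$, which removes the need for a continuity-free version of Lemma \ref{3:lemma3}.
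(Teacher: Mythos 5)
Your proposal is correct and follows essentially the same route as the paper's own proof: Lemma \ref{3:lemma4} plus the difference quotient characterisation for $s\in W^{1,2}(\Omega)$, Lemma \ref{3:lemma6} for $\vn\otimes\vn\in W^{1,2}_{loc}(\Omega\setminus C_s)$, Proposition \ref{3:prop3 Line field Vector field SBV} for the SBV lift, a trivial reverse inclusion from the joint-regularity hypothesis built into $\mathcal{B}$, and the algebraic identity \eqref{4.46} for the Lagrangians. Your explicit observation that $s(\vn\otimes\vn)=\vQ+\tfrac{s}{3}I$ supplies the joint regularity condition is a minor (correct) clarification of a step the paper leaves implicit.
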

 
 \begin{proof}
  
  This proof closely follows that of Theorem \ref{3:1D-Uniaxial-Ericksen} so we will skim over the details of this proof which have already been addressed there. For ease of notation we denote $\Omega':=\Omega\setminus C_s$. We take a $\vQ \in \mathcal{A}$ then we know that we can write $\vQ$ as $s\left( \vn \otimes \vn-\frac{1}{3}Id \right)$. We can immediately use the logic from \eqref{4.38} and \eqref{4.39} to deduce that
\begin{equation}\label{4.96}
s\in W^{1,2}\left( \Omega\right).
\end{equation}
Similarly we can use the same reasoning from the proof of Theorem \ref{3:1D-Uniaxial-Ericksen} again to deduce the regularity of the line field. Using Lemma \ref{3:lemma6} we find
\begin{equation}\label{4.97}
\vn\otimes\vn \in W^{1,2}_{loc} \left( \Omega',\mathbb{R}^{3\times 3} \right).
\end{equation}
We are now in a position to use our previous lifting result to convert this line-field into a vector-field. Applying Proposition \ref{3:prop3 Line field Vector field SBV} to $\vn\otimes\vn$ tells us that there exists some
\begin{equation}\label{4.98}
\vm \in SBV^2_{loc}\left( \Omega',\mathbb{S}^2 \right)\,\, \text{such that}\,\, \vm\otimes \vm =\vn \otimes \vn \,\,a.e.
\end{equation}
and $\vm_+=-\vm_-$ $\mathcal{H}^{d-1}$ almost everywhere on the jump set $S_\vm$. This completes the inclusion as we have shown that $(s,\vm) \in \mathcal{B}$. The reverse inclusion is trivial, we take a pair $(s,\vn) \in \mathcal{B}$ and then it is clear from their definitions that 
\begin{equation}\label{4.99}
 Q:=s\left( \vn \otimes \vn - \frac{1}{3}I \right) \in W^{1,2} \left( \Omega ,\mathbb{R}^{3 \times 3} \right).
\end{equation}
This establishes the link between the sets of admissible functions and the formal equivalence of the Lagrangians is exactly the same as in Theorem \ref{3:1D-Uniaxial-Ericksen}. 

\end{proof}

%
%
%
% Biaxial - (Double) Ericksen
%
%
%

 \subsection{Biaxial Landau-de Gennes}
 
 \begin{theorem}\label{3:2,3D Biaxial Ericksen}
  Let $\Omega \subset \mathbb{R}^d$ be a bounded domain with $d=2,3$. Then minimising $I_L(\vQ)$ over 
  \begin{equation}\label{4.104}
   \mathcal{A}:=\left\{ \vQ \in W^{1,2}\left( \Omega,\mathbb{R}^{3\times3}\right)\, \left| \, \vQ^T = \vQ,\,\, {\rm Tr}\, \vQ=0\, \right.\right\}.
  \end{equation}
  is equivalent to minimising $I_{BE}(\vn,\vm,s_1,s_2)$ over 
  \begin{equation}\label{4.105}
     \mathcal{B}:=\left\{\,
     \begin{array}{c}s_1,-s_2\in W^{1,2}\left(\Omega,[0,\infty)\right),\,\, \\
     \vn \in SBV^2_{loc} \left( \Omega\setminus C_{s_1},\mathbb{S}^2 \right),\,\, \vm \in SBV^2_{loc} \left( \Omega\setminus C_{s_2},\mathbb{S}^2 \right)          
     \end{array}
 \, \left| 
\, \begin{array}{c}s_1 (\vn \otimes \vn) \in W^{1,2}\left( \Omega ,\mathbb{R}^{3\times 3} \right) \\
    s_2 (\vm \otimes \vm) \in W^{1,2}\left( \Omega ,\mathbb{R}^{3\times 3} \right)
   \end{array}
 \right\}. \right.
  \end{equation}

 \end{theorem}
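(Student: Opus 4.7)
The plan is to follow the structure of Theorem \ref{3:1D-Biaxial-Ericksen}, substituting the SBV lifting of Proposition \ref{3:prop3 Line field Vector field SBV} in place of the one-dimensional Proposition \ref{3:prop1}, exactly as Theorem \ref{3:2,3D Uniaxial Ericksen} extended Theorem \ref{3:1D-Uniaxial-Ericksen} to higher dimensions. For the forward inclusion, given $\vQ \in \mathcal{A}$, I would write it pointwise via its spectral decomposition in the form $\vQ = s_1(\vn \otimes \vn - \tfrac{1}{3}I) + s_2(\vm \otimes \vm - \tfrac{1}{3}I)$ with $s_1 = \lambda_{\max}(\vQ) - \lambda_{\mathrm{mid}}(\vQ) \geqslant 0$ and $s_2 = \lambda_{\min}(\vQ) - \lambda_{\mathrm{mid}}(\vQ) \leqslant 0$, where $\vn$ and $\vm$ are unit eigenvectors corresponding respectively to the largest and smallest eigenvalues of $\vQ$.

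Next I would invoke Lemma \ref{3:lemma5} to produce a pointwise lower bound for $|\vQ(x) - \vQ(y)|^2$; combined with Proposition \ref{difference quotient} this immediately yields $s_1, s_2 \in W^{1,2}(\Omega)$. For the line fields, I fix any $U \subset\subset \Omega \setminus C_{s_1}$: the reasoning of Lemma \ref{3:lemma6} shows that $s_1$ is bounded below by some $\epsilon > 0$ on $U$, so $s_1(x)s_1(y) \geqslant \epsilon^2$ throughout $U \times U$. The line-field term in Lemma \ref{3:lemma5} then gives $\vn \otimes \vn \in W^{1,2}(U, \mathbb{R}^{3\times 3})$ via the difference quotient characterisation, whence $\vn \otimes \vn \in W^{1,2}_{loc}(\Omega \setminus C_{s_1}, \mathbb{R}^{3\times 3})$; identically $\vm \otimes \vm \in W^{1,2}_{loc}(\Omega \setminus C_{s_2}, \mathbb{R}^{3\times 3})$. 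Two independent applications of Proposition \ref{3:prop3 Line field Vector field SBV} then produce $SBV^2_{loc}$ liftings $\tilde{\vn}$ and $\tilde{\vm}$ on $\Omega \setminus C_{s_1}$ and $\Omega \setminus C_{s_2}$ respectively, each satisfying the head-to-tail jump condition.

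For the reverse inclusion I would take $(s_1, s_2, \vn, \vm) \in \mathcal{B}$ and directly set $\vQ := s_1(\vn \otimes \vn - \tfrac{1}{3}I) + s_2(\vm \otimes \vm - \tfrac{1}{3}I)$. The two joint-regularity hypotheses of $\mathcal{B}$ together with $s_1, s_2 \in W^{1,2}(\Omega)$ immediately give $\vQ \in W^{1,2}(\Omega, \mathbb{R}^{3\times 3})$, while $\vQ^T = \vQ$ and $\text{Tr}\, \vQ = 0$ are built into the tensorial form. The equivalence of the Lagrangians $I_L(\vQ)$ and $I_{BE}(\vn, \vm, s_1, s_2)$ is then the same direct but lengthy substitution calculation alluded to at the end of the proof of Theorem \ref{3:1D-Biaxial-Ericksen}; I would similarly omit it for brevity.

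The main obstacle is verifying the joint regularity conditions in the forward direction. From $\vQ \in W^{1,2}$ it is clean to separate $s_1(\tilde{\vn} \otimes \tilde{\vn})$ and $s_2(\tilde{\vm} \otimes \tilde{\vm})$ as elements of $W^{1,2}_{loc}$ on the carriers $\Omega \setminus C_{s_1}$ and $\Omega \setminus C_{s_2}$, but these carriers overlap only on the strictly biaxial set, so neither factor is directly readable off of $\vQ$ as a global $W^{1,2}(\Omega)$ object. This is precisely the technicality flagged before Theorem \ref{3:2,3D Uniaxial Ericksen}, now appearing twice; a fully rigorous argument would require controlling pathological sets of the form $C_{s_i} \setminus \{s_i = 0\}$. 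Following the author's convention, I would incorporate the joint regularity as part of the definition of $\mathcal{B}$ rather than derive it.
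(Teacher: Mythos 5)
Your construction of the decomposition, the deduction $s_1,s_2\in W^{1,2}(\Omega)$, the local regularity of the line fields away from $C_{s_1}$ and $C_{s_2}$, and the two applications of Proposition \ref{3:prop3 Line field Vector field SBV} all match the paper. The gap is in how you finish the forward inclusion. Membership in $\mathcal{B}$ \emph{requires} $s_1(\vn\otimes\vn)\in W^{1,2}(\Omega,\mathbb{R}^{3\times 3})$ and $s_2(\vm\otimes\vm)\in W^{1,2}(\Omega,\mathbb{R}^{3\times 3})$, so when you start from $\vQ\in\mathcal{A}$ these conditions must be \emph{verified} for the quadruple you build; declaring that you will ``incorporate the joint regularity as part of the definition of $\mathcal{B}$ rather than derive it'' does not close this step — that convention only makes the \emph{reverse} inclusion trivial (which is exactly why the author adopted it: the technicality flagged before Theorem \ref{3:2,3D Uniaxial Ericksen}, about $C_f\setminus\{f=0\}$ and Lemma \ref{3:lemma3} without continuity, arises when passing from $(s,\vn)$ back to $\vQ$, not in the direction you are worried about). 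As it stands, your forward inclusion only produces the $W^{1,2}_{loc}$ information on the two carriers, which is strictly weaker than what $\mathcal{B}$ demands.

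The paper closes this by rewriting the conclusion of Lemma \ref{3:lemma5} in the sharpened form \eqref{4.107.1}, which in addition to the scalar and weighted line-field terms retains the terms $\tfrac{1}{6}\left|s_1\vn\otimes\vn(x)-s_1\vn\otimes\vn(y)\right|^2$ and $\tfrac{1}{6}\left|s_2\vm\otimes\vm(x)-s_2\vm\otimes\vm(y)\right|^2$ on the right-hand side. Applying the difference quotient characterisation (Proposition \ref{difference quotient}) to these last two terms gives, directly and globally,
\begin{equation}
 s_1\,\vn\otimes\vn \in W^{1,2}\left(\Omega,\mathbb{R}^{3\times 3}\right), \qquad s_2\,\vm\otimes\vm \in W^{1,2}\left(\Omega,\mathbb{R}^{3\times 3}\right),
\end{equation}
with no appeal to the conjectural control of $C_{s_i}\setminus\{s_i=0\}$; Lemma \ref{3:lemma6} and Proposition \ref{3:prop3 Line field Vector field SBV} then supply the $SBV^2_{loc}$ liftings, and since the lifted fields satisfy $\tilde{\vn}\otimes\tilde{\vn}=\vn\otimes\vn$ and $\tilde{\vm}\otimes\tilde{\vm}=\vm\otimes\vm$ almost everywhere, the products above are unchanged and the joint regularity conditions of $\mathcal{B}$ hold for the constructed quadruple. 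To repair your argument you need this (or an equivalent) quantitative step; the rest of your outline, including the reverse inclusion and the omitted Lagrangian computation, is consistent with the paper.
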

 
 \begin{proof}
  
  The proof of this result follows similar lines to that of Theorem \ref{3:1D-Biaxial-Ericksen}. We begin by taking $\vQ \in \mathcal{A}$. The variable $\vQ$ can be written as
  \begin{equation}\label{4.106}
   \vQ:=s_1\left( \vn\otimes\vn-\frac{1}{3}I\right) - s_2\left( \vm\otimes\vm-\frac{1}{3}I\right),
  \end{equation}
  where $s_1\geqslant 0$, $s_2\leqslant 0$ and $|\vn|=|\vm|=1$. Using the logic from the proof of Theorem \ref{3:1D-Biaxial-Ericksen}, straight away we deduce
  \begin{equation}\label{4.107}
   s_1,s_2 \in W^{1,2}\left(\Omega\right).
  \end{equation}
  With a little consideration we see that we can write the conclusion of Lemma \ref{3:lemma5} in a slightly different form.
  \begin{equation}\label{4.107.1}
   \begin{split}
    |\vQ (x)-\vQ (y) |^2 \geqslant &
    \frac{1}{6} \left( (s_1(x)-s_1(y))^2 + (s_2(x)-s_2(y))^2 \right)+ \frac{5}{6}s_1(x)s_1(y)\left| \vn \otimes \vn(x)- \vn\otimes \vn(y) \right|^2\\ &
    +\frac{5}{6}s_2(x)s_2(y)\left| \vm \otimes \vm(x)- \vm\otimes \vm(y) \right|^2 \\ &
    +\frac{1}{6}\left|s_1 \vn \otimes \vn (x) - s_1 \vn \otimes \vn(y) \right|^2+\frac{1}{6}\left|s_2 \vm \otimes \vm (x) - s_2 \vm \otimes \vm(y) \right|^2
   \end{split}
  \end{equation}
  Therefore by applying our difference quotient logic to the final two terms in the equation above we deduce that
  \begin{equation}\label{4.108}
   s_1\vn\otimes \vn\in W^{1,2}\left( \Omega,\mathbb{R}^{3\times3}\right)\,\, \text{and}\,\,  s_2\vm\otimes \vm\in W^{1,2}\left( \Omega,\mathbb{R}^{3\times3}\right).
  \end{equation}
  Then we can apply both Lemma \ref{3:lemma6} and Proposition \ref{3:prop3 Line field Vector field SBV} to each of these functions, and find
  \begin{equation}\label{4.109}
  \vu\in SBV^2_{loc}\left( \Omega\setminus C_{s_1},\mathbb{S}^{2}\right)\,\, \text{and}\,\,  \vv\in SBV^2_{loc}\left( \Omega\setminus C_{s_2},\mathbb{S}^{2}\right).
  \end{equation}
  where $\vu\otimes\vu=\vn\otimes\vn$ and $\vv\otimes\vv = \vm\otimes\vm$ almost everywhere. This concludes the first inclusion. For the reverse one we take some $(\vn,\vm,s_1,s_2)\in \mathcal{B}$ then using joint regularity assumptions that
  \begin{equation}\label{4.111}
   s_1\vn\otimes\vn,s_2\vm\otimes\vm \in W^{1,2}\left( \Omega,\mathbb{R}^{3\times 3} \right),
  \end{equation}
  we clearly have 
  \begin{equation}\label{4.112}
   \vQ:= s_1\left( \vn\otimes\vn -\frac{1}{3}I\right) - s_2\left( \vm\otimes\vm-\frac{1}{3}I\right)\in W^{1,2}\left( \Omega,\mathbb{R}^{3\times 3}\right).
  \end{equation}
  This finalises the one-to-one correspondence of $\mathcal{A}$ and $\mathcal{B}$. The formal equivalence of the Lagrangians was already established in the proof of Theorem \ref{3:1D-Biaxial-Ericksen} so the assertion is complete.
  
 \end{proof}
 
 %
 %
 %
 % Alternative formulation section
 %
 %
 %
 
 \section{A Different Approach}\label{section: new approach}
 
 These theorems provide an interesting insight into the precise function space equivalences between the Landau-de Gennes, Ericksen and Oseen-Frank theories. However as we see from the statement of Theorem \ref{3:2,3D Uniaxial Ericksen} for example, it would be a very great analytical exercise to find minimisers of the Ericksen energy given that the director $\vn$ and the scalar order parameter $s$ are not independent. The biaxial energy \eqref{4.6} is yet another order of magnitude more difficult if one were looking for any sort of explicit minimiser. If, in isolation, we were to try and minimise
 \begin{equation}\label{4.113}
  I_E(\vn,s)=\int_\Omega 2s^2\left( |\nabla\vn|^2+2t\vn\cdot\nabla \times \vn+t^2\right) +\frac{2}{3}|\nabla s|^2+\sigma(s)\,dx.
  \end{equation}
over
\begin{equation}\label{4.114}
 \mathcal{B}:=\left\{\,s\in \,W^{1,2}\left(\Omega\right),\,\,\vn \in SBV^2_{loc} \left( \Omega\setminus C_s,\mathbb{S}^2 \right) \, \left| 
\, s\vn \otimes \vn \in W^{1,2}\left( \Omega ,\mathbb{R}^{3\times3} \right)\,
 \right\}. \right.
\end{equation}
it would not even be clear whether we could show the existence of a global minimiser through the direct method of the calculus of variations. In fact, Theorem \ref{3:2,3D Uniaxial Ericksen} ensures that a global minimiser does exist. Thus in this section we will use the above results as motivation to justify a new liquid crystal continuum model which is tractable for an analytical approach. For example, with the scalar order parameter we are not so much interested in its exact value, but where it vanishes, as this represents a change in phase of the liquid crystal sample. Therefore we are principally interested in the two different states of $s$: where it is zero, and where it is non-zero. Hence we will use a $\Gamma$-convergence result to retain the phase change information whilst simplifying the energy greatly. We recall the definition of $\Gamma$-convergence.

\begin{definition}[$\Gamma$-convergence]\label{def: gamma convergence}
 Let $X$ be a metric space. Then $f_j:X\rightarrow \mathbb{R}\cup\left\{ \pm \infty\right\}$ $\Gamma$-converges to $f:X\rightarrow \mathbb{R}\cup\left\{ \pm \infty\right\}$, if for every $x\in X$ we have the following two properties.
 \begin{enumerate}
  \item If $x_j \rightarrow x$ in $X$ then $f(x)\leqslant \liminf_j f_j(x_j)$.
  
  \item There exists some sequence $x_j \rightarrow x$ in $X$ such that $f(x)\geqslant \limsup_j f_j(x_j)$.

 \end{enumerate}
\end{definition}

 %
 %
 %
 % Gamma Convergence Result
 %
 %
 %

\subsection{$\Gamma$-Convergence Result}

Suppose that $d\in \left\{1,2,3\right\}$ and $\Omega\subset \mathbb{R}^d$ is an open set, with Lipschitz boundary. Suppose that $\sigma: \mathbb{R}\rightarrow [0,\infty)$ is a continuous function with exactly two distinct zeroes at $s_-$ and $s_+$. Let $\epsilon>0$ and $F_\epsilon, F:L^1\left(\Omega,\mathbb{R}\right)\rightarrow \overline{\mathbb{R}}$ be defined by 
\begin{equation}\label{4.117}
 F_\epsilon (v):=\left\{ \begin{array}{lcc} \int_\Omega \frac{2\epsilon}{3}|\nabla v|^2+\frac{1}{\epsilon}\sigma(v)\,dx & & v\in W^{1,2}(\Omega) \\
 \infty & & \text{otherwise}
\end{array} \right.
\end{equation}
and
\begin{equation}\label{4.118}
 F(v):=\left\{ \begin{array}{lcc} C\mathcal{H}^{d-1}\left( S_v\right) &  & v\in SBV^2\left( \Omega, \left\{ s_-,s_+\right\}\right) \\ \infty & & \text{otherwise} \end{array} \right.
\end{equation}
where $C=2\sqrt{\frac{2}{3}}\int_{s_-}^{s_+} \sqrt{\sigma(\lambda)}\, d\lambda$.

\begin{theorem}\label{gamma convergence}
 The functional $F_\epsilon$ $\Gamma$-converges to $F$ with the strong topology on $L^1(\Omega,\mathbb{R})$. Furthermore if $u_\epsilon$ is a minimiser of $F_\epsilon$ and  $u_\epsilon \rightarrow u$ as $\epsilon \rightarrow 0$ in $L^1(\Omega)$ then $u$ is a minimiser of $F$.
\end{theorem}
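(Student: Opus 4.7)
The result is essentially a Modica--Mortola $\Gamma$-convergence theorem for a two-well potential, adapted to the SBV framework. My plan is to follow the classical Modica--Mortola trick: define the antiderivative $\Phi(t):=\int_{s_-}^{t}\sqrt{\sigma(\lambda)}\,d\lambda$, and exploit the pointwise inequality
\begin{equation}
\tfrac{2\epsilon}{3}|\nabla v|^2+\tfrac{1}{\epsilon}\sigma(v)\;\geqslant\;2\sqrt{\tfrac{2}{3}}\,|\nabla v|\sqrt{\sigma(v)}\;=\;2\sqrt{\tfrac{2}{3}}\,|\nabla(\Phi\circ v)|,
\end{equation}
so that $F_\epsilon(v)\geqslant 2\sqrt{\tfrac{2}{3}}\int_\Omega |\nabla(\Phi\circ v)|\,dx$. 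This reduces the liminf analysis to BV lower semicontinuity applied to $\Phi\circ v_\epsilon$.

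For the liminf inequality, suppose $v_\epsilon\to v$ in $L^1(\Omega)$ with $\liminf F_\epsilon(v_\epsilon)<\infty$. First, the bulk term $\frac{1}{\epsilon}\int\sigma(v_\epsilon)\,dx$ being bounded in $\epsilon$ forces $\sigma(v)=0$ a.e., so $v\in\{s_-,s_+\}$ a.e. By continuity of $\Phi$ we have $\Phi\circ v_\epsilon\to\Phi\circ v$ in $L^1$, and the above pointwise estimate gives a uniform BV bound on $\Phi\circ v_\epsilon$. Hence $\Phi\circ v\in BV(\Omega)$, and since it takes only the two values $\Phi(s_-)=0$ and $\Phi(s_+)$, it is a characteristic function (times a constant), so $v\in SBV^2(\Omega,\{s_-,s_+\})$ with $|D(\Phi\circ v)|=\Phi(s_+)\mathcal H^{d-1}\!\restriction\! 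S_v$. Lower semicontinuity of BV total variation then yields
\begin{equation}
\liminf_{\epsilon\to 0}F_\epsilon(v_\epsilon)\;\geqslant\;2\sqrt{\tfrac{2}{3}}\cdot\Phi(s_+)\,\mathcal H^{d-1}(S_v)\;=\;F(v),
\end{equation}
which is exactly the stated constant $C$.

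For the limsup inequality I would build a standard recovery sequence. By density it suffices to treat the case where $S_v$ is (locally) a smooth hypersurface. Near $S_v$, let $d(x)$ denote the signed distance to $S_v$ and let $\gamma_\epsilon$ be a rescaled optimal transition profile: $\gamma_\epsilon(t)=\gamma(t/\epsilon)$, where $\gamma:\mathbb R\to[s_-,s_+]$ is the one-dimensional minimiser of $\int\tfrac{2}{3}|\gamma'|^2+\sigma(\gamma)\,dt$ with $\gamma(-\infty)=s_-$, $\gamma(+\infty)=s_+$, cut off and glued to the constant values $s_\pm$ outside a thin tubular neighbourhood of width of order $\epsilon|\log\epsilon|$. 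A direct coarea-type computation using $dx=dt\otimes d\mathcal H^{d-1}$ in the tube gives $F_\epsilon(v_\epsilon)\to C\mathcal H^{d-1}(S_v)$, and the error from the gluing and from approximating a general $SBV^2$ function by one with polyhedral jump set is negligible.

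Finally, convergence of minimisers: if $u_\epsilon$ minimises $F_\epsilon$ and $u_\epsilon\to u$ in $L^1$, then boundedness of $F_\epsilon(u_\epsilon)$ (compared against a fixed competitor with finite limsup) combined with the liminf inequality gives $u\in SBV^2(\Omega,\{s_-,s_+\})$ and $F(u)\leqslant\liminf F_\epsilon(u_\epsilon)$; comparing with an arbitrary $w$ and its recovery sequence $w_\epsilon$ gives $F(u)\leqslant F(w)$ by the minimality of $u_\epsilon$ and the limsup inequality. I expect the main technical obstacle to be a careful handling of the gluing in the recovery sequence so that it lies in $W^{1,2}$ with the desired energy asymptotics, together with the density argument allowing reduction to smooth jump sets; compactness of the sequence $(u_\epsilon)$ itself in $L^1$ (needed to make the last statement non-vacuous) follows from the uniform BV bound on $\Phi\circ u_\epsilon$ and the fact that $\Phi$ is a homeomorphism on the relevant range.
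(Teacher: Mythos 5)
Your proposal follows exactly the same Modica--Mortola route as the paper: the Young/Cauchy--Schwarz estimate reducing the liminf bound to BV lower semicontinuity of the antiderivative $\Phi\circ v_\epsilon$ (the paper calls it $\psi$), the truncation to $[s_-,s_+]$, the $1$D optimal-profile recovery sequence for the limsup with an approximation by smooth-interface competitors, and the standard comparison argument for convergence of minimisers. The only small addition in your write-up is the explicit remark that $L^1$-precompactness of the minimising family follows from the uniform BV bound on $\Phi\circ u_\epsilon$, which the paper leaves implicit; otherwise the two proofs are the same.
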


\begin{proof}
 
 The proof has three main steps. We show the two inequalities required for $\Gamma$-convergence and then address the convergence of minimisers statement. This result is similar to one proved by Modica \cite[Theorem 1]{modica1987gradient} and therefore we will utilise some results from his paper to shorten our proof. 
 
 \vspace{3 mm}
 
 We begin by showing the lower bound for $\Gamma$-convergence. Let $\left( v_\epsilon \right)$ be a sequence of $L^1(\Omega)$ functions such that $v_\epsilon \rightarrow v \in L^1(\Omega)$ as $\epsilon \rightarrow 0$. We need to show that 
 \begin{equation}\label{4.119}
  \liminf_{\epsilon \rightarrow 0} F_\epsilon (v_\epsilon) \geqslant F(v).
 \end{equation}
 Without loss of generality we can assume that $\liminf_{\epsilon \rightarrow 0} F_\epsilon(v_\epsilon) = \lim_{\epsilon \rightarrow 0} F_\epsilon (v_\epsilon)<\infty$. As $v_\epsilon \rightarrow v$ in $L^1(\Omega)$, up to a subsequence we may also assume that $v_\epsilon \rightarrow v$ almost everywhere. Then the continuity of $\sigma$, together with Fatou's Lemma implies 
 \begin{equation}\label{4.120}
  \int_\Omega \sigma(v)\,dx \leqslant \liminf_{\epsilon \rightarrow 0} \int_\Omega \sigma(v_\epsilon)\,dx \leqslant \liminf_{\epsilon \rightarrow 0}C\epsilon = 0. 
 \end{equation}
 Therefore $v(x)=s_-$ or $s_+$ almost everywhere. As the function $\overline{v_\epsilon}=s_- \vee v_\epsilon \wedge s_+ \in H^1(\Omega)$, without loss of generality we can assume that $v_\epsilon(x)\in [s_-,s_+]$ since $F_\epsilon(\overline{v_\epsilon})\leqslant F_\epsilon(v_\epsilon)$. Using Young's inequality and H\"{o}lder's inequality we find
 \begin{equation}\label{4.121}
 \begin{split}
  F_\epsilon(v_\epsilon) & \geqslant 2\left( \int_\Omega \frac{2}{3}|\nabla v_\epsilon|^2\,dx \right)^\frac{1}{2}\left( \int_\Omega \sigma(v_\epsilon)\,dx\right)^\frac{1}{2} \\
  & \geqslant 2\sqrt{\frac{2}{3}}\int_\Omega |\nabla v_\epsilon|\sqrt{\sigma(v_\epsilon)}\,dx.
 \end{split}
 \end{equation}
 If we introduce the function $\psi(t)=\int_{s_-}^t \sqrt{\sigma(\lambda)}\, d\lambda$, then as it is continuous, $v_\epsilon \rightarrow v$ in $L^1(\Omega)$, and $s_-\leqslant v_\epsilon \leqslant s_+$, we deduce from the dominated convergence theorem that $\psi(v_\epsilon)\rightarrow \psi(v)$ in $L^1(\Omega)$. We also know that
 \begin{equation}\label{4.122}
 \infty > \liminf_{\epsilon \rightarrow 0} F_\epsilon (v_\epsilon) \geqslant 2\sqrt{\frac{2}{3}} \liminf_{\epsilon \rightarrow 0} \int_\Omega |\nabla \left( \psi(v_\epsilon) \right) |\,dx.
 \end{equation}
 These deductions allow us infer that $\psi(v) \in BV(\Omega)$ \cite[Proposition 10.1.1]{buttazzo2006variational} and 
 \begin{equation}\label{4.123}
  \liminf_{\epsilon \rightarrow 0} F_\epsilon(v_\epsilon) \geqslant  2\sqrt{\frac{2}{3}} \int_\Omega |\nabla \left( \psi(v)\right) |\, dx.
 \end{equation}
 However $v$ only takes two possible values. Therefore we can find the exact form of \eqref{4.123} and conclude 
 \begin{equation}\label{4.124}
  \liminf_{\epsilon \rightarrow 0} F_\epsilon (v_\epsilon) \geqslant \left( 2\sqrt{\frac{2}{3}}\int_{s_-}^{s_+} \sqrt{\sigma(\lambda)}\,d\lambda \right) \mathcal{H}^{d-1}(S_v).
 \end{equation}
 For the upper bound in the definition of $\Gamma$-convergence we take some $v\in L^1(\Omega)$ and we need a sequence $v_\epsilon \rightarrow v$ in $L^1(\Omega)$ such that 
 \begin{equation}\label{4.125}
  \limsup_{\epsilon \rightarrow 0} F_\epsilon(v_\epsilon)=\lim_{\epsilon \rightarrow 0} F_\epsilon (v_\epsilon)=F(v).
 \end{equation}
 Without loss of generality we can assume $v \in SBV^2(\Omega,\left\{ s_-,s_+ \right\})$. As $v$ only takes two values we can write it as $s_-+(s_+-s_-)\chi_{\left\{v=s_+\right\}}$. Following Modica's argument, initially we assume a smoothness property, then prove \eqref{4.125} in general by approximation. Suppose there exists some open set $A\subset \mathbb{R}^d$ such that $\partial A$ is non-empty, compact, smooth, $\mathcal{H}^{d-1}(\partial A \cap \partial \Omega)=0$ and $v=s_-+(s_+-s_-)\chi_A$. Then we can find a set of Lipschitz functions $v_\epsilon \rightarrow v$ in $L^1(\Omega)$ which satisfy \eqref{4.125} \cite[Proposition 2]{modica1987gradient}. Now we take a general $v \in SBV^2(\Omega,\left\{ s_-,s_+ \right\})$ and \cite[Lemma 1]{modica1987gradient} implies that we can find sets $(A_j)$ satisfying the above regularity conditions such that 
 \begin{equation}\label{4.126}
  \mathcal{H}^{d-1}( A_j ) \rightarrow \mathcal{H}^{d-1} \left( S_v \right).
 \end{equation}
 For ease we can assume that $\mathcal{H}^{d-1}(A_j)\leqslant \mathcal{H}^{d-1}(S_v)+\frac{1}{j}$ for every $j$. If we define $v^j :=s_-+(s_+-s_-)\chi_{A_j}$, then by our initial step, we can find Lipschitz functions $(v_\epsilon^j)$ such that
 \begin{equation}\label{4.127}
  \limsup_{\epsilon \rightarrow 0} F_\epsilon(v^j_\epsilon ) \leqslant C \mathcal{H}^{d-1}(A_j) \leqslant C\left(\mathcal{H}^{d-1}(S_v)+\frac{1}{j}\right).
 \end{equation}
 Using a simple diagonalisation argument we can find some sequence $\epsilon(j) \rightarrow 0$ as $j\rightarrow \infty$ such that 
 \begin{equation}\label{4.128}
  \limsup_{j \rightarrow \infty} F_{\epsilon(j)} \left(v^j_{\epsilon(j)} \right) \leqslant C \mathcal{H}^{d-1} (S_v).
 \end{equation}
 Therefore $F_\epsilon$ does $\Gamma$-converge to $F$ in the appropriate sense. So to finish the proof we just need to show the standard $\Gamma$-convergence property: convergence of minimisers. Suppose that we have a sequence of minimisers $(u_\epsilon)$ such that $u_\epsilon \rightarrow u$ in $L^1(\Omega)$. We first show that 
 \begin{equation}\label{4.129}
  F(u)\leqslant F(v)
 \end{equation}
 for every $v\in SBV^2 \left(\Omega,\left\{ s_-,s_+ \right\} \right)$ which satisfies the above smoothness property. However this is clear because if $(v_\epsilon)$ is the series of Lipschitz functions approximating $v$ in $L^1(\Omega)$ then 
 \begin{equation}\label{4.130}
  F(u)\leqslant \liminf_{\epsilon \rightarrow 0} F_\epsilon (u_\epsilon) \leqslant \liminf_{\epsilon \rightarrow 0} F_\epsilon (v_\epsilon) \leqslant \limsup_{\epsilon \rightarrow 0} F_\epsilon (v_\epsilon) \leqslant F(v).
 \end{equation}
 Now if we take an arbitrary $v\in SBV^2\left( \Omega, \left\{ s_-,s_+ \right\} \right)$, we can use a very similar approximation argument as above to deduce 
 \begin{equation}\label{4.131}
  F(u)\leqslant F(v).
 \end{equation}
 Thus $u$ is indeed a minimiser of $F$.
 
\end{proof}

In order to be able to use Theorem \ref{gamma convergence} as a suitable approximation we need to employ some knowledge about the bulk energy $\sigma$. We know that it has the form
\begin{equation}\label{4.132}
\sigma(s):=\frac{a}{2}s^2+\frac{b}{3}s^3+\frac{c}{4}s^4
\end{equation}
where $c>0$, and $a$ is temperature dependent. It is often assumed that $a=\alpha(T-T^*)$ where $\alpha>0$ and $T^*$ is the temperature at which the isotropic state becomes unstable. If we follow this convention on the form of $a$ then there is a critical temperature, $T_c>T^*$ (following the notation of Virga \cite{virga1994variational}), such that $\sigma$ has exactly two global minimisers: $0$ and $s_+$. By the addition of a constant, without loss of generality we can assume that $\sigma(0)=\sigma(s_+)=0$ at or near this critical temperature.

\vspace{3mm}

Finally, and most importantly, we need to consider the size of the bulk constant relative to the elastic constants to ensure that Theorem \ref{gamma convergence} is a physically appropriate limit to take. A typical set of values for the Frank elastic constants \cite{stewart2004static} is
\begin{equation}\label{4.133}
 K_1 = 4.5 \times 10^{-12} N,\quad K_2 = 2.9\times 10^{-12}N,\quad K_3 = 9.5\times 10^{-12}N.
\end{equation}
Of course exact values vary between liquid crystals but they are certainly of the order of $10^{-11}N$ or less. On the other hand the bulk constants $\alpha$, $b$, and $c$ are very much larger. They have been measured fewer times but a set of values was experimentally found to be \cite{wojtowicz1975introduction}
\begin{equation}\label{4.134}
 \alpha = 0.042 \times 10^6 Nm^{-2}K^{-1},\quad b=-0.64 \times 10^6 Nm^{-2},\quad c = 0.35 \times 10^6 Nm^{-2}
\end{equation}
Throughout this paper we have been tacitly using the one constant approximation for our elastic energies with an elastic constant of the order $10^{-11}N$. However, as can be seen from \eqref{4.113} we have been considering energies where we have divided through by this Frank constant. Therefore the value of the bulk constants in our functional \eqref{4.113} will be of the order $10^{15}$ or $10^{16}$. This means that it is a justified idea to use the $\Gamma$-convergence result to provide an approximation to our variational problem in order to simplify it. Theorem \ref{gamma convergence} therefore tells us that our variational problem can be approximated by the minimisation of
\begin{equation}\label{4.135}
 I_E(\vn,s)=\int_\Omega 2s^2\left( |\nabla \vn|^2 + 2t\vn\cdot\nabla\times\vn + t^2\right) \, dx + K \mathcal{H}^{d-1}\left( S_s\right)
\end{equation}
over 
\begin{equation}\label{4.136}
 \mathcal{B}:=\left\{ \,  s\in SBV^2 \left(\Omega, \left\{ 0,s_+\right\} \right), \vn \in SBV^2 \left(\Omega\setminus C_s, \mathbb{S}^2 \right)\, \left| \,  
 \vn_-=-\vn_+\,\,\mathcal{H}^{d-1}\,\,a.e.\,\,\text{on}\,\,S_\vn \,
    \right.\right\}.
\end{equation}
This is simpler but we still have the issue of two variables which are not independent from each other. So now if we consider the variable $\vm:=s\vn$ then because we know that 
\begin{equation}\label{4.137}
 s = s_+\chi_{\left\{ s=s_+ \right\}} \in SBV^2(\Omega), \,\, \text{and}\,\, \vn \chi_{\left\{ s=s_+ \right\}} \in SBV^2 \left( \left\{ s=s_+ \right\}, \mathbb{S}^2 \right)
\end{equation}
and $SBV\cap L^\infty$ is an algebra of functions, we find 
\begin{equation}\label{4.138}
 \vm \in SBV^2 \left(\Omega, s_+\mathbb{S}^2 \cup \left\{ 0 \right\} \right).
\end{equation}
Therefore if we now consider the free-energy \eqref{4.135} to be just a function of $\vm$ we arrive at the form
\begin{equation}\label{4.139}
 I(\vm):= \int_{\left\{\vm\neq 0\right\}} |\nabla \vm|^2+ 2t\vm\cdot\nabla\times \vm+ t^2|\vm|^2\,dx+K \mathcal{H}^{d-1}\left( \partial \left\{ \vm = 0 \right\} \right).
\end{equation}
We now find ourselves with a free-energy which is strongly related to the Oseen-Frank free energy. The set $\left\{ \vm \neq 0 \right\}$ corresponds to the liquid crystal phase of the sample, whereas $\left\{ \vm = 0 \right\}$ represents wherever the molecules are isotropic. Even though this appears to be an Oseen-Frank energy, because it was derived from the uniaxial Landau-de Gennes model it has some immediate advantages. Firstly, all defects have finite energy with this model. 

\vspace{3 mm}

\begin{figure}[ht]
\centering
\subfloat{\label{fig:Defect 1}\includegraphics[scale=0.2]{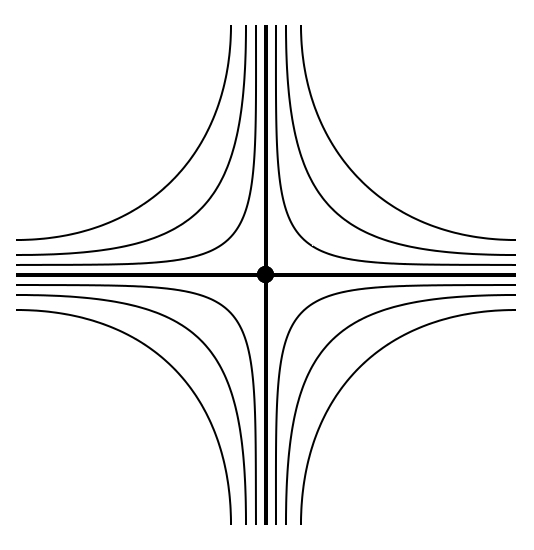}} \quad
\subfloat{\label{fig:Defect 2}\includegraphics[scale=0.132]{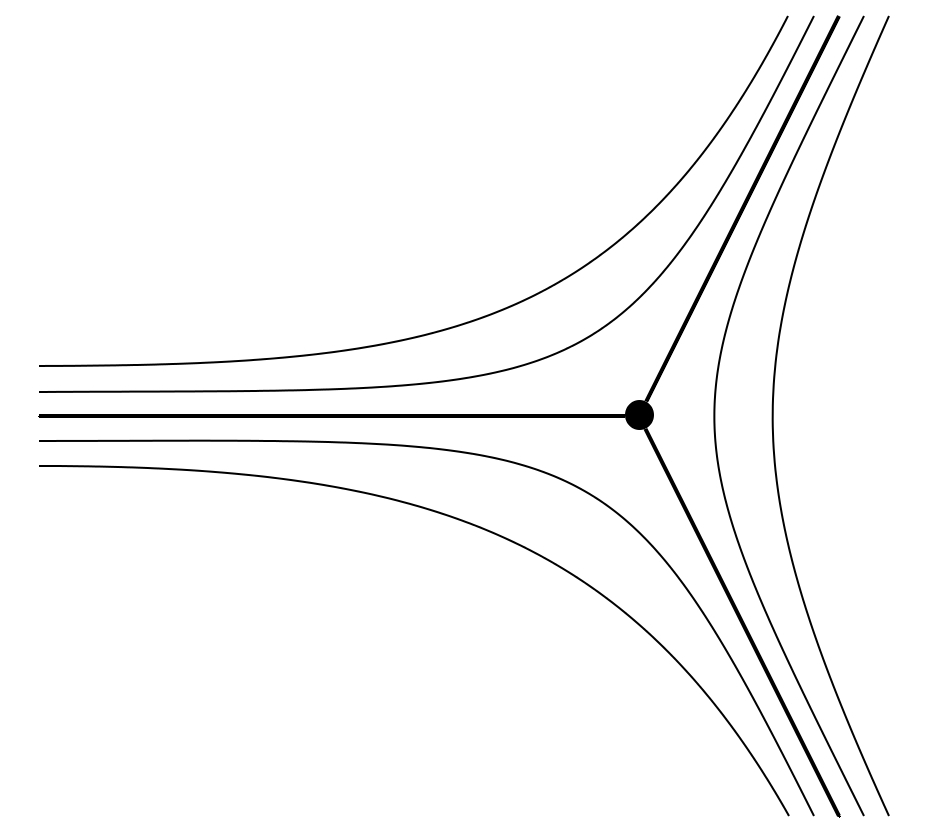}}
\caption{Two defect profiles}
\label{fig:defects}
\end{figure}

The defects shown in Figure \ref{fig:defects} can be given a finite energy in this model by inserting a small core of isotropic region in the center. Secondly, weak anchoring is automatically included in the model because with variational problems set in $BV$, it cannot be guaranteed that a minimiser satisfies imposed boundary conditions. Unfortunately, the variational problem as given by \eqref{4.138} and \eqref{4.139} is not well-posed in the sense that, without a control on the size jump set of the unit vector part $\vn$, a global minimiser may not exist. This brings us to our conclusion, we propose minimising
\begin{equation}\label{4.140}
\begin{split}
I(\vm) :=&K_E \int_{\left\{\vm\neq 0\right\}} |\nabla \vm|^2+ 2t\vm\cdot\nabla\times \vm+ t^2|\vm|^2\,dx+K \mathcal{H}^{d-1}\left( S_\vm \right) \\ 
=&K_E \int_\Omega |\nabla \vm|^2+ 2t\vm\cdot\nabla\times \vm+ t^2|\vm|^2\,dx+K \mathcal{H}^{d-1}\left( S_\vm \right)
\end{split}
\end{equation}
over $\mathcal{B}$ as given by \eqref{4.138}. After rescaling $\vm$, and the functional itself, this is equivalent to minimising 
\begin{equation}\label{4.141}
I(\vn):= \int_\Omega |\nabla \vn|^2+ 2t\vn\cdot\nabla\times \vn+ t^2|\vn|^2\,dx+K \mathcal{H}^{d-1}\left( S_\vn \right).
\end{equation}
over
\begin{equation}\label{4.142}
 \vn \in SBV^2 \left(\Omega, \mathbb{S}^2 \cup \left\{ 0 \right\} \right).
\end{equation}

%
%
%
% SECOND SECTION
% 
% Bounded Variation Variational Problems 
% 
% 
% 

\section{Bounded Variation Variational Problems}\label{section: SBV}

%
%
%
% Setting up the problem 
%
%
%

\subsection{The Problem}

There are a number of small technical issues which need to be considered when setting up variational problems with functions of bounded variation. For the remainder of the paper, unless stated otherwise, our domain $\Omega \subset \mathbb{R}^d$ will be a bounded Lipschitz domain. We will of course want to apply boundary conditions to our problem, but this creates an issue. Using the direct method of the calculus of variations, we will be able to extract a subsequence from the minimising sequence $\left( \vn_j \right)$ which converges weakly$^*$ in SBV. However traces are not preserved under weak$^*$ convergence. Therefore we will actually consider our variational problem over an enlarged domain $\Omega_\epsilon$ in the following sense. We look to minimise the integral functional
\begin{equation}\label{5.7}
 \mathscr{F}({\bf n})=\int_{\Omega_\epsilon} w({\bf n},\nabla{\bf n})\,dx+K\mathcal{H}^2(S_{{\bf n}}),
\end{equation}
over the set of admissible functions 
\begin{equation}\label{5.8}
 \mathcal{A}:=\left\{\,\left. \vn\in SBV^2\left(\Omega_{\epsilon},\mathbb{S}^2 \cup \left\{ 0\right\} \right)\,\right|\,\vn={\bf g}\,\,\text{on}\,\,\Omega_{\epsilon}\backslash\overline{\Omega}\,\right\},
\end{equation}
where $\Omega_{\epsilon}$ is an open set containing $\Omega$ and ${\bf g} \in W^{1,2}\left( \Omega_\epsilon \setminus \Omega, \mathbb{S}^2 \right)$. As we shall see, this extended domain idea allows us to prove the existence of a minimiser for our problem while at the same time incorporating weak boundary conditions since $S_\vn$ is allowed to intersect $\partial \Omega$. After this section, when there is no room for confusion we will drop the subscript on the extended domain $\Omega_\epsilon$. However we maintain the understanding that we are studying an extended domain and the jump set can lie on the portion of the boundary $\partial \Omega$ where boundary conditions are applied. In Section \ref{section: cholesteric problem} we will also study a problem involving periodic boundary conditions. For completeness we note that these boundary conditions can be incorporated into the problem using a similar logic to the Dirichlet conditions above, and lead to the same existence result and Euler-Lagrange equations. In terms of liquid crystal theory our Lagrangian will be the familiar one-constant Oseen-Frank free energy
\begin{equation}\label{5.9}
 w({\bf n},\nabla{\bf n})= \left[ |\nabla \vn|^2 + 2t\vn\cdot \nabla \times \vn + t^2|\vn|^2 \right].
\end{equation}
The only critical property of the free energy functional that we require for the existence proof is that
\begin{equation}\label{5.10}
 w({\bf n},\nabla{\bf n})\geqslant \phi(|\nabla {\bf n}|),
\end{equation}
where $\phi(z)$ is a convex function in $z$ such that 
\begin{equation}\label{5.11}
 \lim_{t\rightarrow\infty}\frac{\phi(z)}{z}=\infty.
\end{equation}

%
%
%
% Existence of minimisers
%
%
%

\subsection{Existence of a Minimiser}

\begin{theorem}\label{4: existence of minimiser}
Consider the minimisation problem outlined in \eqref{5.7} and \eqref{5.8}. Then there exists an ${\bf n}\in \mathcal{A}$ such that 
\begin{equation}\label{5.12}
 \mathscr{F}({\bf n})=\inf_{\vm\in \mathcal{A}} \mathscr{F}(\vm).
\end{equation}
\end{theorem}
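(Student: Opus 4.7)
The plan is the direct method in SBV, carried out in four steps.

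First I would verify that $\mathcal{A}$ is non-empty with finite infimum. Taking the competitor that equals ${\bf g}$ on $\Omega_\epsilon\setminus\overline{\Omega}$ and $0$ on $\Omega$ produces an admissible field with $\mathcal{H}^{d-1}(S_\vn)\leqslant \mathcal{H}^{d-1}(\partial\Omega)<\infty$ (since $\Omega$ is Lipschitz) and bulk energy controlled by $\|\nabla{\bf g}\|_{L^2}^2$. Set $m:=\inf_{\mathcal{A}}\mathscr{F}<\infty$ and fix a minimising sequence $(\vn_j)\subset\mathcal{A}$.

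Second, compactness. The codomain constraint automatically gives $\|\vn_j\|_\infty\leqslant 1$. Young's inequality applied to the cross term in $w$, together with $|\vn_j|\leqslant 1$, yields $w(\vn_j,\nabla\vn_j)\geqslant \tfrac{1}{2}|\nabla\vn_j|^2 - Ct^2$, so $\int_{\Omega_\epsilon}|\nabla\vn_j|^2\,dx$ is bounded uniformly in $j$. This supplies the coercivity \eqref{5.2} with $\phi(z)=z^2/2$; taking $\theta\equiv 1$ (which is concave, non-decreasing, and satisfies $\theta(t)/t\to\infty$ as $t\to 0$), the uniform bound on the surface part of $\mathscr{F}$ gives the remaining half of \eqref{5.2}. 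Applying the vectorial versions of Theorems \ref{4: SBV closure} and \ref{4: SBV compactness} (as noted in the discussion following their statements), I extract a subsequence $\vn_{j_k}\tostar\vn$ in BV with $\vn\in SBV^2(\Omega_\epsilon,\mathbb{R}^3)$.

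Third, admissibility of the limit. Passing to a further subsequence gives $\vn_{j_k}\to\vn$ pointwise a.e., so $|\vn(x)|\in\{0,1\}$ a.e.\ and the Dirichlet constraint $\vn={\bf g}$ on $\Omega_\epsilon\setminus\overline{\Omega}$ is preserved. Hence $\vn\in\mathcal{A}$.

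Finally, lower semicontinuity of $\mathscr{F}$. The surface term is handled by the second inequality in \eqref{5.3} with $\theta\equiv 1$, giving $\mathcal{H}^{d-1}(S_\vn)\leqslant\liminf_k\mathcal{H}^{d-1}(S_{\vn_{j_k}})$. The quadratic Dirichlet part $\int|\nabla\vn|^2\,dx$ is LSC by the first inequality in \eqref{5.3} with $\phi(z)=z^2$, and the zeroth-order term $t^2\int|\vn|^2\,dx$ passes by strong $L^2$-convergence (dominated convergence plus the $L^\infty$-bound). The step I expect to be the main obstacle is the cross term $2t\int\vn\cdot\nabla\times\vn\,dx$, which is only linear in the approximate gradient and therefore not directly LSC. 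I would dispatch it by splitting
\[
\int\vn_{j_k}\cdot\nabla\times\vn_{j_k}\,dx = \int(\vn_{j_k}-\vn)\cdot\nabla\times\vn_{j_k}\,dx + \int\vn\cdot\nabla\times\vn_{j_k}\,dx,
\]
controlling the first summand via Cauchy--Schwarz (strong $L^2$-convergence $\vn_{j_k}\to\vn$ against the uniform $L^2$-bound on approximate gradients), and handling the second via weak $L^2$-convergence $\nabla\vn_{j_k}\rightharpoonup\nabla\vn$. The latter is a consequence of the uniform $L^2$-bound on the approximate gradients together with the standard SBV identification of the weak limit with the absolutely continuous part of $D\vn$. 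Combining all four contributions gives $\mathscr{F}(\vn)\leqslant\liminf_k\mathscr{F}(\vn_{j_k})=m$, so $\vn$ is the required minimiser.
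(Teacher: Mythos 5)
Your proposal is correct and follows the same direct-method skeleton as the paper: a coercivity bound of the type \eqref{5.14}, SBV compactness (Theorem \ref{4: SBV compactness}), passage to a weak$^*$ limit, admissibility of the limit via pointwise a.e.\ convergence, and lower semicontinuity. The genuine difference lies in how lower semicontinuity of the bulk energy is obtained. The paper treats the integrand $w(\vn,\nabla\vn)$ as a whole, invoking \cite[Thm 5.8]{ambrosio2000functions} (applicable since $w$ is nonnegative and convex in $\nabla\vn$) together with \cite[Remark 13.4.2]{buttazzo2006variational} for the surface term; you instead argue term by term, using Theorem \ref{4: SBV closure} with $\phi(z)=z^2$ and $\theta\equiv 1$ (a legitimate choice, as in the Mumford--Shah setting) for the Dirichlet and jump parts, strong $L^2$ convergence for the zeroth-order part, and a strong-times-weak splitting for the cross term $2t\int\vn\cdot\nabla\times\vn\,dx$. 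This is more self-contained and correctly isolates the only delicate point, namely that the cross term is merely linear in the approximate gradient. The one step you should support with a precise citation is the weak $L^2$ convergence $\nabla\vn_{j_k}\rightharpoonup\nabla\vn$ of the approximate gradients: it is not contained in the closure statement as quoted in Theorem \ref{4: SBV closure}, but it is part of the full Ambrosio closure theorem (\cite[Theorems 4.7 and 4.8]{ambrosio2000functions}, and its vectorial version), which under \eqref{5.2} guarantees that the absolutely continuous parts of $D\vn_{j_k}$ converge weakly to $\nabla\vn$ and in particular cannot be polluted by diffusing jump parts; merely extracting a weak $L^2$ limit from the uniform bound would not by itself identify that limit. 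With that reference supplied your argument is complete; you are in fact somewhat more careful than the paper in verifying that $\mathcal{A}\neq\emptyset$ with finite infimum and in deriving the coercivity from Young's inequality rather than from the standing assumption \eqref{5.10}.
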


\begin{proof}

This result is an adaptation of \cite[Thm 5.24]{ambrosio2000functions} to our specific setup. We have that $\inf_{\vm\in \mathcal{A}} \mathscr{F}(\vm)>-\infty$, so by taking a minimising sequence ${\bf n}^j$, the structure of the functional itself implies
\begin{equation}\label{5.14}
 \int_\Omega \phi\left( |\nabla \vn^j| \right) \,dx + K\mathcal{H}^{d-1}\left( S_{\vn^j} \right)\leqslant M
\end{equation}
for every $j$. This means we can apply Theorem \ref{4: SBV compactness} to deduce that there is a subsequence (which we relabel) such that 
\begin{equation}\label{5.15}
 {\bf n}^j \tostar {\bf n} \quad \text{in}\,\,BV,\,\,\text{where}\,\,\vn\in SBV(\Omega,\mathbb{R}^3).
\end{equation}
Furthermore, since $\mathscr{F}(\vn) < \infty$ we infer that $\vn \in SBV^2(\Omega,\mathbb{R}^3)$. Now we can simply apply two results from the literature. The first is that under the assumption \eqref{5.14}, we have that \cite[Remark 13.4.2]{buttazzo2006variational}
\begin{equation}\label{5.16}
 \liminf_{j\rightarrow \infty} \mathcal{H}^2(S_{{\bf n}^j})\geqslant \mathcal{H}^2(S_{\bf n}).
\end{equation}
The second result \cite[Thm 5.8]{ambrosio2000functions} gives us the lower semicontinuity of the Oseen-Frank energy so that we have
\begin{equation}\label{5.17}
\liminf_{j\rightarrow \infty}\int_{\Omega}w(\vn^j,\nabla\vn^j)\,dx\geqslant \int_{\Omega}w(\vn,\nabla\vn)\,dx.
\end{equation}
By combining \eqref{5.16} and \eqref{5.17} we see that we have proved the lower semicontinuity 
\begin{equation}\label{5.18}
 \liminf_{j\rightarrow\infty} \mathscr{F}(\vn^j)\geqslant \mathscr{F}(\vn).
\end{equation}
So to conclude existence we just need that $\vn \in \mathcal{A}$. Firstly we require that $\vn\in\mathbb{S}^2\cup \left\{ 0 \right\}$. This is true because weak$^*$ convergence in $BV$ implies convergence in $L^1$ so there is a subsequence converging almost everywhere. The set $\mathbb{S}^2 \cup \left\{ 0 \right\}$ is closed, hence $\vn(x) \in \mathbb{S}^2 \cup \left\{ 0 \right\}$. We also need to prove that
\begin{equation}\label{5.20}
 \vn={\bf g} \quad \text{on}\quad \Omega_{\epsilon}\backslash\overline{\Omega}.
\end{equation}
The same reasoning applies here. The $L^1$ convergence of $\vn^j$ implies pointwise convergence for a subsequence, this readily implies \eqref{5.20}. Hence $\vn \in \mathcal{A}$ and $I(\vn) = \inf_{\vm \in\mathcal{A}} I(\vm)$.

\end{proof}

\vspace{3 mm}

This establishes the existence of a minimiser of this problem for all values of the cholesteric twist $t$. It is clear that this proof easily applies to the standard liquid crystal problem where we have a cuboid domain and boundary conditions on the top and bottom faces. In other words when
\begin{equation}\label{5.21}
 \Omega_{\epsilon}= (-L_1,L_1) \times (-L_2,L_2) \times (-\epsilon,1+\epsilon)
\end{equation}
and
\begin{equation}\label{5.22}
 \mathcal{A}= \left.\left\{ \,\vn\in SBV^2(\Omega_{\epsilon},\mathbb{S}^2\cup \left\{ 0 \right\} )\,\right|\,\vn={\bf e}_1\,\,\text{if}\,\,z\in(-\epsilon,0),\,\,\vn={\bf e}_3\,\,\text{if}\,\,z\in(1,1+\epsilon)\,\right\}.
\end{equation}

%
%
%
% Various forms of the Euler-Lagrange equation
%
%
%

\subsection{Euler-Lagrange Equation}

The logical progression, once we know that minimisers exist, is to find the Euler-Lagrange equation for this problem so that we can begin to find candidate minimisers. This is a relatively simple affair when studying problems in Sobolev spaces; we use an outer variation to derive an integral equation which is the weak form of some set of partial differential equations. However the discontinuity set $S_{\vn}$ creates additional issues. For example if we were to perform an outer variation with some smooth function $\vphi$ the discontinuity set would remain unchanged. To overcome this problem we will be using an inner variation to derive the Euler-Lagrange equation. Then we will show that if the discontinuity set $\overline{S}_{\vn}$ has sufficient regularity we can write the Euler-Lagrange equation in other forms.

\vspace{3mm}

The following progression of results mirrors those found in \cite[Section 7.4]{ambrosio2000functions} but we are investigating a different Lagrangian. Therefore we will utilise some results from their work and adapt their arguments to our situation. The functional we will be using throughout this section is given by
\begin{equation}\label{5.23}
\mathscr{F}(\vn) = \int_\Omega |\nabla \vn|^2+2t\vn\cdot\nabla\times \vn + t^2|\vn|^2\, dx + K\mathcal{H}^{d-1} \left( S_\vn \right),
\end{equation}
with the set of admissible functions as given in \eqref{5.22}.
 
\begin{theorem}\label{4: Euler Lagrange 1}
 Let $\vn \in \mathcal{A}$ be a minimiser of \eqref{5.23}. Then
 \begin{equation}\label{5.24}
 \begin{split}
  0= &\int_\Omega \left[ |\nabla \vn|^2 + 2t\vn\nabla\times\vn +t^2|\vn|^2 \right] \text{div}\, \vphi - 2\nabla\vn : \left( \nabla \vn \nabla \vphi \right) - 2t\nabla \vphi : {\bf A}\,dx \\
  &+K\int_{S_\vn} \text{div}^{S_\vn}\, \vphi\, d\mathcal{H}^{d-1} 
 \end{split}
 \end{equation}
 for any $\vphi \in C^1_0 \left( \Omega, \mathbb{R}^d\right)$, where ${\bf A}_{i,j} = \left( \frac{ \partial \vn}{\partial x_i} \times \vn \right)_j$.
 
\end{theorem}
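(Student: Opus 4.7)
The strategy is to perform an inner variation in the spirit of \cite[Section 7.4]{ambrosio2000functions}, which has the advantage over outer variations of deforming the jump set $S_\vn$ along with the competing function, so that the surface term in $\mathscr{F}$ contributes nontrivially to the first variation. Fix $\vphi \in C^1_0(\Omega,\mathbb{R}^d)$ and consider the one-parameter family of maps $\Psi_s(x) := x + s\vphi(x)$. Because $\vphi$ has compact support in $\Omega$, for $|s|$ sufficiently small $\Psi_s$ is a $C^1$-diffeomorphism of $\Omega$ onto itself which equals the identity near $\partial \Omega$. Define the competitor
\begin{equation}
\vn_s(y) := \vn\left(\Psi_s^{-1}(y)\right).
\end{equation}
Then $|\vn_s(y)| \in \{0,1\}$ pointwise, the Dirichlet data on $\Omega_\epsilon \setminus \overline{\Omega}$ is preserved, and the chain rule for $SBV$ under composition with a diffeomorphism ensures $\vn_s \in SBV^2(\Omega, \mathbb{S}^2 \cup \{0\})$ with $S_{\vn_s} = \Psi_s(S_\vn)$. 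Hence $\vn_s \in \mathcal{A}$, so minimality yields $\frac{d}{ds}\mathscr{F}(\vn_s)\big|_{s=0} = 0$.

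For the bulk part of $\mathscr{F}$, change variables $y = \Psi_s(x)$. Using the expansions
\begin{equation}
\det(\nabla \Psi_s)(x) = 1 + s\,\mathrm{div}\,\vphi(x) + O(s^2), \qquad (\nabla \Psi_s)^{-1}(x) = I - s\nabla \vphi(x) + O(s^2),
\end{equation}
and the relation $\nabla_y \vn_s(y) = \nabla \vn(x)\,(\nabla \Psi_s(x))^{-1}$, a direct computation gives
\begin{equation}
|\nabla_y \vn_s|^2 = |\nabla \vn|^2 - 2s\,\nabla \vn : (\nabla \vn\, \nabla \vphi) + O(s^2),
\end{equation}
while an analogous expansion applied to $\epsilon_{ijk}\vn_{s,i}\partial_{y_j}\vn_{s,k}$ produces the $-2ts\,\nabla \vphi : \mathbf{A}$ contribution, using precisely the identity $(\partial_{x_l}\vn \times \vn)_j = \mathbf{A}_{l,j}$. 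The term $t^2|\vn_s|^2 = t^2|\vn|^2$ is unchanged. Differentiating at $s=0$ and combining with the Jacobian factor yields the bulk part of \eqref{5.24}.

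For the surface term, since $\vn \in SBV^2$ the jump set $S_\vn$ is countably $\mathcal{H}^{d-1}$-rectifiable, so the classical first variation of area formula for rectifiable sets under a $C^1$ deformation (see e.g.\ \cite[Section 7.4]{ambrosio2000functions}) applies and gives
\begin{equation}
\frac{d}{ds}\bigg|_{s=0} \mathcal{H}^{d-1}\left(\Psi_s(S_\vn)\right) = \int_{S_\vn} \mathrm{div}^{S_\vn}\vphi\, d\mathcal{H}^{d-1},
\end{equation}
where $\mathrm{div}^{S_\vn}\vphi = \mathrm{div}\,\vphi - \nu \cdot (\nabla \vphi\,\nu)$ with $\nu$ the approximate normal to $S_\vn$. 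Summing the two contributions and setting the result to zero gives \eqref{5.24}.

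The main technical obstacle is the justification of differentiation under the integral sign and the first variation of area step. The former is handled by a dominated convergence argument using that $|\nabla \vn|^2 \in L^1$ dominates the difference quotients uniformly for $|s|$ small, given the $C^1$ bound on $\vphi$; the latter relies on the rectifiability of $S_\vn$ inherited from $\vn \in SBV^2$, and is the reason inner variations are the natural tool in the $SBV$ setting.
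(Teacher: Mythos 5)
Your proposal is correct and follows essentially the same route as the paper: an inner variation $\Psi_s(x)=x+s\vphi(x)$, the competitor $\vn_s=\vn\circ\Psi_s^{-1}$, a change of variables with the expansions of $(\nabla\Psi_s)^{-1}$ and $\det\nabla\Psi_s$ to first order, and the first variation of area formula from \cite[Section 7.4]{ambrosio2000functions} for the jump term. The only difference is cosmetic: you spell out the admissibility of $\vn_s$ and the justification of differentiating under the integral, which the paper leaves implicit.
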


\begin{proof}
 
As mentioned above we will be using an inner variation to derive \eqref{5.24}. Take a $\vphi  \in C^1_0 \left( \Omega, \mathbb{R}^d\right)$. Then for $\epsilon > 0$ sufficiently small, the map $\vpsi_\epsilon (x) := x + \epsilon \vphi(x)$ is a diffeomorphism of $\Omega$ onto itself. Therefore if we let $y = \vpsi_\epsilon (x)$ and define $\vn_\epsilon (y) := \vn\left( \vpsi_\epsilon^{-1} (y) \right)$, we know that 
\begin{equation}\label{5.25}
 \mathscr{F}(\vn_\epsilon) \geqslant \mathscr{F}(\vn).
\end{equation}
In other words 
\begin{equation}\label{5.26}
 \left.\frac{d}{d\epsilon} \mathscr{F}(\vn_\epsilon) \right|_{\epsilon=0}=0. 
\end{equation}
In order to use \eqref{5.26} we will find an expression for $\mathscr{F}(\vn_\epsilon) - \mathscr{F}(\vn)$. By performing a change of variables we find
\begin{equation}\label{5.27}
\begin{split}
 &\int_\Omega |\nabla \vn_\epsilon (y)|^2+2t\vn_\epsilon(y)  \cdot \nabla \times \vn_\epsilon (y)+t^2|\vn_\epsilon(y)|^2\,dy-\int_\Omega |\nabla \vn|^2+2t\vn\cdot \nabla \times \vn+t^2|\vn|^2\, dx \\
 =& \int_\Omega \left[ \left| \nabla \vn (x) \nabla \vpsi_\epsilon^{-1} \left( \psi_\epsilon(x) \right) \right|^2 + 2 t \vn_i(x)\epsilon_{ijk} \left( \nabla \vn \nabla \vpsi_\epsilon^{-1} \left( \vpsi_\epsilon (x) \right)\right)_{kj} + t^2 |\vn(x)|^2 \right] \left| \text{det} \nabla \vpsi_\epsilon (x) \right|\,dx \\
 &-\int_\Omega|\nabla\vn|^2-2t\vn\cdot\nabla \times \vn +t^2|\vn|^2\,dx.
\end{split}
\end{equation}
But we only need to consider terms up to order $\epsilon$ and we can easily calculate that 
\begin{equation}\label{5.28}
\begin{split}
 \nabla \vpsi_\epsilon^{-1}\left( \vpsi_\epsilon(x) \right) = \left( I+\epsilon\nabla \vphi(x)\right)^{-1} = I-\epsilon\nabla \vphi(x)+O(\epsilon^2)\\
 \text{det} \nabla \vpsi_\epsilon (x) = \text{det}\left( I + \epsilon \nabla \vphi(x) \right) = 1+\epsilon \text{div}\, \vphi(x)+O(\epsilon^2).
\end{split}
\end{equation}
Applying these approximations to \eqref{5.27} allows us to deduce that
\begin{equation}\label{5.29}
 \begin{split}
  &\mathscr{F}(\vn_\epsilon)-\mathscr{F}(\vn) \\
  =& \int_\Omega \left[ |\nabla\vn(I-\epsilon \nabla \vphi)|^2+2t\vn_i \epsilon_{ijk} \left( \nabla \vn (1-\epsilon \nabla \vphi) \right) +t^2 |\vn|^2 \right] (1+\epsilon \text{div}\, \vphi)\,dx \\
  &- \int_\Omega|\nabla\vn|^2-2t\vn\cdot\nabla \times \vn+t^2|\vn|^2\,dx+ O(\epsilon^2) + K\left(\mathcal{H}^{d-1} (S_{\vn_\epsilon})-\mathcal{H}^{d-1} (S_{\vn}) \right) \\
  =& \epsilon \int_\Omega \left[ |\nabla \vn|^2+2t\vn\cdot\nabla \times \vn+t^2|\vn|^2\right] \text{div}\, \vphi - 2\nabla \vn:\left(\nabla \vn \nabla \vphi\right) -2t \nabla \vphi:{\bf A}\, dx+O(\epsilon^2) \\
  &+K\left(\mathcal{H}^{d-1} (S_{\vn_\epsilon})-\mathcal{H}^{d-1} (S_{\vn}) \right),
 \end{split}
\end{equation}
where ${\bf A}$ is given in the statement. To finish the proof we just need to apply \cite[Thm 7.31]{ambrosio2000functions} concerning the first variation of the area to deduce that 
\begin{equation}\label{5.30}
 K\left(\mathcal{H}^{d-1} (S_{\vn_\epsilon})-\mathcal{H}^{d-1} (S_{\vn}) \right) = \epsilon K \int_{S_\vn} \text{div}^{S_\vn}\, \vphi\, d\mathcal{H}^{d-1}+ O(\epsilon^2).
\end{equation}
Then dividing \eqref{5.29} by $\epsilon$ and taking the limit as $\epsilon\rightarrow 0$ gives us the statement.

\end{proof}

We cannot easily formulate \eqref{5.24} as the weak form of some strong equation due to the unknown regularity of the discontinuity set $S_\vn$. Therefore in the following results we will assume some a-priori regularity of the discontinuity set, or the function itself, and find equivalent forms of the Euler-Lagrange equation given the regularity condition.

\begin{proposition}\label{4: Euler Lagrange 2}
 Suppose $\vn \in \mathcal{A}$ is a minimiser of \eqref{5.23}. Let $A\subset\subset \Omega$ and suppose that 
 \begin{equation}\label{5.31}
  \overline{S}_\vn\cap A = \left\{\, \left. (z,\vphi(z))\,\right|\, z\in D\,\right\},
 \end{equation}
 where $D\subset \mathbb{R}^{d-1}$ is an open set and $\vphi\in C^{1} \left( D,\mathbb{R} \right)$. Then 
 \begin{equation}\label{5.32}
  \int_{A\setminus \overline{S}_\vn} \nabla \vv :\nabla \vn +t\left(\vv \cdot\nabla \times \vn+\vn\cdot\nabla \times \vv \right) - (\vn\cdot \vv)\left( |\nabla \vn|^2+2t\vn\cdot \nabla \times \vn\right) \,dx =0,
 \end{equation}
 for all $\vv \in C^\infty_0 \left( A , \mathbb{R}^k \right)$.
\end{proposition}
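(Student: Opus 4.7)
The approach is an outer variation designed to preserve both the pointwise constraint $\vn(x) \in \mathbb{S}^2 \cup \{0\}$ and the jump set, so that the surface-energy term contributes nothing to the first variation. Given $\vv \in C^\infty_0(A, \mathbb{R}^3)$, I would define
\[
 \vn_\epsilon(x) := \begin{cases} \dfrac{\vn(x) + \epsilon\vv(x)}{|\vn(x) + \epsilon\vv(x)|} & \text{if } \vn(x) \neq 0, \\[2pt] 0 & \text{if } \vn(x) = 0, \end{cases}
\]
for $|\epsilon|$ small enough that $|\vn + \epsilon\vv| \geqslant \tfrac{1}{2}$ on $\{\vn \neq 0\}$ (using $|\vn|=1$ there and the boundedness of $\vv$). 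Since $\vv$ has compact support in $A \subset\subset \Omega$, the boundary datum ${\bf g}$ is retained; because the smooth perturbation and normalization introduce no new discontinuities, $S_{\vn_\epsilon} = S_\vn$; and $\vn_\epsilon$ takes values in $\mathbb{S}^2 \cup \{0\}$ by construction. Hence $\vn_\epsilon \in \mathcal{A}$.

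By minimality, $\left.\tfrac{d}{d\epsilon}\mathscr{F}(\vn_\epsilon)\right|_{\epsilon=0}=0$. Since $|\vn_\epsilon|^2 = |\vn|^2$ the term $t^2|\vn_\epsilon|^2$ is independent of $\epsilon$, and $S_{\vn_\epsilon} = S_\vn$ gives no surface-energy contribution. On the set $\{\vn \neq 0\}$ Taylor expansion yields $\vn_\epsilon = \vn + \epsilon(\vv - (\vn\cdot\vv)\vn) + O(\epsilon^2)$. Using the differentiated constraint $\nabla\vn^T\vn = \tfrac12 \nabla |\vn|^2 = 0$, the curl identity $\nabla\times(f\vn) = \nabla f \times \vn + f\, \nabla \times \vn$, and the fact that $\vn \cdot (\vw \times \vn) = 0$, I would expand to first order and collect linear terms. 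The normal component $(\vn\cdot\vv)\vn$ of the variation produces precisely the Lagrange-multiplier term $-(\vn\cdot\vv)(|\nabla\vn|^2 + 2t\vn\cdot\nabla\times\vn)$ in \eqref{5.32}, while the remaining terms $\nabla\vn : \nabla\vv + t(\vv\cdot\nabla\times\vn + \vn\cdot\nabla\times\vv)$ arise from differentiating $|\nabla\vn_\epsilon|^2$ and $\vn_\epsilon\cdot\nabla\times\vn_\epsilon$ respectively.

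The main obstacle is the potential zero set of $\vn$ intersected with the support of $\vv$, where the normalized variation is ill-defined. I would resolve this as follows: on each connected component of $A \setminus \overline{S}_\vn$ the function $\vn$ lies in $W^{1,2}_{\text{loc}}$ and takes values in the disconnected set $\mathbb{S}^2 \cup \{0\}$, so Theorem \ref{absolutely continuous} applied to $|\vn|$ (a Sobolev function valued in $\{0,1\}$) forces $|\vn|$ to be constant on each such component. On any component where $\vn \equiv 0$ we have $\nabla\vn = 0$ a.e., so substituting $\vn = 0$ and $\nabla\vn = 0$ into the integrand of \eqref{5.32} makes it vanish identically; the choice $\vn_\epsilon \equiv 0$ on such components is thus admissible and contributes nothing to the variation. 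Equation \eqref{5.32} is therefore established by the variational calculation on the components where $|\vn| = 1$. The $C^1$-graph hypothesis on $\overline{S}_\vn \cap A$ is not strictly required for this weak form of the Euler--Lagrange equation; it should become essential only for subsequent stronger formulations of the equation, including the jump conditions across $\overline{S}_\vn$.
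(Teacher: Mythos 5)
Your proof is correct and takes a genuinely different route from the paper's. Both proofs use the same basic outer variation — your piecewise formula $\vn_\epsilon = (\vn+\epsilon\vv)/|\vn+\epsilon\vv|$ on $\{\vn\neq 0\}$, $\equiv 0$ on $\{\vn=0\}$, is exactly the paper's $P(\vn+\epsilon\vv)$ with the projection $P$ of \eqref{5.34.1} — but you apply it over the whole of $A$ at once, whereas the paper uses the $C^1$-graph hypothesis to split $A=A^+\cup A^-\cup(A\cap\overline{S}_\vn)$ and varies $A^+$ and $A^-$ separately, with the interface handled via the natural boundary condition in \eqref{5.36}. Your observation that $|\vn|$ is a Sobolev function valued in $\{0,1\}$ on $A\setminus\overline{S}_\vn$ and hence locally constant is not in the paper but is the right way to certify that the zero set is an open union of components on which the integrand of \eqref{5.32} vanishes identically, so your global variation is well-behaved. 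One place to be a bit more careful than your sketch: you should argue both inclusions $S_{\vn_\epsilon}\subset S_\vn$ (Lipschitz composition introduces no new jumps) and $S_{\vn_\epsilon}\supset S_\vn$ for small $\epsilon$ (the projection is a local diffeomorphism near $\mathbb{S}^2$ and maps a neighbourhood of $0$ to $0$, so one-sided traces stay distinct), to conclude the surface term is exactly constant in $\epsilon$; the paper's one-sided variation does not automatically have this symmetry. Your closing observation is essentially right: the $C^1$-graph hypothesis is only really doing work in the paper to enable the one-sided decomposition and the resulting natural boundary condition, and it becomes genuinely essential in the subsequent Theorem \ref{4: Euler Lagrange 3}; your two-sided variation circumvents the decomposition for the weak form \eqref{5.32} and in fact is cleaner, since varying only $A^+$ could in principle create new jumps on $(\overline{S}_\vn\cap A)\setminus S_\vn$ if that set has positive $\mathcal{H}^{d-1}$-measure.
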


\begin{proof}
 
 The local regularity of the discontinuity gives us the decomposition that 
 \begin{equation}\label{5.33}
  A = A^+ \cup A^- \cup \left( A \cap \overline{S}_\vn \right)
 \end{equation}
 where $A^+ = \left\{ \, (z,t)\in A\,|\, t>\vphi(z)\, \right\}$ and $A^- = \left\{ \, (z,t)\in A\,|\, t<\vphi(z)\, \right\}$ are open sets. We take some $\vv \in C^\infty_0 \left( A , \mathbb{R}^k \right)$ and we will consider the outer variation 
 \begin{equation}\label{5.34}
  \vn_\epsilon(x) :=\left\{ 
  \begin{array}{lcc}
   P(\vn+\epsilon \vv) & & x\in A^+ \\ \vn(x) && x\in A^- 
  \end{array} \right. ,
 \end{equation}
 where 
 \begin{equation}\label{5.34.1}
   P(x) = \left\{ \begin{array}{lcc} \frac{x}{|x|} & & |x| \in \left( \frac{2}{3},\frac{4}{3} \right) \\ 0 & & |x|< \frac{1}{3} \end{array} \right.
  \end{equation}
  is the projection map onto $\mathbb{S}^2 \cup \left\{ 0 \right\}$. Using this variation we can certainly say that $\left.\frac{d}{d\epsilon} \mathscr{F}(\vn_\epsilon) \right|_{\epsilon=0} =0$. So by arguing in the same manner as variational problems in Sobolev spaces we find 
 \begin{equation}\label{5.35}
 \begin{split}
 0&= \int_{\left\{ \vn=0\right\}} 0\,dx +\int_{\left\{ \vn \in \mathbb{S}^2 \right\}} \nabla \vv :\nabla \vn +t\left(\vv \cdot\nabla \times \vn+\vn\cdot\nabla \times \vv \right) - (\vn\cdot \vv)\left( |\nabla \vn|^2+2t\vn\cdot \nabla \times \vn\right) \,dx \\
   &=\int_{A^+} \nabla \vv :\nabla \vn +t\left(\vv \cdot\nabla \times \vn+\vn\cdot\nabla \times \vv \right) - (\vn\cdot \vv)\left( |\nabla \vn|^2+2t\vn\cdot \nabla \times \vn\right) \,dx.
   \end{split}
 \end{equation}
 We obtain a similar equation in $A^-$ using the same logic. Integrating by parts on the terms involving the gradients of $\vv$ we find that this is the weak form of
 \begin{equation}\label{5.36}
  \begin{array}{ccl}
   \Delta \vn -2t\nabla \times \vn = -\vn \left( |\nabla \vn|^2+2t\vn\cdot\nabla \times \vn \right) &\quad&  \text{in}\,\, A^+ \\ &&\\
   \frac{\partial \vn}{\partial \nu}+t\vn \times \bm\nu = 0 &\quad& \text{on} \,\, \partial A^+\cap \overline{S}_\vn,
  \end{array}
 \end{equation}
where $\bm\nu$ is the normal to the boundary.
 
\end{proof}

This is a reassuring proposition since it proves that away from the jump set, the minimiser $\vn$ must satisfy the same equation as would be satisfied by a Sobolev director field on the entirety of the domain. However by assuming a little more regularity for the function and its jump set we can strengthen the Euler-Lagrange equation still further.

\begin{theorem}\label{4: Euler Lagrange 3}
 
 Suppose $\vn \in \mathcal{A}$ is a minimiser of \eqref{5.23}. Let $A\subset \subset \Omega$ and suppose that $\vn \in W^{2,2}\left( A\setminus \overline{S_\vn}\right)$ and $\overline{S}_\vn \cap A$ is given as in \eqref{5.31}, where $D\subset \mathbb{R}^{d-1}$ is an open set and $\vphi \in C^{1,\gamma} \left( D, \mathbb{R} \right)$. Then for every $\eta \in C^\infty_0(A,\mathbb{R}^d)$ 
 \begin{equation}\label{5.37}
  \int_{{S}_\vn \cap A} \left[ |\nabla \vn|^2 + 2t \vn \cdot \nabla \times \vn + t^2 |\vn|^2 \right]^{\pm} \nu \cdot \eta \, d\mathcal{H}^{d-1} = K\int_{{S}_\vn\cap A} \text{div}^{S_\vn} \eta\, d\mathcal{H}^{d-1},
 \end{equation}
 where $\nu$ is the upper normal to $\overline{S}_\vn \cap A$.
 
\end{theorem}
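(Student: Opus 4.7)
The strategy is to start from the inner-variation identity \eqref{5.24} of Theorem \ref{4: Euler Lagrange 1}, localised to test fields $\eta \in C^\infty_0(A,\mathbb{R}^d)$, and to transform its bulk part into a jump-surface integral by integrating by parts separately on each side of $\overline{S}_\vn \cap A$. The plan is in four steps: (i) split $A = A^+ \cup (A \cap \overline{S}_\vn) \cup A^-$ according to the graph of $\vphi$; (ii) integrate by parts on each $A^\pm$, using the $W^{2,2}$-regularity of $\vn$ there; (iii) eliminate the interior remainder by the strong Euler-Lagrange system from Proposition \ref{4: Euler Lagrange 2}; (iv) use the natural boundary condition from the same proposition to collapse the resulting surface residue to the bracketed jump in \eqref{5.37}. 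The $C^{1,\gamma}$-regularity of the parametrisation of $\overline{S}_\vn$ enters only to guarantee that the upper normal $\nu$ extends continuously to $\overline{S}_\vn \cap A$, so that the divergence theorem in each $A^\pm$ is legitimate.

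Concretely, I would rewrite the bulk inner-variation integrand in \eqref{5.24} as $T_{ij}\,\partial_i\eta_j$, where
\[
T_{ij} \;=\; \bigl[|\nabla\vn|^2 + 2t\,\vn\cdot\nabla\times\vn + t^2|\vn|^2\bigr]\delta_{ij} \;-\; 2\,\partial_i \vn \cdot \partial_j \vn \;-\; 2t\, A_{ij}
\]
is the Eshelby-type stress tensor associated with translational invariance of the Lagrangian. Integrating by parts on each $A^\pm$ gives
\[
\int_{A^\pm} T_{ij}\,\partial_i\eta_j\,dx \;=\; \int_{\overline{S}_\vn \cap A} T_{ij}^\pm \,\nu^\pm_i\, \eta_j\,d\mathcal{H}^{d-1} \;-\; \int_{A^\pm} \eta_j\,\partial_i T_{ij}\,dx,
\]
with $\nu^+ = -\nu$ and $\nu^- = +\nu$. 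A direct computation shows that $\partial_i T_{ij}$ can be re-expressed as $(\Delta \vn - 2t\,\nabla\times\vn)\cdot\partial_j\vn$ plus terms that vanish by the identity $\vn\cdot\partial_j\vn = 0$ (where $|\vn|=1$) or trivially (where $\vn = 0$). The strong PDE $\Delta\vn - 2t\,\nabla\times\vn = -\vn\bigl(|\nabla\vn|^2 + 2t\,\vn\cdot\nabla\times\vn\bigr)$ from Proposition \ref{4: Euler Lagrange 2} then forces $\partial_i T_{ij} \equiv 0$ in each $A^\pm$, so only the surface contribution $\int_{\overline{S}_\vn \cap A}\bigl(T^-_{ij} - T^+_{ij}\bigr)\nu_i\eta_j\,d\mathcal{H}^{d-1}$ survives from the bulk piece, and is to be combined with the $K\int_{S_\vn}\text{div}^{S_\vn}\eta\,d\mathcal{H}^{d-1}$ term already present in \eqref{5.24}.

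The last step is to simplify $T_{ij}^\pm \nu_i$ on each side of the jump. Writing
\[
T_{ij}\nu_i \;=\; w\,\nu_j \;-\; 2\,(\partial_\nu\vn\cdot\partial_j\vn) \;-\; 2t\,(\partial_\nu\vn\times\vn)_j,
\]
the natural boundary condition $\partial_\nu\vn + t\,\vn\times\nu = 0$ from Proposition \ref{4: Euler Lagrange 2} (applied on each side of $\overline{S}_\vn \cap A$, with the appropriate outward normal) pins $\partial_\nu\vn^\pm$ down in terms of $\vn^\pm$ and $\nu$. Substituting this relation and using $|\vn^\pm| \in \{0,1\}$ together with $\vn\cdot\partial_j\vn = 0$, the two non-scalar contributions rearrange so that $T_{ij}^\pm \nu_i \,\eta_j$ reduces to $w^\pm\,\nu\cdot\eta$. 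Taking the difference between the two sides then gives $[w]^\pm \nu\cdot\eta$, which is exactly the left-hand side of \eqref{5.37}.

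The main obstacle I anticipate is this final algebraic collapse. The natural boundary condition controls $\partial_\nu\vn$ cleanly on each side, but the tangential derivatives of $\vn^+$ and $\vn^-$ are in principle unrelated, so one must carefully track how the cross-product structure of $A_{ij}$ interacts with the boundary condition to kill the unwanted terms. The orientation convention for the bracket $[\,\cdot\,]^\pm$ in \eqref{5.37} is then fixed by taking $\nu$ to be the upper normal to the graph of $\vphi$, and no regularity of the jump surface beyond $C^{1,\gamma}$ is needed: every quantity appearing is the trace of an $L^2$ function on a $C^{1,\gamma}$ hypersurface, which is controlled by the Sobolev trace theorem.
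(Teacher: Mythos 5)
Your strategy is essentially the paper's: localise the inner-variation identity \eqref{5.24}, split $A=A^+\cup(\overline S_{\vn}\cap A)\cup A^-$, integrate by parts on each side using $\vn\in W^{2,2}(A\setminus\overline{S_\vn})$, kill the bulk remainder via the strong Euler--Lagrange equation of Proposition \ref{4: Euler Lagrange 2}, and reduce the surface residue to the jump of $w$ by the natural boundary condition $\partial_\nu\vn+t\,\vn\times\bm\nu=0$. That is exactly the paper's sequence of steps, and the observation that $\partial_i T_{ij}\equiv 0$ in $A^\pm$ after substituting $\Delta\vn-2t\nabla\times\vn=-\vn\bigl(|\nabla\vn|^2+2t\,\vn\cdot\nabla\times\vn\bigr)$ and $\vn\cdot\partial_a\vn=0$ is the paper's identity \eqref{5.40}.

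However, there is a real slip in your Eshelby tensor that is precisely the source of the ``algebraic collapse'' you flag as the main obstacle. Writing the bulk integrand of \eqref{5.24} as $T_{ab}\,\vphi_{a,b}$ (component index $a$, derivative index $b$), one has
\begin{equation}
 T_{ab} \;=\; w\,\delta_{ab} \;-\; 2\,\vn_{i,a}\vn_{i,b} \;-\; 2t\,\mathbf{A}_{ab},
 \qquad \mathbf{A}_{ab}=\epsilon_{bpq}\,\vn_{p,a}\vn_q,
\end{equation}
and integration by parts lands the normal on the \emph{derivative} index, giving $T_{ab}\nu_b$. With this convention the contraction collapses in one line:
\begin{equation}
 T_{ab}\nu_b \;=\; w\,\nu_a \;-\; 2\,\vn_{i,a}\bigl[(\partial_\nu\vn)_i + t\,(\vn\times\bm\nu)_i\bigr] \;=\; w\,\nu_a,
\end{equation}
because the bracket is exactly the natural boundary condition from \eqref{5.36} (this is \eqref{5.39} in the paper). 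No further rearrangement is needed, and the bracket $[\,\cdot\,]^\pm$ in \eqref{5.37} is just the difference of $w$ across the jump. You instead wrote the stress as $T_{ij}\partial_i\eta_j$ with $T_{ij}=w\delta_{ij}-2\partial_i\vn\cdot\partial_j\vn-2tA_{ij}$, which puts the normal on the \emph{other} index of $\mathbf{A}$ (effectively using $\mathbf{A}^T$ in place of $\mathbf{A}$). With that convention $T_{ij}\nu_i=w\nu_j-2(\partial_\nu\vn_k)\vn_{k,j}-2t(\partial_\nu\vn\times\vn)_j$, and substituting the natural boundary condition leaves residual terms like $2t\,(\vn\times\bm\nu)\cdot\partial_j\vn+2t^2\bigl(\nu_j-(\bm\nu\cdot\vn)\vn_j\bigr)$ that do \emph{not} cancel, so the reduction to $w\nu\cdot\eta$ fails as stated. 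This is not a fundamental gap in the strategy but a genuine index error; once the contraction is taken with the gradient index (so that $\vn_{i,a}$, not $\partial_\nu\vn_i$, is the common factor over which you distribute the natural boundary condition), the surface residue vanishes identically and your anticipated obstacle disappears.

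Two smaller points: (i) on the side of $\overline S_{\vn}\cap A$ where $\vn\equiv 0$ you cannot invoke the natural boundary condition directly (the projection variation is frozen there), but $\nabla\vn=0$ and $w=0$ on that side so the stress vanishes and that contribution is trivially zero; (ii) your remark that the $C^{1,\gamma}$ regularity of $\vphi$ is only needed so that the normal extends continuously and the divergence theorem applies to the one-sided traces is consistent with the paper's implicit use of it.
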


\begin{proof}
 
 The essence of the proof is to use integration by parts on equation \eqref{5.24}. We recall that the matrix ${\bf A}$ is defined by ${\bf A}_{i,j} = \sum_{a,b} \epsilon_{jab}\vn_{a,i}\vn_b$. In order to do so we note initially that $\vn \in W^{2,2}(A^+)\cap W^{2,2}(A^-)\cap \mathcal{A}$. We begin with the following calculation 
 \begin{equation}\label{5.38}
  \begin{split}
   &\int_{A^+} \left[ |\nabla \vn|^2+2t\vn\cdot\nabla\times \vn + t^2 |\vn|^2 \right] \text{div} \eta - 2\nabla\vn:\left( \nabla\vn \nabla \eta\right) -2t \nabla \eta: {\bf A}\,dx \\
   =& \int_{S_\vn \cap A} -\left[ |\nabla \vn|^2 + 2t\vn\cdot\nabla\times\vn + t^2 |\vn|^2  \right]^+ \nu\cdot \eta + 2\vn_{i,j}\vn_{i,a}\eta_{a}\nu_j + 2t\eta_a {\bf A}_{a,j}\nu_j \,  d\mathcal{H}^{d-1} \\
   &+\int_{A^+} -\eta_a \frac{\partial}{\partial x_a}\left(\vn_{i,j}\vn_{i,j} + 2t\vn_{i}\epsilon_{ijk}\vn_{k,j} +t^2 \vn_i\vn_i \right) + \eta_a \frac{\partial}{\partial x_j} \left( 2\vn_{i,j}\vn_{i,a} +2t {\bf A}_{a,j} \right)\, dx.
  \end{split}
 \end{equation}
In order to simplify this we note that
\begin{equation}\label{5.39}
 \vn_{i,j}\vn_{i,a}\eta_a \nu_j +t\eta_a {\bf A}_{a,j}\nu_j = \vn_{i,a}\eta_a \left( \frac{\partial \vn_i}{\partial \nu} + t (\vn \times\nu)_i \right) = 0.
\end{equation}
This holds because of the natural boundary condition from equation \eqref{5.36}. Furthermore, using the Euler-Lagrange equation from \eqref{5.36} we can also deduce 
\begin{equation}\label{5.40}
\begin{split}
 &\frac{\partial}{\partial x_a}\left(\vn_{i,j}\vn_{i,j} + 2t\vn_{i}\epsilon_{ijk}\vn_{k,j} \right) +\frac{\partial}{\partial x_j} \left( 2\vn_{i,j}\vn_{i,a} +2t {\bf A}_{a,j} \right) \\
 =& 2(\vn_{i,jj}\vn_{i,a}-2t\vn_{i,a}\epsilon_{ijk}\vn_{k,j})\\
 =& 2\left[ |\nabla \vn|^2 +2t \vn \cdot\nabla \times \vn\right] \sum_i\vn_{i,a}\vn_i =0 .
\end{split}
\end{equation}
Therefore
\begin{equation}\label{5.41}
\begin{split}
 &\int_{A^+} \left[ |\nabla \vn|^2+2t\vn\cdot\nabla\times \vn+ t^2 |\vn|^2 \right] \text{div} \eta - 2\nabla\vn:\left( \nabla\vn \nabla \eta\right) -2t \nabla \eta: {\bf A}\,dx \\
 =& \int_{S_\vn \cap A} -\left[ |\nabla \vn|^2 + 2t\vn\cdot\nabla\times\vn+ t^2 |\vn|^2  \right]^+ \nu\cdot \eta dx,
\end{split}
\end{equation}
and a similar equation holds in the region $A^-$. This gives us the assertion.

\end{proof}

\begin{remark}
 In \cite{ambrosio2000functions} they were able to extend Proposition \ref{4: Euler Lagrange 3} without assuming the extra regularity that $\vn \in W^{2,2}(A\setminus \overline{S_\vn})$. In order to be able to do the same here we would require some a-priori knowledge about the regularity of weak solutions to \eqref{5.36} which we do not have.
\end{remark}

Our final remark on the Euler-Lagrange equation comes courtesy of the divergence theorem on manifolds.

\begin{remark}\cite[Remark 7.39]{ambrosio2000functions}
 If in addition to the assumptions of Theorem \ref{4: Euler Lagrange 3}, the set $A\cap \overline{S}_\vn$ can be written as the graph of a $C^2$ function $\vphi$, then we find that \eqref{5.37} can be written as 
 \begin{equation}\label{5.42}
  -K\text{div}\left( \frac{\nabla \vphi}{\sqrt{1+|\nabla\vphi|^2}} \right) = KH = \left[ |\nabla \vn|^2 + 2t\vn \cdot \nabla \times \vn + t^2 |\vn|^2 \right]^{\pm}
 \end{equation}
 where $H$ is the mean scalar curvature of the set $\overline{S}_\vn \cap A$.

\end{remark}

\subsection*{Local Minimisers}

One final aspect of bounded variation problems to introduce in this section which we have not covered up to this point, is the notion of local minimisers. It is more difficult to define local minimisers for bounded variation problems because the discontinuity set needs to varied simultaneously with the elastic energy. We follow the definition from \cite[Chapter 7]{ambrosio2000functions}.

\begin{definition}[Local minimiser]\label{4: local minimiser}
$\vn \in \mathcal{A}$ is a local minimiser of $I$ if 
\begin{equation}\label{5.42.1}
 \int_A w(\vn,\nabla \vn) \,dx +K\mathcal{H}^2\left( A \cap S_\vn \right) \leqslant \int_A w(\vm,\nabla \vm)\,dx + K\mathcal{H}^2\left( A \cap S_\vm \right)
\end{equation}
whenever $\vm \in \mathcal{A}$ and $\left\{ \vn \neq \vm \right\} \subset\subset A \subset \subset \Omega$.

\end{definition}

This definition is local in the sense that, loosely speaking, the candidate needs only to be a minimiser amongst admissible functions with the same boundary values. We cannot use the notions of strong and weak local minimisers as we could for Sobolev spaces because it would result in absurd predictions. For example any function $\vn \in SBV(\Omega,\mathbb{S}^2)$ with $\nabla \vn = 0$ and any number of jumps of fixed magnitude would be a strong local minimiser of the nematic problem.

\begin{remark}\label{4: remark local min equiv global min}
 It should be noted that if we set up a bounded variation problem where we wish to apply boundary conditions around the entirety of the boundary then $\overline{\Omega} \subset \Omega_\epsilon$. In this situation there is no difference between a local minimiser and a global minimiser.
\end{remark}

%
%
%
% Specific nematic problems and Lavrentiev phenomena
%
%
%

\section{Lavrentiev Phenomena for Nematic Problems}\label{4: sec lavrentiev}

The natural step to take once we have the Euler-Lagrange equations established is to find solutions with a non-trivial jump set. Clearly any $\vn \in W^{1,2}\left(\Omega, \mathbb{S}^2 \right)$ which satisfies the classical Euler-Lagrange equation also satisfies those in the previous section. In this section we will examine two problems associated with the nematic energy
\begin{equation}\label{5.43}
 I(\vn) = \int_\Omega |\nabla \vn |^2 \,dx+K\mathcal{H}^{d-1}(S_\vn).
\end{equation}
For the first we will look for minimisers of \eqref{5.43} when the domain is a unit ball in two or more dimensions with radial boundary conditions. We will show that by removing a small core of isotropic liquid crystal we can lower the energy and prove a Lavrentiev gap phenomenon between Sobolev and SBV functions. In the second we set the problem in a cuboid with constant boundary conditions on the top and bottom faces which are perpendicular to each other. In this situation we can show another Lavrentiev gap phenomenon by finding a global minimiser in both the Sobolev and SBV spaces.

\begin{theorem}\label{4: Lavrentiev gap 1}
 Consider the energy functional $I$ as given in \eqref{5.43}. Let $d\geqslant 2$, if $\Omega = \left.\left\{\, x\in \mathbb{R}^d\, \right|\, |x|<1\, \right\}$ and the sets of admissible functions are given by
 \begin{equation}\label{5.44}
  \mathcal{A}:=  \left\{ \left.\, \vn \in SBV^2\left(\Omega,\mathbb{S}^{d-1}\cup \left\{0\right\} \right) \, \right|\, \vn(x)=x\,\, \text{on}\,\, \partial \Omega\,\right\},
 \end{equation}
 and
 \begin{equation}\label{5.45}
  \mathcal{B}:=\left\{\,\left.  \vn \in W^{1,2}\left(\Omega,\mathbb{S}^{d-1}\right) \, \right|\, \vn(x)=x\,\, \text{on}\,\, \partial \Omega\,\right\}.
 \end{equation}
 Then if $K>1$,
 \begin{equation}\label{5.46}
  \inf_{\vn \in \mathcal{A}} I(\vn) < \inf_{\vn \in \mathcal{B}} I(\vn).
 \end{equation}

\end{theorem}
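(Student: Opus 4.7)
The plan is to construct an explicit $SBV$ competitor obtained from the hedgehog by excising a small ball around the origin, and to show directly that its energy lies strictly below the infimum over $\mathcal{B}$.

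To bound the Sobolev side from below, I would use a slicing argument. For any $\vn \in \mathcal{B}$, the restrictions $\vn|_{\partial B_r}$ are, for almost every $r \in (0,1)$, $W^{1,2}$ maps $\partial B_r \to \mathbb{S}^{d-1}$ of topological degree one, by homotopy invariance with the identity boundary trace. In $d = 2$ the elementary degree bound $\int_{\partial B_r} |\nabla_{\tan} \vn|^2 \, d\mathcal{H}^1 \geqslant 2\pi/r$ for degree-one circle maps, integrated over $r$, forces $\int_B |\nabla \vn|^2 = +\infty$; hence $\mathcal{B} = \emptyset$ and $\inf_\mathcal{B} I = +\infty$. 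For $d = 3$ the Jacobian-degree inequality $\int_{\mathbb{S}^2} |\nabla f|^2 \geqslant 8\pi$ for degree-one maps, sliced over $r$, gives $\inf_\mathcal{B} I = 8\pi$, attained by $x/|x|$. For $d \geqslant 4$ the analogous identity $\inf_\mathcal{B} I = (d-1)\omega_{d-1}/(d-2)$ relies on the deeper result of J\"ager--Kaul and Lin that the hedgehog is the global minimiser among $\mathbb{S}^{d-1}$-valued Sobolev maps with identity boundary data.

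I would then define the $SBV$ competitor
\begin{equation}
 \vn_\epsilon(x) := \left\{ \begin{array}{ll} 0 & |x| < \epsilon, \\ x/|x| & |x| \geqslant \epsilon. \end{array} \right.
\end{equation}
This map is bounded, has classical gradient $\nabla(x/|x|)$ on $\{|x| > \epsilon\}$ and jump set exactly $\partial B_\epsilon$, on which the one-sided traces differ by a unit vector; in particular $\mathcal{H}^{d-1}(S_{\vn_\epsilon}) = \omega_{d-1} \epsilon^{d-1}$. Its trace on $\partial B$ is the identity, so $\vn_\epsilon \in \mathcal{A}$. A direct integration in spherical coordinates yields $I(\vn_\epsilon) = \tfrac{(d-1)\omega_{d-1}}{d-2}(1 - \epsilon^{d-2}) + K \omega_{d-1} \epsilon^{d-1}$ for $d \geqslant 3$, and $I(\vn_\epsilon) = 2\pi \ln(1/\epsilon) + 2\pi K \epsilon$ for $d = 2$.

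In $d = 2$ any $\epsilon \in (0,1)$ immediately gives $I(\vn_\epsilon) < +\infty = \inf_\mathcal{B} I$. In $d \geqslant 3$, the gap $I(\vn_\epsilon) - \inf_\mathcal{B} I = \omega_{d-1} \epsilon^{d-2}\bigl(K\epsilon - \tfrac{d-1}{d-2}\bigr)$ is strictly negative for $\epsilon \in \bigl(0, \tfrac{d-1}{(d-2)K}\bigr)$, and the hypothesis $K > 1$ ensures this range has nonempty intersection with $(0,1)$, producing an admissible $\vn_\epsilon$ that strictly beats the Sobolev infimum. The hardest step is the sharp Sobolev lower bound in dimensions $d \geqslant 4$: in $d = 2, 3$ it follows from an elementary degree/Jacobian estimate on the sphere, whereas in higher dimensions one must invoke the deeper hedgehog-minimality results to close the argument.
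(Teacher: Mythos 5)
Your proposal is essentially the paper's proof: the same competitor obtained by excising a small ball (setting $\vn = 0$ on $\{|x|<\epsilon\}$ and $\vn = x/|x|$ outside), the same energy computation, and for $d\geqslant 3$ the same reliance on Lin's hedgehog-minimality result to fix $\inf_{\mathcal{B}} I = \frac{d-1}{d-2}S_{d-1}$, with $\mathcal{B} = \emptyset$ in $d=2$. The only cosmetic difference is that you supply a slicing/degree argument for the Sobolev lower bound in low dimensions and skip the paper's verification that the optimal radius $\alpha = 1/K$ solves the Euler--Lagrange equation \eqref{5.42}; one small misstatement is your claim that $K>1$ is needed so that $\bigl(0,\tfrac{d-1}{(d-2)K}\bigr)$ meets $(0,1)$ --- that interval is always nonempty, and the hypothesis $K>1$ is really what places the critical radius $1/K$ strictly inside the ball.
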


\begin{proof}

 The infimum over the admissible set $\mathcal{B}$ has been extensively studied and Lin \cite{lin1987remark}, among others, showed that when $d\geqslant 3$, $\vn(x) = \frac{x}{|x|}$ is a global minimiser of $I$ with an energy of $\frac{d-1}{d-2}S_{d-1}$, where $S_{d-1}$ is the surface area of the sphere $\mathbb{S}^{d-1}$. In the case $d=2$, $\mathcal{B}$ is empty so we consider the infimum from \eqref{5.46} to be infinite. To prove the statement we merely need to find a function $\vn \in \mathcal{A}$ such that $I(\vn) < \frac{d-1}{d-2}S_{d-1}$. However we will go further and find a solution of the Euler-Lagrange equation with this property. Let $\alpha \in (0,1)$ and consider the function 
 \begin{equation}\label{5.47}
  \vn_\alpha (x) = \left\{ 
  \begin{array}{ccl} 
   \frac{x}{|x|} & \quad & |x|>\alpha \\ 
   && \\
   0 & \quad & |x|<\alpha
  \end{array}
 \right. .
 \end{equation}
 It is clear that $\vn_\alpha \in \mathcal{A}$ and $S_{\vn_\alpha} = \left\{\, |x| = \alpha \, \right\}$. This set (up to a rotation) can easily be locally written as the graph of the $C^2$ function 
 \begin{equation}\label{5.48}
  \phi(x) =  \sqrt{\alpha^2-\sum_{i=0}^{d-1} x_i^2}.
 \end{equation}
Then when we substitute this into \eqref{5.42} we find
\begin{equation}\label{5.49}
\begin{split}
 -K \text{div}\left( \frac{\nabla \vphi}{\sqrt{1+|\nabla\vphi|^2}} \right)  
 =& -K\text{div} \left( -\frac{x}{\alpha} \right) \\
 =& \frac{(d-1)K}{\alpha} \\
 =& K\alpha \left[ |\nabla \vn_\alpha |^2 \right]^{\pm}.
 \end{split}
\end{equation}
Therefore the Euler-Lagrange equation is satisfied when $\alpha = \frac{1}{K}$ in every dimension $d\geqslant 2$. The associated energy of this state is given by
\begin{equation}\label{5.50}
\begin{split}
 I\left( \vn_{\frac{1}{K}} \right)  &= \int_{\frac{1}{K} < |x| < 1} \frac{(d-1)}{|x|^2}\, dV +K\left[ K^{-(d-1)} S_{d-1} \right] \\
 &=\left\{
 \begin{array}{ccl}
  \frac{d-1}{d-2} S_{d-1} - \frac{S_{d-1}}{(d-2)K^{d-2}} & \quad & d\geqslant 3 \\ & & \\
  \log K + \frac{2\pi}{K} & \quad & d=2
 \end{array} \right.\\
 &=\left\{
 \begin{array}{ccl}
  \inf_{\vm \in \mathcal{B}} I(\vm) - \frac{S_{d-1}}{(d-2)K^{d-2}} & \quad & d\geqslant 3 \\ & & \\
  \log K + \frac{2\pi}{K} & \quad & d=2.
 \end{array} \right.
\end{split}
\end{equation}
This proves the Lavrentiev gap phenomenon and we have also found a strong candidate for the minimiser of the bounded variation problem: $\vn_{\frac{1}{K}}$.
 
\end{proof}

For our second example we consider a standard nematic liquid crystal problem set in a cuboid domain in our new SBV setting. We look to minimise
\begin{equation}\label{5.51}
 I(\vn) = \int_\Omega |\nabla \vn|^2\,dx +K\mathcal{H}^{d-1}(S_\vn)
\end{equation}
where $\Omega = (-L_1,L_1)\times(-L_2,L_2)\times(0,d)$ and the set of admissible functions is given by
\begin{equation}\label{5.51.1}
 \mathcal{A} := \left\{\,\left. \vn \in SBV^2 \left( \Omega, \mathbb{S}^2\cup \left\{ 0 \right\}\right) \, \right|\, \vn|_{z=0} = {\bf e}_1,\,\, \vn|_{z=d} = {\bf e}_3\, \right\}.
\end{equation}
For the next two results we need to use the result about the one-dimensional restriction of SBV functions, Theorem \ref{4: 1D SBV sections}, given in the preliminaries.

\begin{theorem} \label{4: Lavrentiev gap: Nematics}
 If $K>\frac{\pi^2}{4d}$ then the unique global minimiser of \eqref{5.51} over \eqref{5.51.1} is given by
 \begin{equation}\label{5.53}
  \vn^*=\left(\begin{array}{c} \cos\left(\frac{\pi z}{2d}\right) \\0\\ \sin\left(\frac{\pi z}{2d}\right)\end{array}\right).
 \end{equation}
 If $K<\frac{\pi^2}{4d}$ then the global minimisers are given by the one parameter family 
 \begin{equation}\label{5.54}
  \vn_\alpha'=\left\{\begin{array}{lcc} {\bf e}_1&&z\in(0,\alpha)\\ && \\{\bf e}_3&&z\in(\alpha,1),\end{array}\right. 
 \end{equation}
 for $\alpha\in(0,1)$. In the critical case both $\vn_\alpha'$ and $\vn^*$ are global minimisers.
\end{theorem}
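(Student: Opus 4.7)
The plan is to reduce the three-dimensional problem to a family of one-dimensional problems along vertical fibres $\{(x,y)\}\times(0,d)$, solve each one-dimensional problem explicitly, and then reassemble using the equality analysis of an area-formula inequality on the jump set.

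For any $\vn\in\mathcal{A}$, Theorem \ref{4: 1D SBV sections} with $\zeta={\bf e}_3$ gives, for $\mathcal{L}^2$-almost every $(x,y)\in(-L_1,L_1)\times(-L_2,L_2)$, that the vertical slice
\[
\vm_{(x,y)}(z):=\vn(x,y,z)
\]
lies in $SBV^2\left((0,d),\mathbb{S}^2\cup\{0\}\right)$, with jump set $S_{\vm_{(x,y)}}=S_\vn\cap\left(\{(x,y)\}\times(0,d)\right)$ and with boundary values ${\bf e}_1$ at $z=0$ and ${\bf e}_3$ at $z=d$. Combining $|\nabla\vn|^2\geq|\partial_z\vn|^2$ with Fubini for the bulk term, and applying the area formula to the 1-Lipschitz projection $\pi(x,y,z)=(x,y)$ on the rectifiable set $S_\vn$ for the jump term, I would obtain
\[
\int_\Omega|\nabla\vn|^2\,dx\geq\int_{(-L_1,L_1)\times(-L_2,L_2)}\int_0^d|\vm_{(x,y)}'(z)|^2\,dz\,dx\,dy,
\]
\[
\mathcal{H}^2(S_\vn)\geq\int_{S_\vn}|\nu_\vn\cdot{\bf e}_3|\,d\mathcal{H}^2=\int_{(-L_1,L_1)\times(-L_2,L_2)}\mathcal{H}^0(S_{\vm_{(x,y)}})\,dx\,dy,
\]
with equality in the second display if and only if $\nu_\vn=\pm{\bf e}_3$ at $\mathcal{H}^2$-almost every point of $S_\vn$. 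Together these give
\[
I(\vn)\geq\int_{(-L_1,L_1)\times(-L_2,L_2)}F(\vm_{(x,y)})\,dx\,dy,
\]
where $F(\vm):=\int_0^d|\vm'|^2\,dz+K\mathcal{H}^0(S_\vm)$.

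The next step is to solve the one-dimensional problem: for any $\vm\in SBV^2\left((0,d),\mathbb{S}^2\cup\{0\}\right)$ with the stated boundary values, $F(\vm)\geq\min\left(\pi^2/(4d),K\right)$. If $S_\vm=\emptyset$ then $\vm$ is continuous with connected image and therefore takes values in $\mathbb{S}^2$; the standard geodesic estimate via Cauchy--Schwarz gives $\int_0^d|\vm'|^2\,dz\geq d_{\mathbb{S}^2}({\bf e}_1,{\bf e}_3)^2/d=\pi^2/(4d)$, uniquely realised by the constant-speed great-circle arc $\vn^*$. If $S_\vm\neq\emptyset$ then immediately $F(\vm)\geq K$, and equality forces $\vm'\equiv 0$ together with exactly one jump; the only piecewise constant map into $\mathbb{S}^2\cup\{0\}$ respecting the boundary data with a single jump is a member of the family $\vn_\alpha'$ (routing through the value $0$ would require two jumps and hence cost $2K$).

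Assembling these bounds gives $I(\vn)\geq 4L_1L_2\min\left(\pi^2/(4d),K\right)$, which is attained both by $\vn^*$ and by every $\vn_\alpha'$. If $K>\pi^2/(4d)$ then the slicewise optimum is unique and independent of $(x,y)$, forcing $\vn=\vn^*$. If $K<\pi^2/(4d)$ each slice must be some $\vn_{\alpha(x,y)}'$, so $S_\vn$ is the graph of a measurable function $\alpha$; the equality case of the area-formula bound forces $\nu_\vn=\pm{\bf e}_3$, hence $\nabla\alpha=0$, producing precisely the family $\vn_\alpha'$. The critical case $K=\pi^2/(4d)$ then follows immediately. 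The principal technical obstacle is this equality case of the area-formula inequality on the rectifiable set $S_\vn$: it is exactly what prevents the jump surface from tilting to reduce its per-fibre contribution below the horizontal-surface value, and is what turns the one-parameter family $\vn_\alpha'$ into the complete list of minimisers rather than permitting an arbitrary graph $\alpha(x,y)$.
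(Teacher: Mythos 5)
Your proof is correct and shares the paper's basic architecture: slice along the $z$-direction via Theorem~\ref{4: 1D SBV sections}, minimise fibre-by-fibre, and reassemble. There is, however, a genuine difference in how the jump energy is accounted for. The paper uses only the $1$-Lipschitz bound $\mathcal{H}^2(S_\vn)\geq\mathcal{H}^2(\Gamma(S_\vn))$ for the vertical projection and then splits the base into the shadow $\Gamma(S_\vn)$ (where it discards the elastic energy and keeps $K\mathcal{H}^2(\Gamma(S_\vn))$) and its complement (where each fibre lies in $W^{1,2}$ and a Poincar\'e inequality gives $\pi^2/4d$). You instead invoke the generalised area formula for the projection restricted to the rectifiable set $S_\vn$, producing the identity $\int_{S_\vn}|\nu_\vn\cdot{\bf e}_3|\,d\mathcal{H}^2=\int\mathcal{H}^0\bigl(S_{\vm_{(x,y)}}\bigr)\,dx\,dy$, and hence a truly fibrewise one-dimensional functional $F(\vm_{(x,y)})$ counting jumps with multiplicity. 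This is a sharper intermediate inequality (a fibre with three jumps costs $3K$, not $K$), and the equality condition $\nu_\vn=\pm{\bf e}_3$ falls out directly rather than through the paper's more indirect step that $\mathcal{H}^2(S_\vn)=\mathcal{H}^2(\Gamma(S_\vn))$ and $\nabla\vn=0$ together force $S_\vn$ to be a horizontal plane. Your Cauchy--Schwarz geodesic bound replacing the Poincar\'e step is also cleaner and yields the uniqueness of the Sobolev fibre minimiser at the same stroke. Two small points worth tightening: (i) the slicing theorem as quoted gives only $S_\vm=(S_\vn)_{x\zeta}$; the identification $\vm'_{(x,y)}=(\nabla\vn\cdot{\bf e}_3)(x,y,\cdot)$ and the area-formula identity are further, standard BV slicing facts not recorded in the preliminaries (and which the paper tacitly uses too); (ii) in the case $K<\pi^2/4d$, rather than inferring $\nabla\alpha=0$ for the merely measurable graph function $\alpha$, the cleanest route is to note that $\nabla\vn=0$, $\vn\in SBV$, and $\nu_\vn=\pm{\bf e}_3$ give $D_x\vn=D_y\vn=0$ as measures, so $\vn$ depends only on $z$ and must equal some $\vn'_\alpha$. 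Neither affects the correctness of your argument.
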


\begin{proof}

We take an arbitrary $\vn\in\mathcal{A}$. Then we define the set
\begin{equation}\label{5.55}
 \Gamma({S_{\vn}})=\left\{\, (x,y)\,|\,(x,y,z)\in {S_{\vn}}\,\, \text{for some}\,\, z \,\right\}.
\end{equation}
This is the projection of ${S_{\vn}}$ onto the $(x,y)$-plane. Then we have the following inequality
\begin{equation}\label{5.56}
  I(\vn)\geqslant \int_{\Gamma({S_{\vn}})^c}\int_0^d |\nabla\vn|^2\,dzdxdy+K\mathcal{H}^2(\Gamma({S_{\vn}})),
\end{equation}
where $\Gamma({S_{\vn}})^c = (-L_1,L_1)\times(-L_2, L_2) \setminus \Gamma({S_{\vn}})$. From \eqref{5.56} we can use our knowledge from previous work to deduce a further inequality. By Theorem \ref{4: 1D SBV sections} it is clear that for $\mathcal{H}^2$ almost every $(x,y) \in \Gamma \left( S_\vn \right)$, that $\vm(\cdot):=\vn(x,y,\cdot) \in SBV^2\left( (-\epsilon,d+\epsilon),\mathbb{S}^2 \cup \left\{ 0 \right\} \right)$. However we know that $S_\vm = \emptyset$. Therefore
\begin{equation}\label{5.56.1}
 \vn(x,y,\cdot) \in W^{1,2}\left((0,d),\mathbb{S}^2\right),\,\, \vn(x,y,0)={\bf e}_1,\,\,\vn(x,y,d) = {\bf e}_3.
\end{equation}
By using a Poincar\'{e} inequality with a specific constant (see \cite{dym1972fourier}) to know that any function satisfying the conditions in \eqref{5.56.1} also satisfies
\begin{equation}\label{5.57}
 \int_0^d |\nabla\vn(x,y,z)|^2\,dz\geqslant \frac{\pi^2}{4d}.
\end{equation}
By applying this reasoning to equation \eqref{5.56} we get 
\begin{equation}\label{5.58}
 I(\vn)\geqslant \frac{\pi^2}{4d}\mathcal{H}^2(\Gamma(S_{\vn})^c)+K\mathcal{H}^2(\Gamma(S_{\vn})).
\end{equation}
Now we split the proof into the three cases. First we suppose that $K>\frac{\pi^2}{4d}$. Then have the following inequality
\begin{equation}\label{5.59}
 I(\vn)\geqslant\frac{\pi^2}{4d}(4L_1L_2),
\end{equation}
with a necessary condition for equality being 
\begin{equation}\label{5.60}
 \mathcal{H}^2(\Gamma(S_{\vn}))=0. 
\end{equation}
Suppose that $\vn$ satisfies \eqref{5.60} and achieves equality in \eqref{5.59}. These two assumptions allow us to immediately deduce
\begin{equation}\label{5.61}
 \int_0^1 |\nabla\vn|^2\,dx \geqslant \frac{\pi^2}{4d},
\end{equation}
for $\mathcal{H}^2$ almost every, $(x,y) \in (-L_1,L_1)\times(-L_2,L_2)$. Hence 
\begin{equation}\label{5.62}
 I(\vn)\geqslant \frac{\pi^2L_1L_2}{d}+K\mathcal{H}^2(S_{\vn})\geqslant \frac{\pi^2L_1L_2}{d},
\end{equation}
with equality only if $\mathcal{H}^2(S_\vn)=0$. Therefore equality can only be achieved if and only if $\vn = \vn^*$, the unique minimum over Sobolev functions. The second case where $K<\frac{\pi^2}{4d}$ is a very similar one. We have the inequality (from \eqref{5.58}) that 
\begin{equation}\label{5.63}
 I(\vn) \geqslant K(4L_1L_2),
\end{equation}
with a necessary condition for equality being
\begin{equation}\label{5.64}
  \mathcal{H}^2(\Gamma(S_{\vn})^c)=0. 
\end{equation}
As before, if we suppose that $\vn$ satisfies \eqref{5.64}, then we deduce that equality in \eqref{5.63} implies
\begin{equation}\label{5.64.1}
 I(\vn) = \int_\Omega |\nabla \vn|^2\,dx + K\mathcal{H}^2 \left( S_\vn \right) \geqslant \int_\Omega |\nabla \vn|^2 \,dx + K\mathcal{H}^2 \left( \Gamma(S_\vn) \right) \geqslant 4KL_1L_2
\end{equation}
which has equality when $\nabla \vn =0$ and $\mathcal{H}^2 (S_\vn) = \mathcal{H}^2 (\Gamma (S_\vn))$. These relations only hold when
\begin{equation}\label{5.65}
 S_{\vn}=\left\{ \,(x,y,z)\,|\,z=\alpha\,\right\}.
\end{equation}
Hence $\vn = \vn'_\alpha$ for some $\alpha \in (0,1)$. The critical case $K=\frac{\pi^2}{4d}$, immediately follows from \eqref{5.58}.

\end{proof}

%
%
%
% Cholesteric problem
%
%
%

\section{Cholesteric Problem}\label{section: cholesteric problem}

We are now in a position to turn to our attentions towards cholesteric liquid crystals. To show the usefulness of our new bounded variation approach it is important to know how it predictions differ from those made using the classical Sobolev space. To this end we present some results from \cite{Bedford2014} and then study the same problem within this new framework. In the Sobolev framework of \cite{Bedford2014}, the aim was to minimise
\begin{equation}\label{5.66}
 I(\vn) = \int_\Omega |\nabla \vn|^2+2t\vn \cdot\nabla \times \vn + t^2 |\vn|^2 \, dx + K\mathcal{H}^{2}(S_\vn),
\end{equation}
over the following set of admissible functions
\begin{equation}\label{5.76.1}
 \mathcal{A} := \left\{ \,\left. \vn \in W^{1,2} \left( \Omega, \mathbb{S}^2 \right) \, \right|\, \vn|_{z=0} = {\bf e}_1,\,\, \vn|_{z=d} = {\bf e}_3,\,\, \vn|_{x=-L_1} = \vn|_{x=L_1},\,\, \vn|_{y=-L_2} = \vn|_{y=L_2}\, \right\}.
\end{equation}
Initially functions of just the height $z$ were considered. In terms of its Euler angles, any function $\vn = \vn(z) \in \mathcal{A}$ can be written as 
\begin{equation}\label{5.76.2}
 \vn(z) = \left( \begin{array}{c} \cos(\theta(z)) \cos(\phi(z)) \\ \cos(\theta(z)) \sin(\phi(z)) \\ \sin(\theta(z)) \end{array} \right).
\end{equation}
In this situation the following two results were proved

\begin{theorem}\label{thm: example 1}
 The function $\theta^*(z)$ given implicitly by the equation
 \begin{equation}\label{5.76.3}
  \int_0^{\theta^*} \frac{1 }{\left( D - t^2 \cos^2 u \right)^\frac{1}{2} } \, du = 1 \quad \theta^*(1) = \frac{\pi}{2},
 \end{equation}
 with $\phi^*(z) = tz$, determines a director field $\vn^*(z)$ which is the unique global minimum of \eqref{5.66} over admissible functions \eqref{5.76.1} which depend only on $z$.

\end{theorem}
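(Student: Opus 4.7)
The plan is to reduce the three-dimensional Oseen-Frank problem to a scalar variational problem in a single angle $\theta(z)$, by parametrising the director in Euler angles and eliminating the azimuthal angle $\phi$ pointwise.

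First I would write $\vn(z)=(\cos\theta\cos\phi,\cos\theta\sin\phi,\sin\theta)$, which is legitimate because $\vn\in H^1((0,1),\mathbb{S}^2)$ is continuous by the one-dimensional Sobolev embedding, so $\theta$ and $\phi$ can be chosen continuously. A direct computation using $|\nabla\vn|^2=(\theta')^2+(\phi')^2\cos^2\theta$ and $\vn\cdot\nabla\times\vn=-\phi'\cos^2\theta$ yields the clean identity
\begin{equation*}
|\nabla\vn|^2+2t\vn\cdot\nabla\times\vn+t^2|\vn|^2 \;=\; (\theta')^2+\cos^2\theta\,(\phi'-t)^2+t^2\sin^2\theta.
\end{equation*}
Integration over $(x,y)$ contributes a harmless factor $4L_1L_2$, and the boundary data translate to $\theta(0)=0$, $\theta(1)=\pi/2$, $\phi(0)=0$, with $\phi(1)$ unconstrained since $\cos\theta(1)=0$.

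Next I would eliminate $\phi$ by pointwise minimisation. Since $\cos^2\theta\,(\phi'-t)^2\geq 0$, every admissible pair satisfies $J(\theta,\phi)\geq K(\theta):=\int_0^1[(\theta')^2+t^2\sin^2\theta]\,dz$, with equality iff $\phi'=t$ almost everywhere on $\{\cos\theta\neq 0\}$; combined with $\phi(0)=0$ this forces the optimal choice $\phi^*(z)=tz$. The original problem thereby reduces to minimising $K(\theta)$ over the set $\{\theta\in H^1((0,1)) : \theta(0)=0,\,\theta(1)=\pi/2\}$.

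Finally I would solve this scalar problem. Coercivity and weak lower semicontinuity of $K$ on $H^1$ give existence of a minimiser $\theta^*$ by the direct method, and bootstrap for the Euler-Lagrange equation $\theta''=t^2\sin\theta\cos\theta$ produces $\theta^*\in C^\infty([0,1])$. Multiplying by $\theta'$ and integrating yields the first integral $(\theta')^2 = D - t^2\cos^2\theta$ for some constant $D$. Evaluating at $z=0$ gives $D\geq t^2$; the degenerate case $D=t^2$ forces $\theta^*(0)=(\theta^*)'(0)=0$, so by ODE uniqueness $\theta^*\equiv 0$, contradicting $\theta^*(1)=\pi/2$. Hence $D>t^2$ and $(\theta^*)'$ never vanishes, so $\theta^*$ is strictly increasing; separating variables then yields $\int_0^{\theta^*(z)}(D-t^2\cos^2 u)^{-1/2}\,du=z$, and evaluating at $z=1$ produces the transcendental equation stated for $D$. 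Strict monotonicity of $D\mapsto\int_0^{\pi/2}(D-t^2\cos^2 u)^{-1/2}\,du$ on $(t^2,\infty)$ selects a unique $D$, hence a unique $\theta^*$ and in turn a unique $\vn^*$. The main obstacle I expect is this uniqueness step: showing that every minimiser must be smooth and strictly monotone so the first integral pins it down, rather than allowing weak solutions that oscillate or stagnate at $\theta=0$.
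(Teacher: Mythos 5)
This theorem is quoted in the paper from the author's earlier work \cite{Bedford2014} and no proof is reproduced here, so there is no in-paper argument to compare against; I can only assess your proposal on its own terms.

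Your strategy — pass to Euler angles, obtain the algebraic identity $|\nabla\vn|^2+2t\vn\cdot\nabla\times\vn+t^2 = (\theta')^2+\cos^2\theta\,(\phi'-t)^2+t^2\sin^2\theta$, eliminate $\phi$ pointwise, then treat the scalar functional $K(\theta)=\int_0^1(\theta')^2+t^2\sin^2\theta\,dz$ by the first-integral/quadrature method — is the natural and almost certainly intended route, and your computations (including $\vn\cdot\nabla\times\vn=-\phi'\cos^2\theta$, the conserved quantity $(\theta')^2+t^2\cos^2\theta=D$, the exclusion of $D=t^2$, and the monotonicity of $D\mapsto\int_0^{\pi/2}(D-t^2\cos^2 u)^{-1/2}\,du$) all check out. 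You also correctly flagged the place where the uniqueness hangs together: smoothness and strict monotonicity of any minimiser of $K$, which you then dispose of via bootstrapping and the sign of $D-t^2$.

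The one genuine soft spot you leave undischarged is the non-uniqueness of the angular lift. A continuous $\vn$ with $\vn(0)=\mathbf{e}_1$ and $\vn(1)=\mathbf{e}_3$ admits lifts $(\theta,\phi)$ with $\theta(0)=0$ but $\theta(1)=\tfrac{\pi}{2}+2\pi m$ for any $m\in\mathbb{Z}$ (and an analogous ambiguity in $\phi(0)$); your reduction quietly normalises to $m=0$. Since $K(\theta)\geqslant\int_0^1(\theta')^2\,dz\geqslant(\theta(1)-\theta(0))^2$ by Jensen, any lift with $m\neq 0$ already has energy strictly exceeding the $m=0$ minimum, so the normalisation is legitimate, but a complete write-up should say so. A secondary, cosmetic point: the displayed identity $\int_0^{\theta^*}(D-t^2\cos^2 u)^{-1/2}\,du=1$ in the statement should read $=z$, with $z=1$ and $\theta^*(1)=\tfrac{\pi}{2}$ selecting $D$, which is exactly what your separation of variables produces; you have read the statement in the intended way.
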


\begin{theorem}\label{thm: example 2}
 There exists some $t^*>0$ such that one-dimensional minimum $\vn^*(z)$ from the previous theorem is the global minimum of \eqref{5.66} over \eqref{5.76.1} if $t<t^*$.
\end{theorem}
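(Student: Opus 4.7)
I would argue by compactness and contradiction, anchored at the unperturbed problem ($t=0$), whose minimiser is easy to identify; nondegeneracy of its second variation then localises all low-$t$ minimisers near $\vn^*_0$, and a Lyapunov--Schmidt argument forces equality with $\vn^*_t$.

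First I would establish two baseline facts. At $t=0$ the functional reduces to the Dirichlet integral, and the slice-by-slice sharp geodesic bound $\int_0^d |\partial_z \vn(x,y,\cdot)|^2 \, dz \geqslant \pi^2/(4d)$ (equality only for the unit-speed geodesic from ${\bf e}_1$ to ${\bf e}_3$, which are orthogonal and thus joined by a unique minimising geodesic on $\mathbb{S}^2$) identifies the unique global minimiser of $I_0$ over $\mathcal{A}$ as $\vn^*_0(z) = \cos(\pi z/(2d)) {\bf e}_1 + \sin(\pi z/(2d)) {\bf e}_3$. Continuous dependence in \eqref{5.76.3} together with dominated convergence in the energy gives $\vn^*_t \to \vn^*_0$ strongly in $W^{1,2}$ and $I_t(\vn^*_t) \to I_0(\vn^*_0)$ as $t \to 0^+$.

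Next, suppose the theorem fails, so there exist $t_n \to 0^+$ and competitors $\vm_n \in \mathcal{A}$ (which by the direct method in $W^{1,2}$ may be taken to be global minimisers of $I_{t_n}$) with $I_{t_n}(\vm_n) < I_{t_n}(\vn^*_{t_n})$. Since $|\vm_n|=1$ a.e.\ and the energies are uniformly bounded, $\{\vm_n\}$ is bounded in $W^{1,2}$. Extract a subsequence with $\vm_n \rightharpoonup \vm_0$ weakly in $W^{1,2}$ and strongly in $L^2$, so $\vm_0 \in \mathcal{A}$. Since $|\int_\Omega \vm_n \cdot \nabla \times \vm_n| \leqslant \|\vm_n\|_2 \|\nabla \vm_n\|_2$ is uniformly bounded and $t_n \to 0$, lower semicontinuity of the Dirichlet integral gives
\begin{equation*}
I_0(\vm_0) \;\leqslant\; \liminf_{n} I_{t_n}(\vm_n) \;\leqslant\; \lim_{n} I_{t_n}(\vn^*_{t_n}) \;=\; I_0(\vn^*_0),
\end{equation*}
whence $\vm_0 = \vn^*_0$ by the uniqueness established above. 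The same chain forces $\|\nabla \vm_n\|_2 \to \|\nabla \vn^*_0\|_2$, which combined with the weak convergence upgrades to strong convergence in $W^{1,2}$.

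The hardest step is to turn this strong convergence into the equality $\vm_n = \vn^*_{t_n}$ for $n$ large. Because ${\bf e}_1$ and ${\bf e}_3$ are linearly independent, no nontrivial isometry of $\mathbb{S}^2$ fixes both, so $\vn^*_0$ has no continuous symmetry kernel, and its Jacobi form on admissible tangent fields $\vu$ (those with $\vu \cdot \vn^*_0 = 0$ a.e., $\vu|_{z=0,d}=0$, and $(x,y)$-periodic) satisfies
\begin{equation*}
\int_\Omega |\nabla \vu|^2 - \tfrac{\pi^2}{4d^2}|\vu|^2 \, dx \;\geqslant\; \int_\Omega |\nabla_{x,y} \vu|^2 + \tfrac{3\pi^2}{4d^2}|\vu|^2 \, dx,
\end{equation*}
using the Dirichlet Poincar\'e bound $\int_0^d |\partial_z \vu|^2 \, dz \geqslant (\pi/d)^2 \int_0^d |\vu|^2 \, dz$, so the Hessian is coercive. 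A Lyapunov--Schmidt / implicit function theorem argument applied to the constrained Euler--Lagrange system then furnishes a $W^{1,2}$-neighbourhood $\mathcal{N}$ of $\vn^*_0$ in $\mathcal{A}$ and a threshold $\tau>0$ such that for every $t \in [0,\tau]$ the only critical point of $I_t$ in $\mathcal{N}$ is $\vn^*_t$. Since $\vm_n \to \vn^*_0$ strongly in $W^{1,2}$, for large $n$ we have $\vm_n \in \mathcal{N}$ and $t_n < \tau$; as $\vm_n$ is a minimiser, hence a critical point, $\vm_n = \vn^*_{t_n}$, contradicting the assumption $I_{t_n}(\vm_n) < I_{t_n}(\vn^*_{t_n})$. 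The key technical point is verifying the uniform-in-$t$ invertibility of the linearised operator required by the Lyapunov--Schmidt reduction; once that spectral gap is in hand, the remainder is routine.
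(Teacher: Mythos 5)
The proposal takes a genuinely different route from the paper's. The paper imports Theorem \ref{thm: example 2} from \cite{Bedford2014}, but proves the closely analogous SBV statement, Theorem \ref{4: Cholesteric small t large K global min}, by a direct stability expansion: writing $\vv = \vm - \vn^*$, it expands $I(\vm) - I(\vn^*)$, uses the pointwise Euler--Lagrange equation $\Delta \vn^* - 2t\nabla\times\vn^* + \lambda\vn^* = 0$ to annihilate the linear-in-$\vv$ terms, and then shows the quadratic remainder is nonnegative using the Sethna rearrangement, an a priori bound on $\lambda$, Cauchy's inequality, and a sharp one-dimensional Poincar\'{e} inequality in $z$. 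This is constructive and gives an explicit description of $t^*$. Your compactness-plus-contradiction argument correctly identifies $\vn^*_0$ as the unique limit of sequences of global minimisers at $t_n \to 0$ and correctly establishes coercivity of the Jacobi form; up to there the reasoning is fine, if much softer than the paper's.

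The genuine gap is in the final Lyapunov--Schmidt / implicit function theorem step, which you dismiss as routine once the spectral gap is in hand. The Euler--Lagrange operator here is quasilinear, with a nonlinearity of the form $\vn\bigl(|\nabla\vn|^2 + 2t\vn\cdot\nabla\times\vn\bigr)$; for $d=3$ this does not define a $C^1$ map from $W^{1,2}(\Omega,\mathbb{S}^2)$ into $W^{-1,2}$, since $|\nabla\vn|^2$ is merely in $L^1$. The IFT therefore cannot be invoked on the $W^{1,2}$-neighbourhood $\mathcal{N}$ you construct; one must instead work in a stronger topology (e.g.\ $W^{2,p}$ or $C^{2,\alpha}$), which in turn demands (i) that the competitors $\vm_n$, which are only known to be global minimisers converging to $\vn^*_0$ strongly in $W^{1,2}$, are in fact smooth, and (ii) that they converge in this stronger topology. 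This calls on Schoen--Uhlenbeck-type $\epsilon$-regularity for harmonic-map-type energies and is substantially more than routine. The uniform-in-$t$ invertibility you flag as the key issue is actually under control by your Hessian estimate; the unaddressed regularity upgrade is where the argument is incomplete. The paper's direct expansion avoids the implicit function theorem altogether and so sidesteps this entirely.
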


However to the best of our knowledge it is a completely open analytical problem to find minimisers in the high chirality limit. We note that because we have scaled our domain to a height $1$, the variable $t$ is proportional to the commonly cited confinement ratio, $d/P$, where $d$ is the height of the domain and $P$ is the pitch of the cholesteric. We will now show that in our bounded variation framework we can extend the analysis of Theorems \ref{thm: example 1} and \ref{thm: example 2}. Hence, for the rest of the paper, we now assume that we are looking to minimise \eqref{5.66} over the set of admissible functions
\begin{equation}\label{5.76.4}
 \mathcal{A} := \left\{ \,\left. \vn \in SBV^{2} \left( \Omega, \mathbb{S}^2\cup \left\{ 0 \right\} \right) \, \right|\, \vn|_{z=0} = {\bf e}_1,\,\, \vn|_{z=d} = {\bf e}_3,\,\, \vn|_{x=-L_1} = \vn|_{x=L_1},\,\, \vn|_{y=-L_2} = \vn|_{y=L_2}\, \right\}.
\end{equation}
In the new setting we can demonstrate that if the cholesteric twist is small enough, the minimum of the variational problem is the smooth one variable minimiser. Whereas if the cholesteric twist is large enough, the minimiser of the problem cannot be a function of a single variable.

\begin{theorem}\label{4: Cholesteric small t large K global min}
 There exists constants $t^*,K^*>0$ such that if $t<t^*$ and $K>K^*$, then the unique global minimum of \eqref{5.66} over \eqref{5.76.4} is the smooth function $\vn(z)$ defined in Theorem \ref{thm: example 1}

\end{theorem}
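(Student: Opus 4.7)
The plan is to combine the Sobolev result of Theorem \ref{thm: example 2} with a jump-elimination argument that forbids any $SBV$ minimiser to carry positive $\mathcal{H}^2(S_\vn)$ once $K$ is chosen large enough. Since $\vn^*\in\mathcal{A}$, every minimiser of \eqref{5.66} already has energy at most $I(\vn^*)$; the goal is to show that, for $K>K^*$ and $t<t^*$, no admissible competitor with a non-trivial jump set can match this value.

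The first step is a local variational lemma. Suppose, for contradiction, that a minimiser $\vn\in\mathcal{A}$ has $\mathcal{H}^2(S_\vn)>0$. Fix a density-one point $x_0\in S_\vn$ at which $\vn$ has approximate one-sided traces $\vn_\pm(x_0)\in\mathbb{S}^2\cup\{0\}$ and $S_\vn$ has a measure-theoretic tangent plane -- such points form a full-$\mathcal{H}^2$ subset of $S_\vn$ by the fine structure theory of $SBV$. For each small $r>0$ with $B(x_0,r)\subset\Omega$ I would construct a competitor $\vm_r\in\mathcal{A}$ by the following recipe: set $\vm_r$ equal to the constant $\vn_-(x_0)$ (or $0$ if both one-sided traces vanish) on the inner ball $B(x_0,r/2)$; on the annular shell $B(x_0,r)\setminus B(x_0,r/2)$ interpolate linearly in the radial variable between this constant and the original $\vn$; then apply the nearest-point projection onto $\mathbb{S}^2\cup\{0\}$ to enforce the value constraint; outside $B(x_0,r)$ set $\vm_r\equiv\vn$. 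A standard Dirichlet calculation (gradient of order $1/r$ on a shell of volume $\sim r^3$) gives $\int_{B(x_0,r)}|\nabla\vm_r|^2\,dx\leq Cr$, and the chiral and quadratic terms in \eqref{5.66} are bounded similarly by $O(tr^2)+O(t^2r^3)$. On the jump side, the density-one condition ensures that $\mathcal{H}^2(S_\vn\cap B(x_0,r/2))\geq cr^2$ is erased in $\vm_r$, while any new discontinuities in $S_{\vm_r}$ live on $\partial B(x_0,r/2)$ and possibly on a thin transition slab, of total area $O(r^2)$ with strictly smaller constant. Collecting the estimates yields
\begin{equation}
I(\vm_r)-I(\vn)\leq Cr - Kc'r^2
\end{equation}
for constants $C,c'>0$ independent of $K$. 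This is strictly negative as soon as $r<Kc'/C$, so for $K$ exceeding some $K^*$ (large enough that such an $r$ fits inside $\mathrm{dist}(x_0,\partial\Omega)$) the competitor $\vm_r$ beats $\vn$, contradicting minimality. Hence $\mathcal{H}^2(S_\vn)=0$ whenever $K>K^*$.

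With $\mathcal{H}^2(S_\vn)=0$, the $SBV$ structure theorem places $\vn$ in $W^{1,2}(\Omega,\mathbb{R}^3)$. Since $\mathbb{S}^2$ and $\{0\}$ are disjoint closed subsets of $\mathbb{R}^3$, the absolute continuity on lines guaranteed by Theorem \ref{absolutely continuous} forces $\vn$ to take values in a single connected component along almost every coordinate line; the Dirichlet data on $z=0$ and $z=d$ rule out $\vn\equiv 0$, so $\vn\in W^{1,2}(\Omega,\mathbb{S}^2)$ and meets the hypotheses of Theorem \ref{thm: example 2}. Taking $t^*$ from that theorem then identifies $\vn$ with $\vn^*$ and delivers the claimed uniqueness for $t<t^*$.

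The hard part is the local competitor construction, specifically arranging the projection onto $\mathbb{S}^2\cup\{0\}$ so that no new jump of $\mathcal{H}^2$-measure exceeding $o(r^2)$ is produced. The delicate case is when $\vn_+(x_0)\in\mathbb{S}^2$ but $\vn_-(x_0)=0$ (or vice versa); then the interpolant must first be damped to $0$ across the tangent plane before normalisation, producing a transition slab of width comparable to $r$ within the annulus. Verifying that this modification retains an $O(r)$ elastic cost while keeping the total new perimeter below the original savings of order $r^2$ requires a short but careful calculation, standard in the Mumford--Shah and $SBV$ literature (see \cite[Section 4.6]{ambrosio2000functions}).
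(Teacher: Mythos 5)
Your approach is genuinely different from the paper's, and unfortunately the key step -- the jump-elimination lemma -- has a serious gap.

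The paper proves the theorem by a direct global energy comparison: it expands $I(\vm)-I(\vn^*)$ algebraically, uses the Euler--Lagrange equation satisfied by $\vn^*$ together with the SBV Gauss--Green formula (Theorem \ref{4: Gauss Green}) to show that the cross terms that would normally vanish in a Sobolev integration by parts are in fact bounded by $C(1+t)\mathcal{H}^2(S_\vm)$, and then combines a column-by-column Poincar\'e inequality with the coercivity of the quadratic remainder. The comparison is with \emph{every} competitor at once; no local competitor construction is involved and no appeal is made to the Sobolev-space result of Theorem~\ref{thm: example 2} until the final uniqueness step.

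Your proposed jump-elimination lemma does not work as stated. The quantitative accounting is wrong: the erased jump area in $B(x_0,r/2)$ is asymptotically $\pi(r/2)^2=\pi r^2/4$, but the sphere $\partial B(x_0,r/2)$ on which a new mismatch would be created has area $4\pi(r/2)^2=\pi r^2$, four times as large, so a naive disc-replacement \emph{increases} the perimeter term. Making the radial interpolant continuous across $\partial B(x_0,r/2)$ avoids that particular jump, but the subsequent nearest-point projection onto $\mathbb{S}^2\cup\{0\}$ then introduces discontinuities along the level sets where the interpolant crosses the midpoint between the two components. In the generic situation $\vn_+,\vn_-\in\mathbb{S}^2$ with $\vn_+\neq\vn_-$, the radial interpolant $(1-\lambda)\vn_+(x_0)+\lambda\vn_-(x_0)$ dips below unit length in the shell, and the projection forces jumps on spherical caps of area $O(r^2)$ with a constant that is not obviously smaller than $\pi/4$, contrary to what you assert. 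Additionally, $S_\vn$ is not removed on the whole shell but only near its inner boundary, so the savings is also smaller than you estimate. Finally, even taking your inequality $I(\vm_r)-I(\vn)\leqslant Cr-Kc'r^2$ at face value, it is negative precisely when $r>C/(Kc')$, not when $r<Kc'/C$, so the relevant regime is $r$ \emph{large} relative to $1/K$, which competes with the density-one requirement that $r$ be small; these two constraints are compatible only for $K$ large, so the sign error is repairable, but the area accounting is not. The cleanest salvage of the jump-elimination idea would use a geodesic (on-sphere) interpolant to avoid the projection entirely, but then one must handle the case $\vn_-(x_0)=0\neq\vn_+(x_0)$ and ensure the construction produces an element of $SBV^2$; this would require the kind of careful estimates one finds in the Mumford--Shah regularity literature, and it is not a short step. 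The last part of your argument -- that a Sobolev-class minimiser with values in $\mathbb{S}^2\cup\{0\}$ must in fact take values in $\mathbb{S}^2$ because $|\vn|^2\in W^{1,1}$ takes finitely many values and the domain is connected, after which Theorem~\ref{thm: example 2} applies -- is fine and would close the argument if the jump-elimination step could be made rigorous.
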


\begin{proof}
 
 During this proof, $C$ will represent any arbitrary positive constant. As such its value may not be constant during calculations. Let $\vn$ be the smooth minimiser defined in Theorem \ref{thm: example 1} and let $\vm \in \mathcal{A}$ be arbitrary. The main work of the proof is to show that 
 \begin{equation}\label{5.1.2}
 \begin{split}
  I(\vm)-I(\vn) \geqslant & \int_\Omega |\nabla \vm - \nabla \vn|^2 + 2t \left( \vn -\vm \right) \cdot \nabla \times \left( \vn - \vm \right) -\lambda |\vn-\vm|^2 +(\lambda+t^2)\left[|\vm|^2 - 1\right]\, dx \\ & - C(1+t) \mathcal{H}^2 \left( S_\vm \right) + K \mathcal{H}^2 \left( S_\vm \right),
 \end{split}
 \end{equation}
 where $\lambda := |\nabla \vn|^2 + 2t\vn \cdot \nabla \times \vn$. We start with a simple rearrangement
 \begin{equation}\label{5.1.3}
  \begin{split}
   I(\vm) - I(\vn) & = \int_\Omega |\nabla \vm|^2 - |\nabla \vn|^2 +2t \vm \cdot \nabla \times \vm -2t \vn \cdot \nabla \times \vn +t^2 |\vm|^2 -t^2 \,dx +K\mathcal{H}^2(S_\vm) \\
   & = \int_\Omega |\nabla \vm - \nabla \vn|^2 +2t(\vm - \vn) \cdot \nabla \times (\vm-\vn) -\lambda |\vn-\vm|^2 + (\lambda+t^2) \left[ |\vm|^2 - 1 \right] \,dx 
   \\ & \quad + 2\int_\Omega \nabla \vn:\nabla \vm + t(\vn\cdot \nabla \times \vm + \vm \cdot \nabla \times \vn ) -\lambda \vm\cdot \vn \, dx+ K\mathcal{H}^2 \left( S_\vm \right). 
  \end{split}
 \end{equation}
 Clearly to show \eqref{5.1.2}, our aim is to prove that the final integral term in the above equation can be estimated by $C(1+t)\mathcal{H}^2 \left( S_\vm \right)$. This is possible because it is almost the weak form of the Euler-Lagrange equation which is satisfied by $\vn$. We know that 
 \begin{equation}\label{5.1.4}
  \Delta \vn -2t\nabla \times \vn + \lambda \vn=0.
 \end{equation}
 Hence by multiplying by $\vm$ and integrating we get
 \begin{equation}\label{5.1.5}
  \int_\Omega \vm \cdot \left( \Delta \vn -2t \nabla \times \vn +\lambda \vn \right)\,dx=0.
 \end{equation}
Integration by parts is a little different in SBV because we have the discontinuity set to deal with. However a generalised Gauss-Green formula still holds.

\begin{theorem}\cite[Thm 10.2.1]{buttazzo2006variational}\label{4: Gauss Green}
If $U\subset \mathbb{R}^d$ is an open set (with Lipschitz boundary), $u \in SBV(\Omega,\mathbb{R})$, and $\vphi \in C^1 \left( \overline{U} , \mathbb{R}^3 \right)$ then
\begin{equation}\label{5.1.6}
 \int_\Omega \vphi \cdot \nabla u \,dx + \int_{S_u} (u^+-u^-)\vphi \cdot {\bm \nu} d\mathcal{H}^{d-1} = \int_\Omega \vphi \cdot Du = -\int_\Omega u \, div\, \vphi\,dx + \int_{\partial \Omega} u \vphi\cdot {\bm \nu} d\mathcal{H}^{d-1}.
\end{equation}

\end{theorem}

By applying the above equation to each term of $\vm$ separately we get 
\begin{equation}\label{5.1.7}
 \int_\Omega \nabla \vm \cdot \nabla \vn \, dx = - \int_\Omega \vm \cdot \Delta \vn\,dx + \int_{\partial \Omega} \vm \cdot (\nabla \vn \cdot {\bm \nu} )\,d\mathcal{H}^2 -\sum_i \int_{S_{\vm_i}} (\vm_i^+-\vm_i^-)\nabla \vn \cdot {\bm \nu}\, d\mathcal{H}^2.
\end{equation}
The term around the boundary can be shown to be zero using our knowledge of $\vn$ and the boundary conditions for $\vm$. On the lateral faces we simply have that $\nabla \vn \cdot {\bm \nu} = 0$ and on the top and bottom faces we have that 
\begin{equation}\label{5.1.8}
  \vm \cdot (\nabla \vn \cdot {\bm \nu} )|_{z=0} = -\vn_{1}'(0) =\left. \theta'\sin(\theta)\cos(tz)-t\cos(\theta)\sin(tz)\right|_{z=0} = 0 
\end{equation}
and
\begin{equation}\label{5.1.9}
  \vm \cdot (\nabla \vn \cdot {\bm \nu} )|_{z=d} = \vn_{3}'(d) = \left.\theta'\cos(\theta)\right|_{z=d} = 0.
\end{equation}
Furthermore the integral over the jump set in \eqref{5.1.7} can easily be estimated by 
\begin{equation}\label{5.1.10}
 \left|\sum_i \int_{S_{\vm_i}} (\vm_i^+-\vm_i^-)\nabla \vn \cdot {\bm \nu}\, d\mathcal{H}^2\right| \leqslant C ||\vn||_{1,\infty} \mathcal{H}^2 (S_\vm) = C \mathcal{H}^2 (S_\vm).
\end{equation}
Hence
\begin{equation}\label{5.1.11}
 \left|\int_\Omega \vm\cdot \Delta \vn +\nabla \vm : \nabla \vn\,dx \right| \leqslant C \mathcal{H}^2(S_\vm).
\end{equation}
Using very similar logic we can also show that 
\begin{equation}\label{5.1.12}
 \left|\int_\Omega \vn\cdot \nabla \times \vm -\vm \cdot \nabla \times \vn\,dx \right| \leqslant C \mathcal{H}^2 (S_\vm).
\end{equation}
By substituting \eqref{5.1.12} and \eqref{5.1.11} into \eqref{5.1.3} we deduce that
\begin{equation}\label{5.1.13}
 I(\vm)-I(\vn) \geqslant \int_\Omega |\nabla \vv|^2 +2t\vv\cdot \nabla \times \vv -\lambda |\vv|^2 +(\lambda+t^2) \left[ |\vm|^2-1 \right]\,dx + (K-C(1+t))\mathcal{H}^2(S_\vm),
\end{equation}
where $\vv:=\vm-\vn$. If we define the set 
\begin{equation}\label{5.1.14}
 \Gamma(S_\vm) := \left\{ \, (x,y) \, |\,\ (x,y,z) \in S_\vm\,\, \text{for some}\,\, z\,\right\},
\end{equation}
we can estimate the integral in \eqref{5.1.13} in the following way. By performing an rearrangement first devised in \cite{sethna1983relieving}, we obtain that 
\begin{equation}\label{5.1.14b}
 |\nabla \vv|+2t\vv \nabla \times \vv +t^2 |\vv|^2 = \sum_{i,j}\left( \vv_{i,j} - t\sum_k\epsilon_{ijk}\vv_k\right)^2 - t^2|\vv|^2 \geqslant -t^2|\vv|^2.
\end{equation}
We also know that $|\vv|\leqslant 2$ and $|\lambda|\leqslant \frac{\pi^2}{4} + t^2$. Hence the integrand in \eqref{5.1.13} is bounded below by $-C(1+t^2)$, therefore
\begin{equation}\label{5.1.15}
\begin{split}
 I(\vm)-I(\vn) \geqslant & \int_{\Gamma(S_\vn)^c} \int_0^d |\nabla \vv|^2+2t\vv\cdot\nabla \times \vv -\lambda |\vv|^2 \,dz \,dx\,dy \\ & -C(1+t^2) \mathcal{H}^2\left( \Gamma(S_\vm) \right) +(K-C(1+t))\mathcal{H}^2(S_\vm).
 \end{split}
\end{equation}
By the definition of $\Gamma(S_\vm)$ we know that for $\mathcal{H}^2$ almost every $(x,y) \in \Gamma(S_\vm)^c$, we have $\vv(x,y,\cdot) \in W^{1,2}_0\left( (0,1),\mathbb{R}^3 \right)$. Fortunately we can estimate the integrand above. It can be shown that $\lambda = -t^2 + \theta'(0)^2+2t^2 \sin^2  \theta$ and $\theta'(0)^2 \geqslant \frac{\pi^2}{4}$. Hence using Cauchy's inequality we find
\begin{equation}\label{5.1.15b}
\begin{split}
 &|\nabla \vv|^2 +2t \vv \cdot \nabla \times \vv - \lambda |\vv|^2 \\
 \geqslant & |\nabla \vv|^2 -2t|\vv|\,|\nabla \times \vv| - \left( \frac{\pi^2}{4} +t^2 \right) |\vv|^2 \\
 \geqslant & |\nabla \vv|^2 -2\sqrt{2}|\vv|\,|\nabla \vv| - \left( \frac{\pi^2}{4} +t^2 \right) |\vv|^2 \\
 \geqslant & |\nabla \vv|^2\left( 1- \frac{1}{2\sqrt{2}} \right)-\left( \frac{\pi^2}{4} +t^2+4\sqrt{2}t^2\right) |\vv|^2.
 \end{split}
\end{equation}
Therefore by using a Poincar\'{e} inequality with the specific constant $\pi^2$ \cite{dym1972fourier} we see that there exists some $t^*>0$, such that if $t \in (0,t^*)$ then 
\begin{equation}\label{5.1.16}
 I(\vm) - I(\vn) \geqslant \alpha \int_{\Gamma(S_\vm)^c}\int_0^d |\vm-\vn|^2 \,dz\,dx\,dy -C(1+t^2) \mathcal{H}^2\left( \Gamma(S_\vm) \right) +(K-C(1+t))\mathcal{H}^2(S_\vm).
\end{equation}
Finally we note that $\mathcal{H}^2 (\Gamma(S_\vm)) \leqslant \mathcal{H}^2 (S_\vm)$ so that if $K$ is sufficiently large then 
\begin{equation}\label{5.1.17}
 I(\vm)-I(\vn) \geqslant \alpha \int_{\Gamma(S_\vm)^c}\int_0^d |\vm-\vn|^2 \,dz\,dx\,dy  + \beta \mathcal{H}^2 (S_\vm) \geqslant 0.
\end{equation}
To demonstrate uniqueness we note that for equality to hold in \eqref{5.1.17}, we require that $\mathcal{H}^2 (S_\vm)=0$ so that $\vm \in W^{1,2}\left(\Omega ,\mathbb{S}^2 \right)$. But we already know that if $t<t^*$ then $\vn$ is the unique global minimum amongst such functions, hence it is the unique global minimum here as well.
 
\end{proof}

\begin{proposition}\label{4: cholesteric multidimensional minimisers}
 There exists some $\overline{t}>0$ such that if $t>\overline{t}$, there exists a state $\vn\in\mathcal{A}$ such that 
 \begin{equation}\label{5.77}
  I(\vn)<0,
 \end{equation}
whereas any $\vm = \vm(z) \in \mathcal{A}$ satisfies $I(\vm)\geqslant0$.

\end{proposition}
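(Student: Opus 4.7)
The plan is to prove the two statements separately. For the lower bound on $z$-dependent functions we parameterise via Euler angles; for the existence of a state with negative energy we construct a trial director based on the double-twist cholesteric configuration, whose bulk energy is pointwise strictly negative near its axis and integrates to a net negative value that eventually overwhelms the jump cost.

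For the one-variable lower bound, fix $\vm = \vm(z) \in \mathcal{A}$. By Theorem~\ref{4: 1D SBV sections} we may regard $\vm$ as an element of $SBV^2\bigl((0,d),\mathbb{S}^2\cup\{0\}\bigr)$. On each component of $\{\vm \neq 0\}$ write $\vm = (\cos\theta\cos\phi,\cos\theta\sin\phi,\sin\theta)$ with $\theta,\phi$ locally Sobolev. A direct computation yields
\[
|\vm'|^2 + 2t\,\vm\cdot\nabla\times\vm + t^2|\vm|^2 = (\theta')^2 + \cos^2\theta\,(\phi'-t)^2 + t^2\sin^2\theta \;\geqslant\; 0,
\]
while the integrand vanishes on $\{\vm = 0\}$. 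The jump term $K\mathcal{H}^2(S_\vm)$ is non-negative, and therefore $I(\vm) \geqslant 0$.

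For the construction giving negative energy, on a disk $B_R \subset \mathbb{R}^2$ (cylindrical coordinates $(\rho,\phi)$) consider the double-twist state
\[
\vn_\beta(x,y) = \cos(\beta\rho)\,{\bf e}_3 + \sin(\beta\rho)\,\hat\phi, \qquad \hat\phi = (-y,x,0)/\rho,
\]
extended trivially in $z$. A direct computation gives $|\nabla\vn_\beta|^2 = \beta^2 + \sin^2(\beta\rho)/\rho^2$ and $\vn_\beta\cdot\nabla\times\vn_\beta = \beta + \sin(2\beta\rho)/(2\rho)$, so that
\[
w(\vn_\beta,\nabla\vn_\beta) = (\beta+t)^2 + \frac{\sin^2(\beta\rho)}{\rho^2} + t\,\frac{\sin(2\beta\rho)}{\rho}.
\]
Setting $\beta = -t$ kills the first term, and after the substitution $v = t\rho$,
\[
\int_{B_R} w(\vn_{-t})\,dx\,dy = 2\pi\!\int_0^{tR}\!\!\left[\frac{\sin^2 v}{v} - \sin(2v)\right]dv,
\]
which for $tR = \pi/2$ evaluates to a strictly negative constant, call it $-C_0$ with $C_0>0$.

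Now pick $R = \pi/(2t)$, tile a subrectangle of $(-L_1,L_1)\times(-L_2,L_2)$ by squares of side $2R$, and inscribe a disk in each. Define $\vn\in\mathcal{A}$ by setting it equal to $\vn_{-t}$ (suitably translated) on each of the $N(t) \sim 4L_1L_2 t^2/\pi^2$ resulting cylinders $(\text{disk})\times(0,d)$, and $\vn = 0$ elsewhere. The bulk contribution from the cylinders is $-C_0\,d\,N(t) \sim -4C_0 L_1 L_2 d\,t^2/\pi^2$, while outside $w \equiv 0$. The jump set splits into the lateral cylindrical surfaces (total area $\sim N(t)\cdot 2\pi R\,d = 4L_1L_2 d\,t$), the top and bottom discs where the double twist meets the Dirichlet data (bounded uniformly in $t$), and the parts of $\{z=0,\,d\}$ exterior to the cylinders (area $\leqslant 8L_1L_2$). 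Collecting terms,
\[
I(\vn) \;\leqslant\; -\tfrac{4 C_0 L_1 L_2 d}{\pi^2}\,t^2 \;+\; 4K L_1L_2 d\,t \;+\; C_1 K,
\]
so the $t^2$ saving dominates for large $t$, yielding $I(\vn)<0$ for $t>\bar t$. The main obstacle is the explicit evaluation of the double-twist integral and the verification that it is strictly negative for some admissible radius; once that is in hand, the tiling and boundary-matching estimates are routine, using the fact that jumps in $\mathcal{A}$ need not obey $\vn_+ = -\vn_-$, so discontinuities between a unit vector and $0$ are admissible.
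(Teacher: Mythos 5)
Your proof is correct and follows essentially the same strategy as the paper: a periodic array of double-twist cylinders of radius $\pi/(2t)$ separated by isotropic ($\vn=0$) regions, with the bulk saving of order $t^2$ overwhelming the jump-set cost of order $t$. The only differences are cosmetic (vertical instead of horizontal cylinders, square instead of hexagonal packing), and you helpfully carry out the explicit scale-invariant computation $\int_0^{\pi/2}\bigl(\tfrac{\sin^2 v}{v}-\sin 2v\bigr)\,dv<0$ that the paper merely asserts when it states each cylinder contributes a negative constant $-\alpha$ independent of $t$, as well as the Euler-angle identity $w=\theta'^2+\cos^2\theta(\phi'-t)^2+t^2\sin^2\theta\geqslant 0$ behind the one-dimensional lower bound.
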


\begin{proof}

The state that we construct will not satisfy the boundary conditions anywhere on the horizontal faces. This will result in an energy penalty of $8K L_1L_2$. In the bulk we will piece together a periodic array of double twist cylinders, all lying perpendicular to the $(x,z)$-plane, in a hexagonal close packing system. This method is the most efficient way of close packing circles and would cover just over 90\% of the plane if the pattern was repeated indefinitely. The double twist cylinders that we use in this construction have a radius of $\frac{\pi}{2t}$.

\vspace{3mm}

\begin{figure}[ht]
\centering
\includegraphics[scale=0.3]{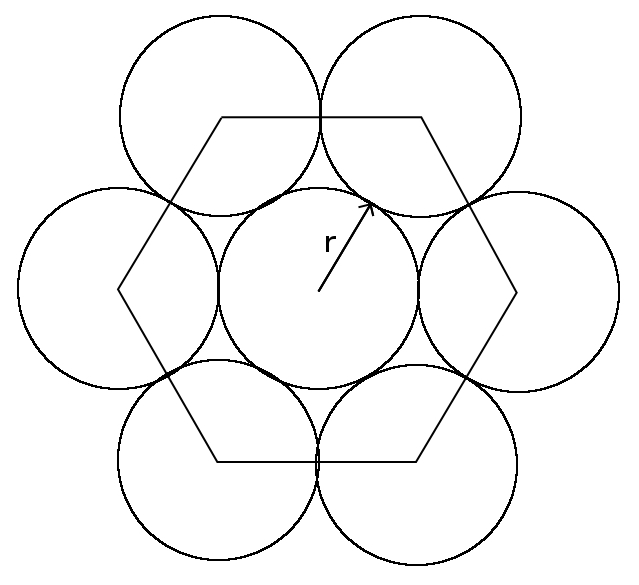}
\caption{Hexagonal Packing: Volume fraction occupied $= \frac{3\pi r^2}{6\sqrt{3} r^2} = 0.907$}
\label{packing}
\end{figure}

Importantly with the domain having width $2L_1$ and height $d$, the number of cylinders used in the pattern, $N_{cyl}$ has the following upper and lower bounds:
\begin{equation}\label{5.80}
 \lfloor 4L_1t \pi^{-1} \rfloor\, \lfloor dt (2\pi)^{-1} \rfloor \leqslant N_{cyl}\leqslant \lfloor 8L_1 d t^2 \pi^{-3} \rfloor.
\end{equation}
Where $\lfloor x\rfloor$ denotes the integer part of a real number. In other words, if $t$ is sufficiently large then there exist constants $A_1$, $A_2$, $A_3$, independent of $t$, such that 
\begin{equation}\label{5.81}
 A_1 t^2 - A_2 t \leqslant N_{cyl} \leqslant A_3 t^2.
\end{equation}
Around each of the cylinders we assign a discontinuity and in between the cylinders we simply set $\vn$ to be zero. Now we estimate the energy of this state by looking at the elastic and jump parts separately. With the upper bound on the number of cylinders we know that 
\begin{equation}\label{5.82}
 \mathcal{H}^2(S_{\vn})\leqslant 8L_1L_2+ (2L_2)\left(2\pi \left(\frac{\pi} {2t}\right)\right)A_3t^2\leqslant A+Bt,
\end{equation}
for some $A,B>0$. The integral of the elastic energy over each cylinder is $-\alpha <0$, a constant independent of $t$. Therefore using the lower bound for $N_{cyl}$ we get
\begin{equation}\label{5.83}
\begin{split}
\int_{\Omega}w(\vn,\nabla\vn)\,dx= &\int_{Cylinders}w(\vn,\nabla\vn)\,dx+\int_{\Omega\backslash Cylinders}w(\vn,\nabla\vn)\,dx\\
=&\int_{Cylinders}w(\vn,\nabla\vn)\,dx\\\leqslant&-\alpha A_1 t^2.\end{split}
\end{equation}
Hence
\begin{equation}\label{5.84}
 I(\vn)\leqslant -\alpha A_1 t^2+K(A+Bt).
\end{equation}
This means that if $t$ is sufficiently large then 
\begin{equation}\label{5.85}
 I(\vn)<0.
\end{equation}
Whereas any function of $\vm = \vm(z) \in \mathcal{A}$, satisfies $w(\vm,\nabla\vm)\geqslant0$ and hence $I(\vm)\geqslant0$.

\end{proof}

\begin{corollary}\label{4: n(z) not local min}
 There exists some $\overline{t}>0$ such that if $t>\overline{t}$ and $\vn = \vn(z) \in \mathcal{A}$ then $\vn$ is not a local minimiser of $I$.
\end{corollary}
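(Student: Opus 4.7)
The plan is to perform a localised version of the double-twist cylinder construction from Proposition \ref{4: cholesteric multidimensional minimisers}, producing an admissible competitor $\vm \in \mathcal{A}$ supported in a small cuboid strictly inside $\Omega$ that lowers the energy. Fix $\vn = \vn(z) \in \mathcal{A}$, and select a compact sub-cuboid $B \subset\subset \Omega$ whose position and dimensions depend only on $\Omega$, not on $\vn$. Set $\vm = \vn$ on $\Omega \setminus B$; inside $B$, let $\vm$ be the hexagonally-packed array of double-twist cylinders of radius $\pi/(2t)$ with axes parallel to ${\bf e}_2$ as constructed in the proof of Proposition \ref{4: cholesteric multidimensional minimisers}, with $\vm = 0$ between cylinders. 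Then $\vm \in \mathcal{A}$ (since $\vm = \vn$ on a neighbourhood of $\partial \Omega$), $\{\vn \neq \vm\} \subset \overline{B} \subset\subset \Omega$, so showing $I(\vm) < I(\vn)$ exhibits $\vn$ as not a local minimiser.

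I will estimate $I(\vm) - I(\vn)$ by rescaling the counting and energy arguments of Proposition \ref{4: cholesteric multidimensional minimisers} to the sub-box $B$. For $t$ sufficiently large the number of cylinders fitting in $B$ is at least of order $|B| t^2$; each contributes an $O(1)$ bulk-energy saving along its length and an $O(1/t)$ lateral surface area. Summing,
\begin{equation*}
 \int_B w(\vm,\nabla \vm)\,dx \leqslant -C_1 |B| t^2, \qquad \mathcal{H}^2(S_\vm \cap B) \leqslant C_2 |B| t + \mathcal{H}^2(\partial B),
\end{equation*}
where the final term accounts for the jump along $\partial B$ where $\vm$ meets $\vn$, and $C_1, C_2 > 0$ depend only on the geometry of $B$ and on the optimal double-twist cylinder energy from Proposition \ref{4: cholesteric multidimensional minimisers}, hence are independent of $\vn$ and $t$.

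For the comparison I need a lower bound on $\int_B w(\vn, \nabla \vn)\,dx$. Because $\vn$ depends only on $z$, a direct computation using $(\nabla \times \vn)_1 = -\vn_2'$, $(\nabla \times \vn)_2 = \vn_1'$, $(\nabla \times \vn)_3 = 0$ yields the pointwise identity
\begin{equation*}
 w(\vn, \nabla \vn) = (\vn_1' + t\vn_2)^2 + (\vn_2' - t\vn_1)^2 + (\vn_3')^2 + t^2 \vn_3^2 \geqslant 0
\end{equation*}
at almost every $x$ (the formula is valid both when $|\vn|=1$ and when $\vn = 0$, in which case $w = 0$). Dropping the non-negative $\mathcal{H}^2(S_\vn \cap B)$ term then gives
\begin{equation*}
 I(\vm) - I(\vn) \leqslant -C_1 |B| t^2 + K C_2 |B| t + K \mathcal{H}^2(\partial B),
\end{equation*}
which is strictly negative for $t > \overline{t}$ with $\overline{t}$ depending only on $C_1, C_2, K, |B|$ and $\mathcal{H}^2(\partial B)$, hence independent of $\vn$.

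The main technical point is ensuring $\overline{t}$ is uniform over all $\vn = \vn(z) \in \mathcal{A}$: this is achieved by fixing the sub-box $B$ in advance, and by exploiting the pointwise bound $w(\vn, \nabla \vn) \geqslant 0$ which removes any $\vn$-dependence from the lower bound on the bulk energy. Compared with the global construction of Proposition \ref{4: cholesteric multidimensional minimisers} the only additional cost here is the fixed boundary jump $\mathcal{H}^2(\partial B)$, easily absorbed by the quadratic bulk saving $-C_1 |B| t^2$ in the large-$t$ limit.
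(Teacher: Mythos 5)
Your proof is correct and follows essentially the same strategy as the paper: replace $\vn$ by a packed array of double-twist cylinders on a region fixed in advance, use the pointwise identity $w(\vn(z),\nabla\vn(z))=(\vn_1'+t\vn_2)^2+(\vn_2'-t\vn_1)^2+(\vn_3')^2+t^2\vn_3^2\geqslant 0$ to remove all $\vn$-dependence from the lower bound, and observe that the quadratic bulk saving beats the linear jump-area cost for large $t$. The only cosmetic difference is that the paper's comparison region $A_\delta$ is shrunk only in $x,y$ and spans the full height (with the transition to $\vn$ occurring across the top/bottom faces and the lateral boundary of $A_\delta$), whereas you use a small box $B\subset\subset\Omega$ with the transition across all of $\partial B$; both variants introduce a $t$-independent extra jump area and yield the same conclusion.
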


\begin{proof}
 
 Take some $\vn =\vn(z) \in \mathcal{A}$. We remind ourselves that the domain in which our problem is set is the extended domain $\Omega_\epsilon$, therefore for some $\delta<<1$ we define
 \begin{equation}\label{5.86}
  A_\delta :=(-L_1+\delta,L_1-\delta)\times (-L_2+\delta,L_2 -\delta) \times \left(-\frac{\epsilon}{2},1+\frac{\epsilon}{2} \right),
 \end{equation}
 so that $A_\delta \subset \subset \Omega_\epsilon$. Take some $\vm \in \mathcal{A}$ such that $\vm = \vn$ on $\Omega \setminus \overline{A_\delta}$. Inside the set $A_\delta\cap \Omega$ we define $\vm$ to be the array of double twist cylinders as shown in Figure \ref{packing}. Then we know that the elastic energy of this configuration has an upper bound of $-\alpha \frac{A_1}{2} t^2$ (see \eqref{5.83}) if $\delta$ is sufficiently small. Hence
 \begin{equation}\label{5.87}
  \int_{A_{\frac{\delta}{2}}} w(\vm,\nabla \vm)\,dx + K\mathcal{H}^2 \left( S_\vm\cap A_{\frac{\delta}{2}} \right) \leqslant -\frac{\alpha A_1 t^2}{2} +  K\mathcal{H}^2 \left( S_\vm\cap A_{\frac{\delta}{2}} \right) \leqslant -\frac{\alpha A_1 t^2}{2}  + K(At+B),
 \end{equation}
 where the constants are independent of $\vn$ and $t$. Therefore if $t$ is sufficiently large we get 
 \begin{equation}\label{5.88}
   \int_{A_{\frac{\delta}{2}}} w(\vm,\nabla \vm)\,dx + K\mathcal{H}^2 \left( S_\vm\cap A_{\frac{\delta}{2}} \right) <  \int_{A_{\frac{\delta}{2}}} w(\vn,\nabla \vn)\,dx + K\mathcal{H}^2 \left( S_\vn\cap A_{\frac{\delta}{2}} \right)
 \end{equation}
 and $\left\{ \vn \neq \vm \right\} \subset\subset \Omega_\epsilon$. Hence $\vn$ is not a local minimiser of $I$.

\end{proof}

These final results are particularly interesting because they finally prove the necessary existence of multidimensional cholesteric patterns. We know that a minimiser of the problem must exist and it cannot be any function of one variable. Furthermore any function of one variable cannot even be a local minimiser. We did not include periodicity in $x$ and $y$ for our admissible functions when studying the nematic problem because it yielded a stronger result. If a function of $z$ is a global minimum without periodicity then it certainly will be with it. 

\vspace{3 mm}

The results in this paper have scratched the surface of what is possible to deduce about liquid crystals using functions of bounded variation. We showed in Section \ref{sec:2,3D results} that using a director theory, director fields of bounded variation are necessary to give finite energy to the defects. In the final sections it was demonstrated that by using this proposed model, the predictions obtained were consistent with the Sobolev space analysis in relatively unfrustrated systems and new in more frustrated settings. However further work is needed in this area to fully determine the advantages and disadvantages of the new model that we have proposed here.

\subsection*{Acknowledgements}

This work was supported by the EPSRC Science and Innovation
award to the Oxford Centre for Nonlinear PDE (EP/E035027/1). The author is supported by CASE studentship with Hewlett-Packard Limited.

\bibliographystyle{plain}
\bibliography{refs} 

\end{document}